\newtheorem{theorem}{Theorem}[section]
\newtheorem{lemma}[theorem]{Lemma}
\newtheorem{proposition}[theorem]{Proposition}
\newtheorem{conjecture}[theorem]{Conjecture}
\theoremstyle{definition}
\newtheorem{definition}[theorem]{Definition}
\newtheorem{remark}[theorem]{Remark}
\numberwithin{equation}{section}
\begin{document}
	
\baselineskip=17pt
	
\title[Hecke algebra trace algorithm]{Hecke algebra trace algorithm and some conjectures on weaving knots}

\author[R. Mishra]{Rama Mishra}
\address{Department of Mathematics\\ Indian Institute of Science Education and Research\\ Dr. Homi Bhabha Road\\ Pashan\\ Pune 411008\\ India}
\email{r.mishra@iiserpune.ac.in}

\author[H. Raundal]{Hitesh Raundal}
\address{Bhaskaracharya Pratishthana\\ 56/14\\ Damle Path\\ Off Law College Road\\ Erandavane\\ Pune 411004\\ India}
\email{hiteshrndl@gmail.com}

\date{}

\begin{abstract}
Computing polynomial invariants for knots and links using braid representations relies heavily on finding the trace of Hecke algebra elements. There is no easy method known for computing the trace and hence it becomes difficult to compute the known polynomial invariants of knots using their braid representations. In this paper, we provide an algorithm to compute the trace of the Hecke algebra representation of any braid. We simplify this algorithm and write a Mathematica program to compute the invariants such as Alexander polynomial, Jones polynomial, HOMFLY-PT polynomial and Khovanov homology of a very special family of knots and links $W(n,m)$ known as weaving knots by expressing them as closure of weaving braids. We also explore on the relationship between the topological and geometric invariants of this family of alternating and hyperbolic knots (links) by generating data for the subfamilies $W(3,m)$, $W(4,m)$, $W(5,m)$ and $W(6,m)$ of weaving knots.
\end{abstract}
	
\subjclass[2020]{Primary 57K10; Secondary 57K14, 57K18}
\keywords{Braid, Hecke algebra, Jones polynomial, Khovanov homology, Twist number, Weaving knot}
	
\maketitle

\section{Introduction}\label{sec1}

Hecke algebras are quotients of group rings of Artin's braid groups \cite{b,bz} and play important role in the study of many branches of mathematics. One major application of Hecke algebras is in defining link invariants (for example, see \cite{hkw,j2}). V. F. R. Jones \cite{j2} used a representation of braid groups into Hecke algebras which he combined with a special trace function on Hecke algebras and showed the existence of a two variable polynomial as a link invariant. This two variable polynomial was first discovered by Freyd and et al \cite{homfly,pt} and it is a universal skein invariant in the sense that all known polynomial invariants could be derived from this only by change of variables. Thus, if one is aiming at computing various polynomial invariants for a given knot or for a given family of knots, it is a good idea to compute this two variable polynomial. Assuming that one knows a nice braid representation for the knot under consideration, the most important ingredient in this computation will be finding the trace of the Hecke algebra element which represents the braid. The trace function on Hecke algebras is defined by certain axioms and can be calculated in several steps. As a general rule, what one would do is that write an element in a Hecke algebra as a linear combination of elements in a suitable basis and find the trace of the basis elements and use the linearity property of the trace function . However, there is no effective way of doing this in general is known.

In \cite{ms}, the authors had worked out a recursive method of computing the trace for the Hecke algebra representation of special $3$-braids whose closure gives an infinite family of hyperbolic knots and links known as weaving knots of type $(3,m)$, denoted by $W(3,m)$. In general, {\it a weaving knot of type $(n,m)$, denoted by $W(n,m)$}, is the knot or link that is the closure of the braid $\left(\sigma_1\sigma_2^{-1}\sigma_3\sigma_4^{-1}\cdots\sigma_{n-1}^\delta\right)^m$, where $\delta=1$ if $n$ is even and $\delta=-1$ if $n$ is odd. For example, the weaving knot $W(4,5)$ is shown in Figure \ref{fig1}. By the definition itself, weaving knots can include links with many components, and throughout this paper, {\it weaving knots will denote both knots and links.} It was simple to compute the trace for the Hecke algebra representation of $3$-braids whose closures are weaving knots $W(3,m)$, since the braid group on $3$-strands is represented in the Hecke algebra $H_3(q)$ which has dimension $3!$ and thus expressing an element as a linear combination of basis elements was easy and could be done by hand. The same technique does not work out in case of weaving knots $W(n,m)$ for $n\geq4$. In this paper, we give an algorithm to express the Hecke algebra representation of a weaving $n$-braid (the closure is a weaving knot $W(n,m)$ for some $m$) as a linear combination of elements in a suitable basis for the Hecke algebra $H_n(q)$. We also write down an algorithm to compute the trace of basis elements of $H_n(q)$ for $n\geq 3$. Combining these two algorithms we obtain an algorithm to compute the trace of the representation of a weaving $n$-braid in $H_n(q)$. We use this algorithm for the trace to compute the two variable polynomial invariant for weaving knots and hence derive the Alexander polynomial \cite{a}, the Jones polynomial \cite{j1} and the HOMFLY-PT polynomial \cite{homfly,pt} for weaving knots. Since weaving knots $W(n,m)$ are alternating knots for $m$ and $n$ coprime, we utilize the results of Lee \cite{l1,l2} and Shumakovitch \cite{s1,s2} to compute the Khovanov homology \cite{k} of these knots. We normalize the Khovanov ranks (the ranks of the Khovanov homology groups) along a line and provide the evidence that the normalized ranks approach to a normal distribution. In some sense this paper extends the results for $W(3,m)$ in \cite{ms} to the general case $W(n,m)$ for all $n$ and $m$.

On the geometry side, William Thurston's seminal result \cite[Corollary 2.5]{t} in the 1980s along with Mostow's rigidity theorem \cite[Theorem 3.1]{t} ensure that most knot complements have a unique structure of a hyperbolic manifold. These two results together establishes a strong connection between hyperbolic geometry and knot theory, since knots are determined by their complements. Many mathematicians are naturally interested in finding out if any inference about some of the geometric invariants (such as hyperbolic volume of the complement) of a knot can be derived from any of its topological invariants (such as Jones polynomial or colored Jones polynomial). The ultimate curiosity in this direction is the validity of the open conjecture known as {\it volume conjecture} \cite{m}. The weaving knots $W(n,m)$ are good candidates for such exploration. Champanerkar, Kofman and Purcell \cite{ckp2} provided asymptotically sharp bounds for the relative volume (i.e. volume divided by crossing number) of these knots (links). They showed that
\begin{equation}
\lim_{n,m\to\infty}\frac{vol\left(S^3\setminus W(n,m)\right)}{c\left(W(n,m)\right)}=v_{oct}
\end{equation}
\noindent and hence according to their work in \cite{ckp1} they conclude that weaving knots are geometrically maximal. Since a weaving knot $W(n,m)$ with $\gcd(n,m)=1$ is an alternating knot, once we have found the Jones polynomial of $W(n,m)$ we can find a bound in terms of the twist number $T(W(n,m))$ of $W(n,m)$ using a result of Dasbach and Lin \cite{dl}. The twist number $T(L)$ of a knot $L$ is nothing but the sum of the modulus of coefficients of $t^{l+1}$ and $t^{h-1}$ in the Jones polynomial of $L$, where $l$ and $h$ are the lowest and the highest degrees of the Jones polynomial of $L$. In the same paper, Dasbach and Lin defined invariants such as $i^\text{th}$ twist number $T_i(L)$ of a knot $L$ to be the sum of the modulus of coefficients of $t^{l+i}$ and $t^{h-i}$ in the Jones polynomial of $L$. With the data provided in their paper, they observed that the twist numbers correlate with the hyperbolic volume of the knot complement. Since we can write the Jones polynomial of a weaving knot $W(n,m)$, we can compute the twist numbers $T_i\left(W(n,m)\right)$ for $i$ within the span of the Jones polynomial, that is, for $1\leq i<\frac{mn}{2}$. We performed experiments on the values of twist numbers $T_i(W(n,m))$ of weaving knots $W(n,m)$ and came up with four set of bounds on the relative volume of these knots (links) that seem to provide better bounds than given in \cite{ckp2}.

The paper is organized as follows: In Section \ref{sec2}, we include the basics on Hecke algebras, its trace and some relevant results. In Section \ref{sec3}, we prove some lemmas and propositions that describe our algorithm to compute the trace of the representation of any braid in a Hecke algebra. Furthermore, we prove a theorem that gives a simplified algorithm for the trace in case of weaving braids. At the end of the section, we include the Mathematica program to compute the trace of the Hecke algebra representation of weaving braids. We also set up notations that are used in describing the two variable polynomial invariant (i.e. the universal skein invariant). In Section \ref{sec4}, we explain how to derive the universal skein invariant for weaving knots from the trace (of the Hecke algebra representation of weaving braids) that is already computable using the algorithm in Section \ref{sec3}. We use the appropriate substitutions to write down the polynomial invariants such as the Alexander polynomial, the Jones polynomial and the HOMFLY-PT polynomial. We also include the additional Mathematica programs to compute these polynomial invariants and add lists of examples of the polynomial invariants for some weaving knots. In Section \ref{sec5}, we discuss the higher twist numbers introduced by Dasbach and Lin \cite{dl}. We provide important observations about higher twist numbers of weaving knots and show that the bounds for the relative volume can be improved from the bounds given in \cite{ckp2}. Section \ref{sec6} discusses a computation of the Khovanov homology of weaving knots $W(n,m)$ for $n$ and $m$ coprime. In this section, we also discuss the distribution of the normalized Khovanov ranks (the normalized ranks of the Khovanov homology groups) along a line. At the end we provide the tables displaying the evidence of our observations that we are going to conjecture in this paper.

\section{Hecke algebras: some observations}\label{sec2}

We review briefly the definition of a Hecke algebra $H_{i+1}(q)$ \cite{b,hkw} over a field $K$ and discuss a suitable basis for $H_{i+1}(q)$ as a vector space over $K$. We prove some lemmas and propositions that are useful in proving our main results.

\begin{definition}
Working over a ground field $K$ containing an element $q\neq0$, the Hecke algebra $H_{i+1}(q)$ is an associative algebra with unity, generated by $T_1,T_2,\ldots,T_i$ and satisfying the following relations:
\begin{enumerate}[(1)]
\item $T_jT_k=T_kT_j$\quad whenever $\left|j-k\right|\geq2$
\item $T_jT_{j+1}T_j =T_{j+1}T_jT_{j+1}$\quad for $1\leq j\leq i-1$, and
\item ${T_j}^2 = (q-1)T_j+q$\quad for $j=1,2,\ldots,i$.
\end{enumerate}
\end{definition}

For $j=1,2,\ldots,i$, recasting the relation ${T_j}^2 = (q-1)T_j+q$ in the form $q^{-1}\left(T_j+(1-q)\right)T_j=1$ shows that $T_j$ is invertible in $H_{i+1}(q)$ with ${T_j}^{-1}=q^{-1}\left(T_j+(1-q)\right)$.

For a positive integer $k$, define $P_k(q)=(-1)^{k-1}\sum_{j=0}^{k-1}(-q)^j$, $P_{-k}(q)=q^{-k}\sum_{j=0}^{k-1}(-q)^j$ and $P_0(q)=0$. For $k\in\mathbb{Z}$, one can check the following:
\begin{enumerate}[(1)]
\item $qP_{k-1}(q)+(q-1)P_k(q)=P_{k+1}(q)$,
\item $P_k(q)=\dfrac{q^k-(-1)^k}{q+1}$ if $q\neq-1$, and
\item $P_k(q)=(-1)^{k-1}k$ if $q=-1$.
\end{enumerate}

\begin{proposition}\label{prop1}
For $j=1,2,\ldots,i$, we have the following relation in $H_{i+1}(q)$:
\begin{equation*}
{T_j}^k=P_k(q)T_j+qP_{k-1}(q)\quad\text{for all}\;\,k\in\mathbb{Z}\,.
\end{equation*}
\end{proposition}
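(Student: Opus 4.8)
The plan is to prove the identity by recognizing that both sides, viewed as sequences indexed by $k\in\mathbb{Z}$, satisfy one and the same linear recurrence, and then to pin them down by matching two consecutive values. First I would record the fundamental three-term recurrence for the powers of $T_j$: starting from the defining relation ${T_j}^2=(q-1)T_j+q$ and multiplying on either side by ${T_j}^{k-1}$ (which is legitimate for every integer $k$ because $T_j$ is invertible with ${T_j}^{-1}=q^{-1}\bigl(T_j+(1-q)\bigr)$), one obtains
\begin{equation*}
{T_j}^{k+1}=(q-1){T_j}^k+q\,{T_j}^{k-1}\qquad\text{for all }k\in\mathbb{Z}.
\end{equation*}

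Next I would introduce the candidate sequence $A_k:=P_k(q)T_j+qP_{k-1}(q)$ and show that it obeys the very same recurrence. Using property (1), namely $P_{k+1}(q)=(q-1)P_k(q)+qP_{k-1}(q)$ (valid for all $k\in\mathbb{Z}$), a direct computation gives
\begin{align*}
(q-1)A_k+qA_{k-1}
&=\bigl[(q-1)P_k(q)+qP_{k-1}(q)\bigr]T_j+q\bigl[(q-1)P_{k-1}(q)+qP_{k-2}(q)\bigr]\\
&=P_{k+1}(q)T_j+qP_k(q)=A_{k+1}.
\end{align*}
Thus $\{{T_j}^k\}$ and $\{A_k\}$ satisfy the identical second-order linear recurrence.

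It then suffices to check two consecutive base cases. For $k=1$ we have $P_1(q)=1$ and $P_0(q)=0$, so $A_1=T_j={T_j}^1$; for $k=0$ we have $P_0(q)=0$ and $P_{-1}(q)=q^{-1}$, so $A_0=q\cdot q^{-1}=1={T_j}^0$. Since the recurrence has the form $x_{k+1}=(q-1)x_k+qx_{k-1}$ with $q\neq0$, it can be run both forward (solving for $x_{k+1}$) and backward (solving $x_{k-1}=q^{-1}\bigl(x_{k+1}-(q-1)x_k\bigr)$); hence agreement of the two sequences at $k=0$ and $k=1$ forces ${T_j}^k=A_k$ for every integer $k$ by induction in both directions.

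The only genuinely delicate point is the extension to negative exponents: here the invertibility of $T_j$ together with $q\neq0$ are exactly what make the recurrence reversible, and one must confirm that property (1) and the closed forms for $P_{-k}(q)$ are consistent with the backward step (in particular the base value $P_{-1}(q)=q^{-1}$, which comes from the definition of $P_{-k}(q)$ at $k=1$). Everything else reduces to the routine verification above, so I expect no real obstacle beyond the bookkeeping for the two-sided induction.
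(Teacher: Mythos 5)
Your proof is correct, and it takes a genuinely different route from the paper's. The paper splits into two separate one-sided inductions: for $k\in\mathbb{Z}^+$ it multiplies ${T_j}^k$ by $T_j$ and invokes the identity $qP_{k-1}(q)+(q-1)P_k(q)=P_{k+1}(q)$, while for $k\in\mathbb{Z}^-$ it multiplies ${T_j}^{-l}$ by ${T_j}^{-1}=q^{-1}\left(T_j+(1-q)\right)$ and invokes the rearranged identity $P_{-l}(q)+(1-q)P_{-l-1}(q)=qP_{-l-2}(q)$, so the negative case requires its own bookkeeping with the inverse formula. You instead isolate the single three-term recurrence $x_{k+1}=(q-1)x_k+qx_{k-1}$, note that both $\{{T_j}^k\}$ (via invertibility of $T_j$) and $\{A_k\}$ (via the paper's property (1), stated for all $k\in\mathbb{Z}$) satisfy it, and then match the two consecutive values $k=0,1$; reversibility of the recurrence (using $q\neq0$) gives both directions at once. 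What your approach buys is uniformity: positive and negative exponents are handled by one argument, only one form of the polynomial identity is needed, and the roles of $q\neq0$ and the invertibility of $T_j$ are made structurally transparent rather than appearing inside a second induction. What the paper's approach buys is that it is entirely self-contained at the level of explicit algebra-element manipulations — each step is a direct expansion inside $H_{i+1}(q)$ — whereas your argument relies on the (easily justified, but worth stating) principle that a two-sided second-order linear recurrence with invertible trailing coefficient is determined by two consecutive terms. Both proofs rest on the same base-case computations $P_1(q)=1$, $P_0(q)=0$, $P_{-1}(q)=q^{-1}$, so the verification burden is essentially identical.
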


\begin{proof}
Note that ${T_j}^0=P_0(q)T_j+qP_{-1}(q)$, since ${T_j}^0=1$, $P_0(q)=0$ and $P_{-1}(q)=q^{-1}$. We now prove the proposition for $k\neq 0$. Let us consider the following two cases: 
\begin{enumerate}[(1)]
\item To prove the proposition for $k\in\mathbb{Z}^+$: We prove the equality ${T_j}^k=P_k(q)T_j+qP_{k-1}(q)$ by induction on $k$. Note that ${T_j}^1=P_1(q)T_j+qP_0(q)$, since $P_1(q)=1$ and $P_0(q)=0$. Assume that ${T_j}^k=P_k(q)T_j+qP_{k-1}(q)$ for $k\geq1$. We show that ${T_j}^{k+1}=P_{k+1}(q)T_j+qP_k(q)$. Using the induction hypothesis, we have the following estimate:
\begingroup
\allowdisplaybreaks
\begin{align*}
{T_j}^{k+1}&={T_j}^kT_j\\
&=\left(P_k(q)T_j+qP_{k-1}(q)\right)T_j\\
&=P_k(q){T_j}^2+qP_{k-1}(q)T_j\\
&=P_k(q)\left((q-1)T_j+q\right)+qP_{k-1}(q)T_j\\
&=\left(qP_{k-1}(q)+(q-1)P_k(q)\right)T_j+qP_k(q)\\
&=P_{k+1}(q)T_j+qP_k(q)\,,
\end{align*}
\endgroup
\noindent since ${T_j}^2=(q-1)T_j+q$ and $qP_{k-1}(q)+(q-1)P_k(q)=P_{k+1}(q)$.
\item To prove the proposition for $k\in\mathbb{Z}^-$: We need to prove that ${T_j}^{-l}=P_{-l}(q)T_j+qP_{-l-1}(q)$ for $l\in\mathbb{Z}^+$. We prove this equality by induction on $l$. Note that ${T_j}^{-1}=P_{-1}(q)T_j+qP_{-2}(q)$, since ${T_j}^{-1}=q^{-1}\left(T_j+(1-q)\right)$, $P_{-1}(q)=q^{-1}$ and $P_{-2}(q)=q^{-2}(1-q)$. Assume that ${T_j}^{-l}=P_{-l}(q)T_j+qP_{-l-1}(q)$ for $l\geq1$. We show that ${T_j}^{-l-1}=P_{-l-1}(q)T_j+qP_{-l-2}(q)$. Using the induction hypothesis, we have the following:
\begingroup
\allowdisplaybreaks
\begin{align*}
{T_j}^{-l-1}&={T_j}^{-l}{T_j}^{-1}\\
&=\left(P_{-l}(q)T_j+qP_{-l-1}(q)\right){T_j}^{-1}\\
&=P_{-l}(q)+qP_{-l-1}(q)q^{-1}\left(T_j+(1-q)\right)\\
&=P_{-l-1}(q)T_j+P_{-l}(q)+(1-q)P_{-l-1}(q)\\
&=P_{-l-1}(q)T_j+qP_{-l-2}(q)\,,
\end{align*}
\endgroup
\noindent since ${T_j}^{-1}=q^{-1}\left(T_j+(1-q)\right)$ and $P_{-l}(q)+(1-q)P_{-l-1}(q)=qP_{-l-2}(q)$.\qedhere
\end{enumerate}
\end{proof}

We quote the following result (the proof can be seen in \cite{hkw}).
\begin{theorem}
Let $z$ be an element in the field $K$. There exists a unique family of $K$-linear trace functions $Tr:H_i(q)\to K$ compatible with the inclusions $H_i(q)\hookrightarrow H_{i+1}(q)$ and satisfying the following properties:
\begin{enumerate}[(1)]
\item $Tr(1)=1$,
\item $Tr(ab)=Tr(ba)$, and
\item $Tr (aT_ib)=z\,Tr(ab)$
\end{enumerate}
for $a,b\in H_i(q)$ and $i\geq1$.
\end{theorem}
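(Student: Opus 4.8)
The plan is to construct the family $\{Tr\}$ by induction on $i$, climbing the tower $K=H_1(q)\subset H_2(q)\subset\cdots$ and reducing everything to how $H_{i+1}(q)$ decomposes over the subalgebra $H_i(q)=\langle T_1,\ldots,T_{i-1}\rangle$. The base case $H_1(q)=K$ forces $Tr(c)=c$, so $Tr(1)=1$ while properties (2) and (3) are vacuous; for the inductive step I assume a unique trace $Tr$ on $H_i(q)$ satisfying the three properties and produce one on $H_{i+1}(q)$.

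The structural input I would establish first is that $H_{i+1}(q)$ is free as a left $H_i(q)$-module on $1,\,T_i,\,T_iT_{i-1},\,\ldots,\,T_iT_{i-1}\cdots T_1$, so that as a vector space
\[
H_{i+1}(q)=H_i(q)\oplus H_i(q)\,T_i\,H_i(q).
\]
This follows from the defining relations — the commutations $T_jT_i=T_iT_j$ for $j\le i-2$ and the braid relation $T_{i-1}T_iT_{i-1}=T_iT_{i-1}T_i$ reduce every word to this normal form, and Proposition \ref{prop1} removes powers of $T_i$ — together with the dimension count $\dim H_{i+1}(q)=(i+1)!=(i+1)\dim H_i(q)$. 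Since $T_i$ commutes with all of $H_{i-1}(q)$, a parallel count shows moreover that the bimodule $H_i(q)\,T_i\,H_i(q)$ is isomorphic to $H_i(q)\otimes_{H_{i-1}(q)}H_i(q)$ via $a\otimes b\mapsto aT_ib$.

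\emph{Uniqueness} is then immediate: writing any $x\in H_{i+1}(q)$ as $x=a+\sum_j b_jT_ic_j$ with $a,b_j,c_j\in H_i(q)$, compatibility gives $Tr(a)$ from the inductive trace and property (3) gives $Tr(b_jT_ic_j)=z\,Tr(b_jc_j)$, so $Tr(x)=Tr(a)+z\sum_j Tr(b_jc_j)$ is forced. For \emph{existence} I would define a conditional expectation $\varepsilon\colon H_{i+1}(q)\to H_i(q)$ as the $H_i(q)$-bimodule map with $\varepsilon|_{H_i(q)}=\mathrm{id}$ and $\varepsilon(aT_ib)=z\,ab$; the bimodule isomorphism above is exactly what makes this well defined, since the only relations among the $aT_ib$ come from pulling $H_{i-1}(q)$ across $T_i$, under which $z\,ab$ stays consistent. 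Setting the new trace equal to $Tr\circ\varepsilon$ (with $Tr$ the trace already built on $H_i(q)$) then bakes in properties (1) and (3) and compatibility by construction.

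The one substantial point — and the step I expect to be the main obstacle — is verifying the trace property (2) for this $Tr$. By bilinearity and an induction on word length it suffices to check $Tr(T_jy)=Tr(yT_j)$ for each generator $T_j$ and each basis element $y$, and I would split this into three cases. For $j\le i-2$ the generator $T_j$ commutes with $T_i$ and lies in $H_i(q)$, so the claim collapses onto the inductive trace property of $Tr$ on $H_i(q)$ via $\varepsilon$; for $j=i$ one uses ${T_i}^2=(q-1)T_i+q$ together with property (3). The delicate case is $j=i-1$, where $T_{i-1}$ neither commutes with $T_i$ nor can be removed, and the braid relation $T_{i-1}T_iT_{i-1}=T_iT_{i-1}T_i$ must be used to move $T_{i-1}$ past the single $T_i$ of the normal form while repeatedly invoking property (3) and the trace property one level down. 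Essentially all of the bookkeeping lives in this braid-relation case, and it is where the precise value $z$ of the Markov parameter and the quadratic relation conspire to make the two sides agree.
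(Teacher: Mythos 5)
The paper itself offers no proof of this statement: it is quoted verbatim from \cite{hkw}, so the only comparison available is with the standard argument in that reference (Ocneanu's trace), which is exactly what your plan reconstructs --- induction up the tower, the decomposition $H_{i+1}(q)=H_i(q)\oplus H_i(q)\,T_i\,H_i(q)$, the bimodule isomorphism $H_i(q)\,T_i\,H_i(q)\cong H_i(q)\otimes_{H_{i-1}(q)}H_i(q)$, uniqueness forced by property (3), and existence via the conditional expectation $\varepsilon$ with the new trace $Tr\circ\varepsilon$. That architecture is sound, and your uniqueness argument and the well-definedness of $\varepsilon$ are correct as stated.

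The gap is in the one step you yourself flag as the crux, the verification of property (2), and your sketch of it misallocates the difficulty, so the plan as written would not execute. Since $\varepsilon$ is by construction an $H_i(q)$-bimodule map and $T_{i-1}\in H_i(q)$, the case $j=i-1$ (indeed every case $j\leq i-1$) is formally immediate: $Tr(\varepsilon(T_jy))=Tr(T_j\,\varepsilon(y))=Tr(\varepsilon(y)\,T_j)=Tr(\varepsilon(yT_j))$, using only the bimodule property and the inductive trace property on $H_i(q)$; no braid relation enters. The genuinely delicate case is $j=i$ applied to elements $y=bT_ic$ that already contain $T_i$: there one must compare $T_i b T_i c$ with $bT_icT_i$, and writing $b=b_0+\sum b_1T_{i-1}b_2$ over $H_{i-1}(q)$ produces both $b_0{T_i}^2$ (quadratic relation) and $b_1T_iT_{i-1}T_ib_2=b_1T_{i-1}T_iT_{i-1}b_2$ (braid relation), after which applying $\varepsilon$ leaves terms such as $z\,Tr\!\left(b_1{T_{i-1}}^2b_2c\right)$ that must be matched against the analogous expansion of $c$ using properties (2) and (3) one level down. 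Your plan assigns to $j=i$ only ``the quadratic relation plus property (3)'', which suffices only when $y\in H_i(q)$, and reserves the braid-relation bookkeeping for $j=i-1$, where none is needed; the computation that constitutes the real content of the theorem is thus both unperformed and mislocated. A minor further point: $z$ is an arbitrary element of $K$, so it is not that ``the precise value'' of $z$ makes the two sides agree --- the identity must, and does, hold identically in $z$.
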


Using the relations in $H_{i+1}(q)$ and the properties of the trace function, we can observe that the trace of an element is a Laurent polynomial in $q$ and $z$. In principle, the properties of the trace function help us to compute the trace of any element in $H_{i+1}(q)$. However, the elements in $H_{i+1}(q)$ can be very complicated. So, the first step should be to express an element as a linear combination of elements in some basis of $H_{i+1}(q)$. Thus, we must throw some light on a nice basis for $H_{i+1}(q)$.

Let us consider the sets as follows:
\begingroup
\allowdisplaybreaks
\begin{align*}
U_1&=\left\{1,\,T_1\right\},\\[5pt]
U_2&=\left\{1,\,T_2,\,T_2T_1\right\},\\
\vdots&\\[5pt]
U_i&=\left\{1,\,T_i,\,T_iT_{i-1},\,T_iT_{i-1}T_{i-2},\,\ldots,\,T_iT_{i-1}\cdots T_1\right\}.
\end{align*}
\endgroup
\noindent The Hecke algebra $H_{i+1}(q)$ (as a vector space over $K$) has a basis
\begin{equation*}
\mathcal{B}_i=\left\{u_1u_2\cdots u_i\in H_{i+1}(q)\mid u_j\in U_j\;\text{for}\;j=1,2,\ldots, i\right\}\,.
\end{equation*}
\noindent This is well known in the theory of Hecke algebras and an elegant proof of this fact can be found in \cite{hkw}. Let $\mathcal{C}_i$ be the subcollection of $\mathcal{B}_i$ defined as follows:
\begin{equation*}
\mathcal{C}_i=\left\{u_1u_2\cdots u_i\in\mathcal{B}_i\mid u_j\in\left\{1,T_j\right\}\;\text{for}\;j=1,2,\ldots, i\right\}\,.
\end{equation*}
\noindent By convention, let $U_0=\mathcal{B}_0=\mathcal{C}_0=\{\,1\,\}$. For $i\geq0$, consider the set $U_i$. In this paper, we use the convention that $1$ is the zeroth element of $U_i$, $T_i$ is the first element, $T_iT_{i-1}$ is the second element, $T_iT_{i-1}T_{i-2}$ is the third element and so on up to $T_iT_{i-1}\cdots T_1$ which is the $i^\text{th}$ element of $U_i$. For $j=0,1,\ldots,i$, let $u_i^j$ denote the $j^{\text{th}}$ element of $U_i$, i.e.
\begin{equation}
u_i^j=
\left\{
\begin{array}{ll}
1 & \mbox{if}\;j=0\,,\\[5pt]
T_iT_{i-1}\cdots T_{i-j+1} & \mbox{if}\;j\neq0\,.
\end{array}
\right.
\end{equation}
\noindent Note that the length of $u_i^j$ (as a word in $T_k$'s) is $j$. By multiplying $T_{i+1}$ on the left of $u_i^j$ gives an element of $U_{i+1}$ of length $j+1$. That is, $u_{i+1}^{j+1}=T_{i+1}u_i^j$ for $0\leq j\leq i$ and $i\geq0$.

Consider the following sets:
\begingroup
\allowdisplaybreaks
\begin{align*}
\mathcal{L}_i&=\left\{\left(l_1,l_2,\ldots,l_i\right)\in\mathbb{Z}^i\mid l_j\in\left\{0,1,,\dots,j\right\}\;\text{for}\;j=1,2,\ldots,i\right\},\\[5pt]
\mathcal{M}_i&=\left\{\left(l_1,l_2,\ldots,l_i\right)\in\mathcal{L}_i\mid l_j\;\text{is either}\;0\;\text{or}\;1\;\text{for}\;j=1,2,\ldots,i\right\}\quad\text{and}\\[5pt]
\mathcal{N}_i&=\left\{\left(l_1,l_2,\ldots,l_i\right)\in\mathcal{M}_i\mid l_j=1\;\text{if}\;j\;\text{is odd}\right\}.
\end{align*}
\endgroup
\noindent For $l=\left(l_1,l_2,\ldots,l_i\right)$ in $\mathcal{L}_i$, let $\beta_i^l$ denote the element $u_1^{l_1}u_2^{l_2}\cdots u_i^{l_i}$ in the basis $\mathcal{B}_i$ of $H_{i+1}(q)$. In other words, $\beta_i^l$ is the product of the ${l_1}^{\!\text{th}}$ element of $U_1$, the ${l_2}^{\!\text{th}}$ element of $U_2$ and so on up to the ${l_i}^{\!\text{th}}$ element of $U_i$. Note that the length of $\beta_i^l$ (as a word in $T_k$'s) is $l_1+l_2+\cdots+l_i$. The basis $\mathcal{B}_i$ for $H_{i+1}(q)$ and its subcollection $\mathcal{C}_i$ can be written as follows:
\begin{equation*}
\mathcal{B}_i=\left\{\beta_i^l\mid l\in\mathcal{L}_i\right\}\qquad\text{and}\qquad\mathcal{C}_i=\left\{\beta_i^m\mid m\in\mathcal{M}_i\right\}.
\end{equation*}
\noindent By convention, let $\mathcal{L}_0=\mathcal{M}_0=\mathcal{N}_0=\left\{\{\;\}\right\}$ and let $\beta_0^l=1$ for $l=\{\:\}$. For $l\in\mathcal{L}_i$ and $0\leq j\leq i+1$, by multiplying $u_{i+1}^j$ on the right of $\beta_i^l$ gives the element $\beta_{i+1}^{l^\prime}\in\mathcal{B}_{i+1}$ for $l^\prime=(l,j)$ in $\mathcal{L}_{i+1}$. Similarly, for $m\in\mathcal{M}_i$ and $0\leq j\leq1$, the product $\beta_i^mu_{i+1}^j$ is the element $\beta_{i+1}^{m^\prime}\in\mathcal{C}_{i+1}$ for $m^\prime=(m,j)$ in $\mathcal{M}_{i+1}$.

\section{Algorithm to find the trace}\label{sec3}

The specification $\rho(\sigma_i) =T_i$, for $i=1,2,\ldots,n$, defines a representation of the group $B_{n+1}$ of braids on $n+1$ strands into the multiplicative monoid of $H_{n+1}(q)$ (see \cite[Section 6]{hkw}), where $\sigma_i$'s are the elementary braids generating $B_{n+1}$. In this section, we provide an algorithm to compute the trace of an element $\rho(\alpha)$ in $H_{n+1}(q)$ for $\alpha\in B_{n+1}$. This requires several steps. We present step by step algorithms to arrive at a final algorithm for computing the trace. We specialize the algorithm to compute the trace of the representation of weaving braids, in which case, it gets simplified.

\begin{lemma}\label{lem3}
For a positive integer $i$, we have the following relations in $H_{i+1}(q)$:
\begin{enumerate}[(1)]
\item $u_i^su_i^{s^\prime}=(q-1)u_{i-1}^{s-1}u_i^{s^\prime}+qu_{i-1}^{s^\prime-1}u_i^{s-1}$\quad if $1\leq s\leq s^\prime\leq i$,\quad and
\item $u_i^su_i^{s^\prime}=u_{i-1}^{s^\prime}u_i^s$\quad if $1\leq s^\prime<s\leq i$.
\end{enumerate}
\end{lemma}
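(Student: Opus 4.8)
\section*{Proof proposal}

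The plan is to prove both identities by one mechanism: \emph{transport} the left factor $u_i^s$ rightward through the word $u_i^{s'}=T_iT_{i-1}\cdots T_{i-s'+1}$, one generator at a time, recording how each letter $T_j$ behaves as it passes $u_i^s$. The engine is an auxiliary commutation lemma describing the single product $u_i^s T_j$ according to where the index $j$ sits relative to the window $\{i-s+1,\ldots,i\}$ of letters appearing in $u_i^s$. Four regimes occur, each following directly from the defining relations (1)--(3) by writing $u_i^s=A\,T_j\,B$ with $A,B$ the letters above and below $T_j$: (i) if $i-s+2\le j\le i$, then commuting the lower letters and one braid move give $u_i^s T_j=T_{j-1}u_i^s$ (the letter shifts down by one and escapes to the left); (ii) if $j=i-s+1$ (the bottom letter of $u_i^s$), the quadratic relation $T_{i-s+1}^2=(q-1)T_{i-s+1}+q$ gives $u_i^s T_{i-s+1}=(q-1)u_i^s+q\,u_i^{s-1}$; (iii) if $j=i-s$, then $u_i^s T_{i-s}=u_i^{s+1}$, simply lengthening the descending word; (iv) if $j\le i-s-1$, then $T_j$ commutes with every letter of $u_i^s$.

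With this lemma, part (2) is immediate: since $s'<s$, every letter of $u_i^{s'}$ has index $\ge i-s'+1\ge i-s+2$, hence lies in regime (i). Transporting $u_i^s$ past all $s'$ of them shifts each index down by one and deposits $T_{i-1}T_{i-2}\cdots T_{i-s'}=u_{i-1}^{s'}$ on the left, yielding exactly $u_{i-1}^{s'}u_i^s$.

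For part (1), transporting $u_i^s$ past the top $s-1$ letters $T_i,\ldots,T_{i-s+2}$ again falls under regime (i) and produces $u_{i-1}^{s-1}u_i^s\,(T_{i-s+1}T_{i-s}\cdots T_{i-s'+1})$. The next letter $T_{i-s+1}$ is the bottom of $u_i^s$, so regime (ii) splits the expression into a $(q-1)$-branch and a $q$-branch. In the $(q-1)$-branch the surviving tail $T_{i-s}\cdots T_{i-s'+1}$ extends $u_i^s$ by regime (iii) up to $u_i^{s'}$, giving $(q-1)u_{i-1}^{s-1}u_i^{s'}$. In the $q$-branch the right factor is $u_i^{s-1}$, whose bottom letter is now $T_{i-s+2}$; the same tail therefore commutes past it by regime (iv) and instead lengthens the left factor $u_{i-1}^{s-1}$ up to $u_{i-1}^{s'-1}$, giving $q\,u_{i-1}^{s'-1}u_i^{s-1}$. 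Summing the branches yields the claimed identity.

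The routine but delicate work is the index bookkeeping inside the auxiliary lemma, particularly checking that in regime (i) the braid relation is applied exactly once and that the shifted letter really commutes out to the left. The genuinely structural point, which I expect to be the crux, is the asymmetry triggered by regime (ii) in part (1): the very same trailing run $T_{i-s}\cdots T_{i-s'+1}$ acts in opposite ways on the two branches, extending the right factor in one and sliding past it to extend the left factor in the other, and this is exactly what produces the two terms with their superscripts interchanged. As an alternative, both parts follow by induction on $i$ from the single reduction $u_i^s u_i^{s'}=T_{i-1}T_i\,u_{i-1}^{s-1}u_{i-1}^{s'-1}$ (valid for $s\ge2$ via one braid move) together with the recombination $T_{i-1}T_i\,u_{i-2}^{a}u_{i-1}^{b}=u_{i-1}^{a+1}u_i^{b+1}$, the base case $s=1$ being a direct application of the quadratic relation.
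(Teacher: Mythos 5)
Your transport argument is correct, and its main route is genuinely different from the paper's proof. The paper proceeds by induction on $i$: for $s\ge 2$ (resp.\ $s'\ge 2$) it rewrites $u_i^su_i^{s'}$ as $T_{i-1}T_i\,u_{i-1}^{s-1}u_{i-1}^{s'-1}$ using one braid move, applies the induction hypothesis to the product one level down, and recombines via the commutation relations; the base cases are the quadratic relation ($i=1$, part (1)) and the braid relation ($i=2$, part (2)). So the ``alternative'' you sketch in your final sentence is essentially the paper's proof, including the recombination identity $T_{i-1}T_i\,u_{i-2}^{a}u_{i-1}^{b}=u_{i-1}^{a+1}u_i^{b+1}$. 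Your primary route instead avoids induction entirely: the four-regime lemma for $u_i^sT_j$ holds in all cases (in regime (i) the incoming $T_j$ commutes past $T_{j-2},\dots,T_{i-s+1}$, one braid move turns $T_jT_{j-1}T_j$ into $T_{j-1}T_jT_{j-1}$, and the new leading $T_{j-1}$ commutes out past $T_i,\dots,T_{j+1}$), and both parts, including the edge cases $s=1$ and $s=s'$ where the trailing run is empty, follow exactly as you describe. What your approach buys: it is a direct computation that makes the origin of each term visible --- the split into $(q-1)$- and $q$-branches comes from the single quadratic relation at the bottom letter of $u_i^s$, and the interchange of superscripts comes from the tail acting oppositely on the two branches, which is indeed the structural crux. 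What the paper's approach buys: the induction hypothesis absorbs all letter-level bookkeeping into a one-line reduction, so the written proof is shorter and needs no auxiliary lemma, and its inductive shape mirrors that of the surrounding results (Proposition \ref{prop3}, Lemma \ref{lem4} and Proposition \ref{prop4}) that consume this lemma.
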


\begin{proof}
We prove the lemma by considering the parts (1) and (2) separately.
\begin{enumerate}[(1)]
\item To prove the first equality: We prove the equality by induction on $i$. For $i=1$, the condition $1\leq s\leq s^\prime\leq i$ becomes $s=s^\prime=1$; thus, by the third relation in $H_2(q)$,
\begin{equation*}
u_1^su_1^{s^\prime}={T_1}^2=(q-1)T_1+q=(q-1)u_0^{s-1}u_1^{s^\prime}+qu_0^{s^\prime-1}u_1^{s-1}.
\end{equation*} 
\noindent The equality holds for $i=1$. Assume the equality for a positive integer $i-1$. We prove the equality for the next integer. Consider a product $u_i^su_i^{s^\prime}$ for $1\leq s\leq s^\prime\leq i$. Depending on $s=1$ or $s\geq2$, we have the following cases:
\begin{enumerate}[(a)]
\item If $s=1$: By the third relation in $H_{i+1}(q)$,
\begingroup
\allowdisplaybreaks
\begin{align*} 
\hskip13mm u_i^su_i^{s^\prime}&=T_iT_iu_{i-1}^{s^\prime-1}=(q-1)T_iu_{i-1}^{s^\prime-1}+qu_{i-1}^{s^\prime-1}=(q-1)u_i^{s^\prime}+qu_{i-1}^{s^\prime-1}\\
&=(q-1)u_{i-1}^{s-1}u_i^{s^\prime}+qu_{i-1}^{s^\prime-1}u_i^{s-1}.
\end{align*}
\endgroup
\item If $s\geq2$: By the first and second relations in $H_{i+1}(q)$,
\begingroup
\allowdisplaybreaks
\begin{align*}
u_i^su_i^{s^\prime}&=T_iT_{i-1}u_{i-2}^{s-2}T_iu_{i-1}^{s^\prime-1}=T_iT_{i-1}T_iu_{i-2}^{s-2}u_{i-1}^{s^\prime-1}\\
&=T_{i-1}T_iT_{i-1}u_{i-2}^{s-2}u_{i-1}^{s^\prime-1}=T_{i-1}T_iu_{i-1}^{s-1}u_{i-1}^{s^\prime-1}.
\end{align*}
\endgroup
\noindent Since $1\leq s-1\leq s^\prime-1\leq i-1$, by the induction hypothesis, $u_{i-1}^{s-1}u_{i-1}^{s^\prime-1}=(q-1)u_{i-2}^{s-2}u_{i-1}^{s^\prime-1}+qu_{i-2}^{s^\prime-2}u_{i-1}^{s-2}$. Using this in the previous equation and thereafter using the first relation in $H_{i+1}(q)$, we get \begingroup
\allowdisplaybreaks
\begin{align*}
u_i^su_i^{s^\prime}&=(q-1)T_{i-1}T_iu_{i-2}^{s-2}u_{i-1}^{s^\prime-1}+qT_{i-1}T_iu_{i-2}^{s^\prime-2}u_{i-1}^{s-2}\\
&=(q-1)T_{i-1}u_{i-2}^{s-2}T_iu_{i-1}^{s^\prime-1}+qT_{i-1}u_{i-2}^{s^\prime-2}T_iu_{i-1}^{s-2}\\
&=(q-1)u_{i-1}^{s-1}u_i^{s^\prime}+qu_{i-1}^{s^\prime-1}u_i^{s-1}.
\end{align*}
\endgroup
\end{enumerate}
\item To prove the second equality: We use the induction on $i$ to prove the equality. The condition $1\leq s^\prime<s\leq i$ is not true for $i=1$. For $i=2$, the condition $1\leq s^\prime<s\leq i$ implies that $s^\prime=1$ and $s=2$; thus, by the second relation in $H_3(q)$, $u_2^su_2^{s^\prime}=T_2T_1T_2=T_1T_2T_1=u_1^{s^\prime}u_2^s$. The equality is true for $i=2$. Assume that the equality holds for $i-1$ for $i\geq3$. We prove the equality for the next integer. Consider a product $u_i^su_i^{s^\prime}$ for $1\leq s^\prime<s\leq i$. We have the following cases:
\begin{enumerate}[(a)]
\item If $s^\prime=1$: By the first and second relations in $H_{i+1}(q)$,
\begin{equation*}
u_i^su_i^{s^\prime}=T_iT_{i-1}u_{i-2}^{s-2}T_i=T_iT_{i-1}T_iu_{i-2}^{s-2}=T_{i-1}T_iT_{i-1}u_{i-2}^{s-2}=u_{i-1}^{s^\prime}u_i^s.
\end{equation*}
\item If $s^\prime\geq2$: By the first and second relations in $H_{i+1}(q)$,
\begingroup
\allowdisplaybreaks
\begin{align*} u_i^su_i^{s^\prime}&=T_iT_{i-1}u_{i-2}^{s-2}T_iu_{i-1}^{s^\prime-1}=T_iT_{i-1}T_iu_{i-2}^{s-2}u_{i-1}^{s^\prime-1}\\
&=T_{i-1}T_iT_{i-1}u_{i-2}^{s-2}u_{i-1}^{s^\prime-1}=T_{i-1}T_iu_{i-1}^{s-1}u_{i-1}^{s^\prime-1}.
\end{align*}
\endgroup
\noindent Since $1\leq s^\prime-1<s-1\leq i-1$, by the induction hypothesis, $u_{i-1}^{s-1}u_{i-1}^{s^\prime-1}=u_{i-2}^{s^\prime-1}u_{i-1}^{s-1}$. Using this in the previous equation and thereafter using the first relation in $H_{i+1}(q)$, we get
\begin{equation*}
u_i^su_i^{s^\prime}=T_{i-1}T_iu_{i-2}^{s^\prime-1}u_{i-1}^{s-1}=T_{i-1}u_{i-2}^{s^\prime-1}T_iu_{i-1}^{s-1}=u_{i-1}^{s^\prime}u_i^s.\qedhere
\end{equation*}
\end{enumerate}
\end{enumerate}
\end{proof}

\begin{proposition}\label{prop3}
Let $i$ be a positive integer, $l\in\mathcal{L}_i$ and $m\in\mathcal{M}_i$. Then there is an algorithm to write the product $\beta_i^m\beta_i^l$ as a linear combination of elements in the basis $\mathcal{B}_i$ of $H_{i+1}(q)$ (i.e. as a linear combination of elements $\beta_i^{l^\prime}$ for $l^\prime\in\mathcal{L}_i$).
\end{proposition}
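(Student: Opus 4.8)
The plan is to induct on $i$, using Lemma \ref{lem3} as the engine for rewriting products of two elements of a single block $U_i$, together with the elementary identity $u_i^{j+1}=T_iu_{i-1}^j$ noted before the statement. The guiding principle is that $\beta_i^m$ and $\beta_i^l$ are words whose factors $u_k^{\,\cdot}$ sit in increasing blocks $U_1,U_2,\ldots,U_i$, so the only genuine interaction when forming the product occurs at the top block $U_i$; everything below it can be folded back into normal form by the inductive hypothesis. Throughout, the hypothesis $m\in\mathcal{M}_i$ (so each $m_k\in\{0,1\}$) is essential, because it forces the top factor $u_i^{m_i}$ of $\beta_i^m$ to be either $1$ or the single generator $T_i$, which commutes cleanly past the lower blocks of $\beta_i^l$.

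First I would isolate an auxiliary normal-form statement: for every $l\in\mathcal{L}_i$ and every $s$ with $0\le s\le i$, the product $\beta_i^lu_i^s$ is a $K$-linear combination of basis elements, computable by an explicit recursion. This is proved by induction on $i$: writing $l=(\hat{l},l_i)$ with $\hat{l}\in\mathcal{L}_{i-1}$, one has $\beta_i^lu_i^s=\beta_{i-1}^{\hat{l}}\,u_i^{l_i}u_i^s$. If $l_i=0$ or $s=0$ the two $U_i$-factors collapse and the result is already a basis element. Otherwise Lemma \ref{lem3} rewrites $u_i^{l_i}u_i^s$ as a combination of terms $u_{i-1}^au_i^b$; for each such term $\beta_{i-1}^{\hat{l}}u_{i-1}^a$ is a product of the same shape one level down, which the inductive hypothesis expands as $\sum_\nu e_\nu\beta_{i-1}^\nu$, and appending $u_i^b$ gives $\sum_\nu e_\nu\beta_i^{(\nu,b)}$.

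With this in hand I would prove the proposition itself by induction on $i$, the base case $i=1$ being the direct computation coming from $T_1^2=(q-1)T_1+q$. For the inductive step write $m=(m',m_i)$ and $l=(l',l_i)$ with $m'\in\mathcal{M}_{i-1}$, $l'\in\mathcal{L}_{i-1}$. When $m_i=0$ we simply have $\beta_i^m\beta_i^l=\beta_{i-1}^{m'}\beta_{i-1}^{l'}\,u_i^{l_i}$, so the inductive hypothesis rewrites $\beta_{i-1}^{m'}\beta_{i-1}^{l'}$ as a combination of $\beta_{i-1}^{\mu}$ and appending $u_i^{l_i}$ finishes. When $m_i=1$ we have $u_i^{m_i}=T_i$, and since $T_i$ commutes with the factors $u_k^{l_k}$ of $\beta_{i-1}^{l'}$ for $k\le i-2$ while $T_iu_{i-1}^{l_{i-1}}=u_i^{l_{i-1}+1}$, we obtain
\begin{equation*}
\beta_i^m\beta_i^l=\beta_{i-1}^{m'}\beta_{i-1}^{\tilde{l}}\,u_i^{l_{i-1}+1}u_i^{l_i},
\end{equation*}
where $\tilde{l}$ denotes $l'$ with its last coordinate replaced by $0$. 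Now the inductive hypothesis expands $\beta_{i-1}^{m'}\beta_{i-1}^{\tilde{l}}=\sum_\mu c_\mu\beta_{i-1}^\mu$, Lemma \ref{lem3} expands $u_i^{l_{i-1}+1}u_i^{l_i}=\sum_k d_ku_{i-1}^{a_k}u_i^{b_k}$, and the auxiliary statement one level down merges each $\beta_{i-1}^\mu u_{i-1}^{a_k}$ back into $\sum_\nu e_\nu\beta_{i-1}^\nu$; appending the surviving $u_i^{b_k}$ produces the desired combination of $\beta_i^{(\nu,b_k)}$.

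I expect the main obstacle to be the $m_i=1$ case, specifically the bookkeeping of commuting $T_i$ to the right and then recombining the leftover $U_{i-1}$-factors created by Lemma \ref{lem3} into normal form, all while checking that every index stays within its permitted range (for instance that $l_{i-1}+1\le i$, which holds since $l_{i-1}\le i-1$, and that the $U_{i-1}$-indices produced by Lemma \ref{lem3} never exceed $i-1$). This is exactly where the auxiliary normal-form statement earns its keep, and where the restriction $m\in\mathcal{M}_i$ is indispensable: for a general left factor $\beta_i^{l''}$ with $l''_i\ge2$ the top factor $u_i^{l''_i}$ would involve $T_{i-1}$ and lower generators that fail to commute past $\beta_{i-1}^{l'}$, and no such clean reduction would be available.
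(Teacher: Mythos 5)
Your proposal is correct, and its skeleton coincides with the paper's: induction on $i$, the reduction $\beta_i^m\beta_i^l=\beta_{i-1}^{m'}u_i^{m_i}\beta_i^l$, the case split on $m_i\in\{0,1\}$, the identity $T_iu_{i-1}^{j}=u_i^{j+1}$, and Lemma \ref{lem3} as the rewriting engine. The one genuine organizational difference lies in how the $m_i=1$ case is finished. The paper keeps the lower product $\beta_{i-1}^{m'}\beta_{i-2}^{l''}$ \emph{unexpanded} and instead refines into three subcases ($l_i=0$; $l_{i-1}<l_i$; $1\leq l_i\leq l_{i-1}$): after Lemma \ref{lem3}, each resulting $u_{i-1}^{a}$ concatenates directly onto $\beta_{i-2}^{l''}$ to form a genuine basis element $\beta_{i-1}^{(l'',a)}$ of $\mathcal{B}_{i-1}$, so every term is literally of the shape $\beta_{i-1}^{m'}\beta_{i-1}^{\lambda}u_i^{s}$ and the inductive hypothesis alone finishes the proof — no merging lemma is needed. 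You instead expand $\beta_{i-1}^{m'}\beta_{i-1}^{\tilde l}$ first via the inductive hypothesis and must then fold the $u_{i-1}^{a_k}$ factors produced by Lemma \ref{lem3} back into normal form; this is exactly why you need your auxiliary statement, which is verbatim the paper's Lemma \ref{lem4} (stated and proved \emph{after} this proposition, independently of it, and used there only for Proposition \ref{prop4}). Since your auxiliary statement is proved by its own induction using only Lemma \ref{lem3}, there is no circularity, and your argument is sound. What each approach buys: yours is more uniform (no subcase analysis on $l_{i-1}$ versus $l_i$) at the price of invoking an extra lemma; the paper's is self-contained modulo Lemma \ref{lem3} and slightly leaner, but requires noticing the precise bookkeeping that makes the three subcases land in the span directly. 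Your closing remark about why $m\in\mathcal{M}_i$ is indispensable — that a top factor $u_i^{l''_i}$ with $l''_i\geq2$ would drag in $T_{i-1}$ and fail to commute past the lower blocks — is accurate and is indeed the reason the paper restricts the left factor to $\mathcal{C}_i$.
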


\begin{proof}
We prove the proposition by induction on $i$. Let $l\in\mathcal{L}_1$ and $m\in\mathcal{M}_1$. Since $\beta_1^l,\beta_1^m\in\{1,T_1\}$, ${T_1}^2=(q-1)T_1+q$ and $\mathcal{B}_1=\{1,T_1\}$, we get $\beta_1^m\beta_1^l\in\text{Span}(\mathcal{B}_1)$. Thus, the proposition is true for $i=1$. Assume that the proposition holds for a positive integer $i-1$. We prove the proposition for the next integer. Consider a product $\beta_i^m\beta_i^l$ for $l=(l_1,l_2,\ldots,l_i)$ in $\mathcal{L}_i$ and $m=(m_1,m_2,\ldots,m_i)$ in $\mathcal{M}_i$. By the induction hypothesis, $\beta_{i-1}^{m^{\prime}}\beta_{i-1}^{l^{\prime}}\in\text{Span}\left(\mathcal{B}_{i-1}\right)$ for $l^{\prime}\in\mathcal{L}_{i-1}$ and $m^{\prime}\in\mathcal{M}_{i-1}$; thus, it is sufficient to prove that
\begin{equation*}
\beta_i^m\beta_i^l\in\text{Span}\Big\{\beta_{i-1}^{m^{\prime}}\beta_{i-1}^{l^{\prime}}u_i^s\mid l^{\prime}\in\mathcal{L}_{i-1},\;m^{\prime}\in\mathcal{M}_{i-1}\;\text{and}\;0\leq s\leq i\Big\}.
\end{equation*} 
\noindent We can write $\beta_i^m\beta_i^l=\beta_{i-1}^{m^\prime}u_i^{m_i}\beta_i^l$, where $m=\left(m^{\prime},m_i\right)$. Depending on $m_i$ is $0$ or $1$, we have the following cases:
\begin{enumerate}[(1)]
\item If $m_i=0$: We get $\beta_i^m\beta_i^l=\beta_{i-1}^{m^\prime}\beta_{i-1}^{l^\prime}u_i^{l_i}$, where $l=\left(l^{\prime},l_i\right)$.
\item If $m_i=1$: By the first relation in $H_{i+1}(q)$,
\begin{equation*} \beta_i^m\beta_i^l=\beta_{i-1}^{m^\prime}T_i\beta_i^l=\beta_{i-1}^{m^\prime}T_i\beta_{i-2}^{l^{\prime\prime}}u_{i-1}^{l_{i-1}}u_i^{l_i}=\beta_{i-1}^{m^\prime}\beta_{i-2}^{l^{\prime\prime}}u_i^{l_{i-1}+1}u_i^{l_i},
\end{equation*}
\noindent where $l=\left(l^{\prime\prime},l_{i-1},l_i\right)$. According to some conditions on $l_{i-1}$ and $l_i$, we have the following cases:
\begin{enumerate}[(a)]
\item If $l_i=0$: We get $\beta_i^m\beta_i^l=\beta_{i-1}^{m^\prime}\beta_{i-2}^{l^{\prime\prime}}u_i^{l_{i-1}+1}=\beta_{i-1}^{m^\prime}\beta_{i-1}^{l^{\prime\prime\prime}}u_i^{l_{i-1}+1}$, where $l^{\prime\prime\prime}=\left(l^{\prime\prime},0\right)$.
\item If $l_{i-1}<l_i$: By Lemma \ref{lem3} (1),
\begingroup
\allowdisplaybreaks
\begin{align*} \beta_i^m\beta_i^l&=(q-1)\beta_{i-1}^{m^\prime}\beta_{i-2}^{l^{\prime\prime}}u_{i-1}^{l_{i-1}}u_i^{l_i}+q\beta_{i-1}^{m^\prime}\beta_{i-2}^{l^{\prime\prime}}u_{i-1}^{l_i-1}u_i^{l_{i-1}}\\
&=(q-1)\beta_{i-1}^{m^\prime}\beta_{i-1}^{l^\prime}u_i^{l_i}+q\beta_{i-1}^{m^\prime}\beta_{i-1}^{l^{\prime\prime\prime}}u_i^{l_{i-1}},
\end{align*}
\endgroup
\noindent where $l^{\prime}=\left(l^{\prime\prime},l_{i-1}\right)$ and $l^{\prime\prime\prime}=\left(l^{\prime\prime},l_i-1\right)$.
\item If $1\leq l_i\leq l_{i-1}$: By Lemma \ref{lem3} (2), $\beta_i^m\beta_i^l=\beta_{i-1}^{m^\prime}\beta_{i-2}^{l^{\prime\prime}}u_{i-1}^{l_i}u_i^{l_{i-1}+1}=\beta_{i-1}^{m^\prime}\beta_{i-1}^{l^{\prime\prime\prime}}u_i^{l_{i-1}+1}$, where $l^{\prime\prime\prime}=\left(l^{\prime\prime},l_i\right)$.\qedhere
\end{enumerate}
\end{enumerate} 
\end{proof}

\begin{lemma}\label{lem4}
Let $i$ be a positive integer, $l\in\mathcal{L}_i$ and $0\leq s\leq i$. Then there is an algorithm to write the product $\beta_i^lu_i^s$ as a linear combination of elements in the basis $\mathcal{B}_i$ of $H_{i+1}(q)$ (i.e. as a linear combination of elements $\beta_i^{l^\prime}$ for $l^\prime\in\mathcal{L}_i$).
\end{lemma}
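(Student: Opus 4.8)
The plan is to prove the statement by induction on $i$, peeling off the last $U_i$-factor of $\beta_i^l$ so that the whole expression collapses to a single product of two elements of $U_i$, which is precisely what Lemma \ref{lem3} is designed to rewrite. Writing $l=(l',l_i)$ with $l'\in\mathcal{L}_{i-1}$, I factor $\beta_i^l=\beta_{i-1}^{l'}u_i^{l_i}$ and hence $\beta_i^lu_i^s=\beta_{i-1}^{l'}\,u_i^{l_i}u_i^s$. If $l_i=0$ or $s=0$ the middle product is trivial and $\beta_i^lu_i^s$ is already the basis element $\beta_i^{(l',s)}$ or $\beta_i^l$. Otherwise $1\leq l_i\leq i$ and $1\leq s\leq i$, and I invoke Lemma \ref{lem3} with its role of $s$ played by $l_i$ and its role of $s'$ played by $s$.

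In the regime $1\leq l_i\leq s$, Lemma \ref{lem3}(1) gives
\[
\beta_i^lu_i^s=(q-1)\bigl(\beta_{i-1}^{l'}u_{i-1}^{l_i-1}\bigr)u_i^s+q\bigl(\beta_{i-1}^{l'}u_{i-1}^{s-1}\bigr)u_i^{l_i-1},
\]
while in the regime $1\leq s<l_i$, Lemma \ref{lem3}(2) gives $\beta_i^lu_i^s=\bigl(\beta_{i-1}^{l'}u_{i-1}^{s}\bigr)u_i^{l_i}$. In each term the bracketed factor is a basis element $\beta_{i-1}^{l'}$ of $H_i(q)$ multiplied on the right by an element $u_{i-1}^{t}$ of $U_{i-1}$ --- that is, exactly an instance of the present lemma one level down. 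The induction hypothesis then expands each bracket as $\sum_{l''}c_{l''}\beta_{i-1}^{l''}$, and reattaching the surviving $U_i$-factor via $\beta_{i-1}^{l''}u_i^{j}=\beta_i^{(l'',j)}$ exhibits $\beta_i^lu_i^s$ as a $K$-linear combination of elements of $\mathcal{B}_i$. The base case $i=1$ is immediate, since $\beta_1^lu_1^s$ is one of $1$, $T_1$, ${T_1}^2$ and ${T_1}^2=(q-1)T_1+q\in\text{Span}(\mathcal{B}_1)$.

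The point that needs the most care is verifying that each bracketed product is a legitimate instance of the lemma at level $i-1$, i.e. that every exponent $t\in\{l_i-1,\,s-1,\,s\}$ appearing on $u_{i-1}^{t}$ lies in $\{0,1,\ldots,i-1\}$. This is forced by the standing constraints: $1\leq l_i\leq i$ yields $0\leq l_i-1\leq i-1$, $1\leq s\leq i$ yields $0\leq s-1\leq i-1$, and in the second regime $s<l_i\leq i$ yields $s\leq i-1$. With these ranges confirmed the recursion is well-founded, and reading the argument as a procedure --- apply Lemma \ref{lem3}, recurse on each bracket, reattach the outer $U_i$-factor, and collect the coefficients $(q-1)$, $q$ and the $c_{l''}$ --- produces the asserted algorithm.
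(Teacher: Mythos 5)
Your proof is correct and follows essentially the same route as the paper's: induction on $i$, factoring $\beta_i^lu_i^s=\beta_{i-1}^{l'}u_i^{l_i}u_i^s$, splitting into the cases $s=0$ or $l_i=0$, $1\leq l_i\leq s$ (Lemma \ref{lem3}(1)), and $s<l_i$ (Lemma \ref{lem3}(2)), then applying the induction hypothesis to the level-$(i-1)$ products $\beta_{i-1}^{l'}u_{i-1}^{t}$ and reattaching the outer $u_i$-factor. Your explicit verification that the exponents $l_i-1$, $s-1$, $s$ lie in $\{0,1,\ldots,i-1\}$ is a welcome detail the paper leaves implicit.
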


\begin{proof}
We prove the lemma by induction on $i$. Let $l\in\mathcal{L}_1$ and $0\leq s\leq1$. Since $u_1^s,\beta_1^l\in\{1,T_1\}$, ${T_1}^2=(q-1)T_1+q$ and $\mathcal{B}_1=\{1,T_1\}$, we get $\beta_1^lu_1^s\in\text{Span}(\mathcal{B}_1)$. The lemma holds for $i=1$. Assume the lemma for a positive integer $i-1$. We prove the lemma for the next integer. Consider a product $\beta_i^lu_i^s$ for $l=\left(l_1,l_2,\ldots,l_i\right)$ in $\mathcal{L}_i$ and $0\leq s\leq i$. We have the following cases:
\begin{enumerate}[(1)]
\item If $s=0$: The product $\beta_i^lu_i^s$ is nothing but $\beta_i^l$.
\item If $s\neq0$: We can write $\beta_i^lu_i^s=\beta_{i-1}^{l^\prime}u_i^{l_i}u_i^s$, where $l=\left(l^\prime,l_i\right)$. Depending on some conditions on $l_i$, we have the following cases:
\begin{enumerate}[(a)]
\item If $l_i=0$: We get $\beta_i^lu_i^s=\beta_{i-1}^{l^\prime}u_i^s=\beta_i^{l^{\prime\prime}}$, where $l^{\prime\prime}=\left(l^\prime,s\right)$.
\item If $1\leq l_i\leq s$: By Lemma \ref{lem3} (1), we get
\begin{equation*}
\beta_i^lu_i^s=(q-1)\beta_{i-1}^{l^\prime}u_{i-1}^{l_i-1}u_i^s+q\beta_{i-1}^{l^\prime}u_{i-1}^{s-1}u_i^{l_i-1}.
\end{equation*}
\noindent Since by the induction hypothesis $\beta_{i-1}^{l^\prime}u_{i-1}^{l_i-1}$ and $\beta_{i-1}^{l^\prime}u_{i-1}^{s-1}$ belong to $\text{Span}(\mathcal{B}_{i-1})$, the product $\beta_i^lu_i^s$ belongs to $\text{Span}(\mathcal{B}_i)$.
\item If $s<l_i$: By Lemma \ref{lem3} (2), we get $\beta_i^lu_i^s=\beta_{i-1}^{l^\prime}u_{i-1}^su_i^{l_i}$. Since by the induction hypothesis $\beta_{i-1}^{l^\prime}u_{i-1}^s$ belongs to $\text{Span}(\mathcal{B}_{i-1})$, the product $\beta_i^lu_i^s$ belongs to $\text{Span}(\mathcal{B}_i)$.\qedhere
\end{enumerate}
\end{enumerate}
\end{proof}

\begin{proposition}\label{prop4}
There is an algorithm to compute the trace of elements in the basis $\mathcal{B}_i$ of $H_{i+1}(q)$ (i.e. the trace of elements $\beta_i^l$ for $l\in\mathcal{L}_i$).
\end{proposition}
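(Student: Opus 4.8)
The plan is to induct on $i$, peeling off the final factor $u_i^{l_i}$ of $\beta_i^l=u_1^{l_1}\cdots u_i^{l_i}$ and exploiting the defining axioms of $Tr$ together with Lemma~\ref{lem4}. Writing $l=(l',l_i)$ with $l'\in\mathcal{L}_{i-1}$, we have $\beta_i^l=\beta_{i-1}^{l'}u_i^{l_i}$, where $\beta_{i-1}^{l'}$ is a word in $T_1,\ldots,T_{i-1}$ and hence lies in $H_i(q)$. The base case $i=0$ is immediate, since $\mathcal{B}_0=\{1\}$ and $Tr(1)=1$ by property (1).

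For the inductive step I assume the trace of every $\beta_{i-1}^{l'}$, $l'\in\mathcal{L}_{i-1}$, is computable. Given $\beta_i^l$ with $l=(l',l_i)$, I distinguish two cases. If $l_i=0$, then $u_i^0=1$ and $\beta_i^l=\beta_{i-1}^{l'}$ lies in $H_i(q)$; by compatibility of $Tr$ with the inclusion $H_i(q)\hookrightarrow H_{i+1}(q)$ its trace equals $Tr(\beta_{i-1}^{l'})$, which is known by the induction hypothesis. If $l_i\geq1$, I use the factorization $u_i^{l_i}=T_iu_{i-1}^{l_i-1}$ recorded in Section~\ref{sec2}, so that
\[
\beta_i^l=\beta_{i-1}^{l'}\,T_i\,u_{i-1}^{l_i-1},
\]
where both $\beta_{i-1}^{l'}$ and $u_{i-1}^{l_i-1}$ are words in $T_1,\ldots,T_{i-1}$ and hence belong to $H_i(q)$. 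Property (3) then yields $Tr(\beta_i^l)=z\,Tr\!\left(\beta_{i-1}^{l'}u_{i-1}^{l_i-1}\right)$. Now $\beta_{i-1}^{l'}u_{i-1}^{l_i-1}$ is precisely a product of the form treated by Lemma~\ref{lem4} at level $i-1$ (note $0\leq l_i-1\leq i-1$), so that lemma rewrites it algorithmically as a $K$-linear combination $\sum_t c_t\,\beta_{i-1}^{l^{(t)}}$ of elements of $\mathcal{B}_{i-1}$. By $K$-linearity of $Tr$ and the induction hypothesis, $Tr\!\left(\beta_{i-1}^{l'}u_{i-1}^{l_i-1}\right)=\sum_t c_t\,Tr\!\left(\beta_{i-1}^{l^{(t)}}\right)$ is computable, and hence so is $Tr(\beta_i^l)$.

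The step that does the real work is the passage from level $i$ to level $i-1$: the single generator $T_i$ occurring in $u_i^{l_i}$ is the only generator of $H_{i+1}(q)$ not already present in $H_i(q)$, so isolating it as $u_i^{l_i}=T_iu_{i-1}^{l_i-1}$ is exactly what lets property (3) strip it off and drop the whole computation into $H_i(q)$. The only subtlety is that after applying property (3) the remaining element $\beta_{i-1}^{l'}u_{i-1}^{l_i-1}$ need not itself be a basis element, which is precisely why Lemma~\ref{lem4} must be invoked first to restore basis form before the induction hypothesis applies. Once that reduction is in place, the recursion terminates and the linearity of $Tr$ finishes the argument, producing the desired algorithm.
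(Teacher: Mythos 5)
Your proposal is correct and follows essentially the same route as the paper's proof: induction on $i$, splitting into the cases $l_i=0$ and $l_i\geq1$, using the factorization $u_i^{l_i}=T_iu_{i-1}^{l_i-1}$ together with trace property (3) to reduce to $z\,Tr\bigl(\beta_{i-1}^{l'}u_{i-1}^{l_i-1}\bigr)$, and then invoking Lemma~\ref{lem4} plus linearity and the induction hypothesis. The only cosmetic difference is your base case $i=0$ versus the paper's $i=1$, which changes nothing of substance.
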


\begin{proof}
We prove the proposition by induction on $i$. Note that $\mathcal{B}_1=\left\{1,T_1\right\}$. By using the properties of the trace, one can see that $Tr(1)=1$ and $Tr(T_1)=z$. Assume the proposition for a positive integer $i-1$. We prove the proposition for the next integer. Consider a basis element $\beta_i^l$ for $l=(l_1,l_2,\ldots,l_i)$ in $\mathcal{L}_i$. We have the following cases:
\begin{enumerate}[(1)]
\item If $l_i=0$: See that $\beta_i^l=\beta_{i-1}^{l^\prime}$, where $l=\left(l^\prime,0\right)$. Using the induction hypothesis, we can compute $Tr\!\left(\beta_{i-1}^{l^\prime}\right)$ which is nothing but $Tr\!\left(\beta_i^l\right)$.
\item If $l_i\neq0$: By the third property of the trace, 
\begin{equation*}
Tr\!\left(\beta_i^l\right)=Tr\!\left(\beta_{i-1}^{l^\prime}T_iu_{i-1}^{l_i-1}\right)=z\,Tr\!\left(\beta_{i-1}^{l^\prime}u_{i-1}^{l_i-1}\right),
\end{equation*}
\noindent where $l=\left(l^\prime,l_i\right)$. By Lemma \ref{lem4}, there is an algorithm to write the product $\beta_{i-1}^{l^\prime}u_{i-1}^{l_i-1}$ as a linear combination of elements in $\mathcal{B}_{i-1}$. By the linearity of the trace and using the induction hypothesis, one can compute $Tr\!\left(\beta_{i-1}^{l^\prime}u_{i-1}^{l_i-1}\right)$ and hence $Tr\!\left(\beta_i^l\right)$.\qedhere
\end{enumerate}
\end{proof}

Recall that $\rho:B_{n+1}\to H_{n+1}(q)$ defined by $\rho(\sigma_i)=T_i$, for $i=1,2,\ldots,n$, is a representation of $B_{n+1}$ into the multiplicative monoid of $H_{n+1}(q)$. Let $\alpha$ be a braid in $B_{n+1}$. If we can provide an algorithm to express $\rho(\alpha)$ as a linear combination of elements in $\mathcal{B}_n$, then Proposition \ref{prop4} (together with the linearity of the trace) can be used to compute the trace of $\rho(\alpha)$. In this direction, we have the following result.

\begin{proposition}
Let $\alpha$ be a braid in $B_{n+1}$. Then there is an algorithm to compute the trace of the element $\rho(\alpha)$ in $H_{n+1}(q)$.
\end{proposition}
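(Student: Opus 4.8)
The plan is to reduce the computation of $Tr(\rho(\alpha))$ to the three algorithmic ingredients already established: Proposition \ref{prop1} to eliminate inverse generators, Proposition \ref{prop3} to collapse positive words back into the basis $\mathcal{B}_n$, and Proposition \ref{prop4} to read off the trace of each basis element. As the paragraph preceding the statement observes, once $\rho(\alpha)$ has been written as a $K$-linear combination of the $\beta_n^l$, the linearity of the trace finishes the job; so the entire content is to produce that linear combination algorithmically.

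First I would fix a word for $\alpha$ in the generators, say $\alpha = \sigma_{j_1}^{e_1}\sigma_{j_2}^{e_2}\cdots\sigma_{j_r}^{e_r}$ with $e_k\in\{\pm1\}$, so that $\rho(\alpha) = T_{j_1}^{e_1}T_{j_2}^{e_2}\cdots T_{j_r}^{e_r}$. Applying Proposition \ref{prop1} with $k=e_k$ replaces each factor $T_{j_k}^{e_k}$ by the $K$-linear combination $P_{e_k}(q)T_{j_k}+qP_{e_k-1}(q)$ of $\{1,T_{j_k}\}$; in particular this removes every negative exponent. Expanding the resulting product by distributivity then writes $\rho(\alpha)$ as a $K$-linear combination of \emph{positive words} $T_{i_1}T_{i_2}\cdots T_{i_s}$, each a product of single generators (some factors having collapsed to $1$).

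The heart of the argument is to express each such positive word in the basis $\mathcal{B}_n$. Here I would exploit that a single generator $T_j$ is itself a basis element lying in $\mathcal{C}_n$: writing $\mu_j\in\mathcal{M}_n$ for the tuple with a $1$ in the $j$-th slot and $0$ elsewhere, one has $T_j = \beta_n^{\mu_j}$. I would then process the word $T_{i_1}\cdots T_{i_s}$ from right to left. The rightmost factor $T_{i_s}=\beta_n^{\mu_{i_s}}$ is already a basis element; and if $T_{i_k}\cdots T_{i_s}$ has been expanded as $\sum_l c_l\,\beta_n^l$ with $l\in\mathcal{L}_n$, then left-multiplying by $T_{i_{k-1}}=\beta_n^{\mu_{i_{k-1}}}$, whose index $\mu_{i_{k-1}}$ lies in $\mathcal{M}_n$, gives $\sum_l c_l\,\beta_n^{\mu_{i_{k-1}}}\beta_n^{l}$, and Proposition \ref{prop3} rewrites each $\beta_n^{\mu_{i_{k-1}}}\beta_n^{l}$ as a linear combination of basis elements. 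Iterating down to $k=1$ yields $T_{i_1}\cdots T_{i_s}$, and hence, summing over all the positive words, $\rho(\alpha)$, as a $K$-linear combination of the $\beta_n^l$.

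Finally I would invoke Proposition \ref{prop4} to compute $Tr(\beta_n^l)$ for each basis element that appears, and combine these by the linearity of $Tr$ to obtain $Tr(\rho(\alpha))$. The step I expect to require the most care is the compatibility that makes Proposition \ref{prop3} applicable at every multiplication: the crucial point is that each generator $T_j$, used as the left factor, has index $\mu_j\in\mathcal{M}_n$, which is exactly the hypothesis under which Proposition \ref{prop3} collapses the product back into $\mathcal{B}_n$. Left-multiplication, rather than right-multiplication, is what keeps us inside the scope of that proposition, so I would traverse the word in this direction deliberately; the remaining points are bookkeeping and the routine fact that all coefficients produced are Laurent polynomials in $q$ and $z$.
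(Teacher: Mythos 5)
Your proposal is correct, and it assembles exactly the same three ingredients as the paper (Proposition \ref{prop1} to remove inverse generators, Proposition \ref{prop3} to collapse products with a left factor from $\mathcal{C}_n$, Proposition \ref{prop4} plus linearity to finish), but it organizes the reduction differently. The paper first decomposes the braid word into maximal \emph{ascending} runs $\sigma_1^{\,k_1}\sigma_2^{\,k_2}\cdots\sigma_n^{\,k_n}$ (padding with zero exponents where needed); the payoff is that after applying Proposition \ref{prop1} to each power, every term of the expanded run is automatically a product $u_1u_2\cdots u_n$ with $u_j\in\{1,T_j\}$ \emph{in increasing order of subscripts}, hence an element of $\mathcal{C}_n$ with no rewriting required, so Proposition \ref{prop3} is invoked only once per run boundary. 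You instead work letter by letter: each single generator $T_j=\beta_n^{\mu_j}$ is itself an element of $\mathcal{C}_n$, so every left multiplication falls under Proposition \ref{prop3}, at the cost of one invocation per letter of the word. Your route buys a cleaner statement --- no run decomposition, no padding by zero powers, no handling of arbitrary integer exponents (though Proposition \ref{prop1} would let you absorb those too, shortening your words) --- while the paper's route batches the rewriting so that the combinatorially expensive step (Proposition \ref{prop3}) is applied fewer times. Both are valid algorithms, and your observation that left-multiplication (rather than right-multiplication) is what keeps every step inside the hypotheses of Proposition \ref{prop3} is precisely the point that makes the paper's right-to-left induction on the runs work as well.
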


\begin{proof}
Note that $\text{Tr}(\rho(\alpha))=1$ if $\alpha$ is the trivial braid (a braid with no crossings in its diagram). Let $\alpha$ be a nontrivial braid and let it be given by $\sigma_{i_1}^{\,\,j_1}\sigma_{i_2}^{\,\,j_2}\cdots\sigma_{i_p}^{\,\,j_p}$ (for $p\geq1$, $i_1,i_2,\ldots,i_p\in\{1,2,\ldots,n\}$ and $j_1,j_2,\ldots,j_p\in\mathbb{Z}$) as a product of powers of generators $\sigma_1,\sigma_2,\ldots,\sigma_n$ of $B_{n+1}$. We can assume that $i_{k-1}\neq i_k$ for $2\leq k\leq p$ and $j_k\neq0$ for $1\leq k\leq p$. Let $2\leq p_1<p_2<\cdots<p_r\leq p$ be the integers such that $i_{k-1}>i_k$ if $k\in\{p_1,p_2,\ldots,p_r\}$ and $i_{k-1}<i_k$ if $k\notin\{p_1,p_2,\ldots,p_r\}$. One can write
\begin{equation}\label{eq1}
\alpha=\left(\sigma_{i_1}^{\,\,j_1}\sigma_{i_2}^{\,\,j_2}\cdots\sigma_{i_{p_1-1}}^{\,\,j_{p_1-1}}\right)\left(\sigma_{i_{p_1}}^{\,\,j_{p_1}}\sigma_{i_{p_1+1}}^{\,\,j_{p_1+1}}\cdots\sigma_{i_{p_2-1}}^{\,\,j_{p_2-1}}\right)\cdots\left(\sigma_{i_{p_r}}^{\,\,j_{p_r}}\sigma_{i_{p_r+1}}^{\,\,j_{p_r+1}}\cdots\sigma_{i_p}^{\,\,j_p}\right)
\end{equation}
\noindent In \eqref{eq1}, there are $r+1$ parentheses. The subscript of $\sigma$ increases as one goes from left to right within a parenthesis. The subscript decreases as one jumps from a parenthesis into the next parenthesis. One can put zero powers of $\sigma_1,\sigma_2,\ldots,\sigma_n$ wherever necessary to make each parenthesis look like $\sigma_1^{\,\,k_1}\sigma_2^{\,\,k_2}\cdots\sigma_n^{\,\,k_n}$ for some integers $k_1,k_1,\ldots,k_n$. Thus \eqref{eq1} takes the form
\begin{equation}
\alpha=\left(\sigma_1^{\,\,k_{01}}\sigma_2^{\,\,k_{02}}\cdots\sigma_n^{\,\,k_{0n}}\right)\left(\sigma_1^{\,\,k_{11}}\sigma_2^{\,\,k_{12}}\cdots\sigma_n^{\,\,k_{1n}}\right)\cdots\left(\sigma_1^{\,\,k_{r1}}\sigma_2^{\,\,k_{r2}}\cdots\sigma_n^{\,\,k_{rn}}\right)
\end{equation}
\noindent where $k_{st}\in\mathbb{Z}$ for $0\leq s\leq r$ and $1\leq t\leq n$. Since $\rho$ is a homomorphism, we get
\begin{equation}\label{eq2}
\rho(\alpha)=\left(T_1^{\,\,k_{01}}T_2^{\,\,k_{02}}\cdots T_n^{\,\,k_{0n}}\right)\left(T_1^{\,\,k_{11}}T_2^{\,\,k_{12}}\cdots T_n^{\,\,k_{1n}}\right)\cdots\left(T_1^{\,\,k_{r1}}T_2^{\,\,k_{r2}}\cdots T_n^{\,\,k_{rn}}\right)
\end{equation}
\noindent By Proposition \ref{prop1}, each parenthesis in \eqref{eq2} can be written as a linear combination of elements that belong to $\mathcal{C}_n$. In other words, $T_1^{\,\,k_{s1}}T_2^{\,\,k_{s2}}\cdots T_n^{\,\,k_{sn}}=\sum_{m\in\mathcal{M}_n}Q_m^s(q)\beta_n^m$ for $0\leq s\leq r$, where $Q_m^s$ (for $0\leq s\leq r$ and $m\in\mathcal{M}_n$) is a Laurent polynomial in $q$. Using this in \eqref{eq2}, we get
\begingroup
\allowdisplaybreaks
\begin{align}
\rho(\alpha)&=\left(\sum_{m_0\in\mathcal{M}_n}Q_{m_0}^0(q)\beta_n^{m_0}\right)\left(\sum_{m_1\in\mathcal{M}_n}Q_{m_1}^1(q)\beta_n^{m_1}\right)\cdots\left(\sum_{m_r\in\mathcal{M}_n}Q_{m_r}^r(q)\beta_n^{m_r}\right)\\[2mm]
&=\sum_{m_0\in\mathcal{M}_n}\sum_{m_1\in\mathcal{M}_n}\cdots\sum_{m_r\in\mathcal{M}_n}Q_{m_0}^0(q)Q_{m_1}^1(q)\cdots Q_{m_r}^r(q)\beta_n^{m_0}\beta_n^{m_1}\cdots\beta_n^{m_r}\label{eq3}
\end{align}
\endgroup
\noindent For $m_0,m_1,\ldots,m_r\in\mathcal{M}_n$, using Proposition \ref{prop3} together with the induction on $r$, the product $\beta_n^{m_0}\beta_n^{m_1}\cdots\beta_n^{m_r}$ can be written as a linear combination of elements in the basis $\mathcal{B}_n$ of $H_{n+1}(q)$. Using this in \eqref{eq3}, we can express $\rho(\alpha)$ as a linear combination of basis elements. Since there is an algorithm (by Proposition \ref{prop4}) to compute the trace of the basis elements, we can compute the trace of $\rho(\alpha)$ by using the linearity of the trace.
\end{proof}

In this paper, our interest is in a special class of braids known as weaving braids. For a pair $(N,M)$ of positive integers, a {\it weaving braid} of type $(N,M)$, denoted by $\sigma_{N,M}$, is the braid $\left(\sigma_1\sigma_2^{-1}\sigma_3\sigma_4^{-1}\cdots\sigma_{N-1}^\delta\right)^m$, where $\delta=1$ if $N$ is even and $\delta=-1$ if $N$ is odd. The closure of the braid $\sigma_{N,M}$ is the knot or link $W(N,M)$ known as a {\it weaving knot} of type $(N,M)$ having the components equal to $\gcd(N,M)$. A picture of $W(4,5)$ is shown in Figure \ref{fig1}. The weaving knots are alternating and they form an interesting family of hyperbolic knots and links that have been studied earlier (for example, see \cite{ckp1,ckp2,ms}).

\begin{figure}[H]
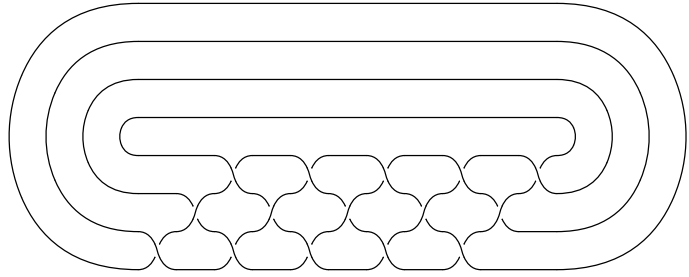

\begin{center}
\begin{equation*}
\xygraph{ !{0;/r1.2pc/:}
!{\hcap[1]}[u]
!{\hcap[3]}[u]
!{\hcap[5]}[u]
!{\hcap[7]}[lllllllllll]
!{\xcaph[-11]@(0)}[dl]
!{\xcaph[-11]@(0)}[dl]
!{\xcaph[-11]@(0)}[dl]
!{\xcaph[-11]@(0)}[uuul]
!{\hcap[-7]}[d]
!{\hcap[-5]}[d]
!{\hcap[-3]}[d]
!{\hcap[-1]}[d]
!{\xcaph[2]@(0)}[dl]
!{\xcaph[1]@(0)}[dl]
!{\htwist}[d]
!{\xcaph[1]@(0)}[uul]
!{\htwistneg}[u]
!{\htwist}[ddl]
!{\htwist}[d]
!{\xcaph[1]@(0)}[uul]
!{\htwistneg}[ul]
!{\xcaph[1]@(0)}
!{\htwist}[ddl]
!{\htwist}[d]
!{\xcaph[1]@(0)}[uul]
!{\htwistneg}[ul]
!{\xcaph[1]@(0)}
!{\htwist}[ddl]
!{\htwist}[d]
!{\xcaph[1]@(0)}[uul]
!{\htwistneg}[ul]
!{\xcaph[1]@(0)}
!{\htwist}[ddl]
!{\htwist}[d]
!{\xcaph[2]@(0)}[uul]
!{\htwistneg}[ul]
!{\xcaph[1]@(0)}
!{\htwist}[ddl]
!{\xcaph[1]@(0)}}
\end{equation*}\vspace{-38mm}
\caption{The weaving knot $W(4,5)$}
\label{fig1}
\end{center}
\end{figure}

We prefer to write $N=n+1$ and $M=m$, and study $W(n+1,m)$.

\begin{theorem}\label{thm1}
Let $\sigma_{n+1,m}$ be the weaving braid $\left(\sigma_1\sigma_2^{-1}\sigma_3\sigma_4^{-1}\cdots\sigma_n^\delta\right)^m$, where $\delta=1$ if $n$ is odd and $\delta=-1$ if $n$ is even. Then there is an algorithm to compute the trace of the element $\rho\left(\sigma_{n+1,m}\right)$ in $H_{n+1}(q)$.
\end{theorem}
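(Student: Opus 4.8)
The plan is to use that $\rho$ is a homomorphism, so that with $g=\sigma_1\sigma_2^{-1}\sigma_3\sigma_4^{-1}\cdots\sigma_n^\delta$ we have $\rho(\sigma_{n+1,m})=\rho(g)^m$ and $\rho(g)=T_1T_2^{-1}T_3T_4^{-1}\cdots T_n^\delta$. Unlike a general braid, this word is already a single ascending run in which every generator occurs exactly once, so the first and usually most delicate step of the general algorithm --- regrouping into ascending parentheses $\sigma_1^{k_1}\cdots\sigma_n^{k_n}$ and inserting zero powers --- is unnecessary. First I would expand $\rho(g)$ itself as a linear combination of basis elements.

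For this expansion I would apply Proposition \ref{prop1} factor by factor. Since every exponent is $\pm1$, each factor is a $K$-linear combination of $1$ and $T_j$: an odd-indexed factor reduces to $T_j$ (as $P_1(q)=1$ and $P_0(q)=0$), while an even-indexed factor reduces to $q^{-1}T_j+(q^{-1}-1)$. Because the subscripts increase strictly from left to right and each $T_j$ appears once, multiplying these factors out produces only monomials $T_{j_1}T_{j_2}\cdots T_{j_s}$ with $j_1<j_2<\cdots<j_s$, and each such monomial is exactly the basis element $\beta_n^\mu$ whose indicator tuple $\mu$ records the chosen subscripts. Since the odd-indexed factors force their $T_j$, the tuple $\mu$ is constrained to have $\mu_j=1$ for all odd $j$; that is, $\mu$ ranges precisely over $\mathcal{N}_n$. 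Hence $\rho(g)=\sum_{\mu\in\mathcal{N}_n}Q_\mu(q)\beta_n^\mu$, where each coefficient $Q_\mu(q)$ is the corresponding product of $q^{-1}$'s and $(q^{-1}-1)$'s. This explains the role of the set $\mathcal{N}_n$ introduced in Section \ref{sec2}.

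With $\rho(g)$ in hand I would raise it to the $m$-th power, obtaining
\begin{equation*}
\rho(\sigma_{n+1,m})=\sum_{\mu_1,\ldots,\mu_m\in\mathcal{N}_n}Q_{\mu_1}(q)\cdots Q_{\mu_m}(q)\,\beta_n^{\mu_1}\beta_n^{\mu_2}\cdots\beta_n^{\mu_m}.
\end{equation*}
Each $\mu_i$ lies in $\mathcal{N}_n\subseteq\mathcal{M}_n$, so each $\beta_n^{\mu_i}$ belongs to $\mathcal{C}_n$. I would then collapse the products $\beta_n^{\mu_1}\cdots\beta_n^{\mu_m}$ to the basis by iterating Proposition \ref{prop3} from the right: at each stage the left factor lies in $\mathcal{C}_n$ and the already-reduced right part lies in $\text{Span}(\mathcal{B}_n)$, which is exactly the input Proposition \ref{prop3} accepts, so an induction on $m$ together with the linearity of the trace rewrites every product as a linear combination of basis elements $\beta_n^l$, $l\in\mathcal{L}_n$. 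Finally Proposition \ref{prop4} computes $Tr(\beta_n^l)$ for each such $l$, and linearity assembles $Tr(\rho(\sigma_{n+1,m}))$.

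The main obstacle is not existence --- that is already guaranteed by the general Proposition preceding this theorem --- but making the procedure genuinely simple and finite. The simplification comes from the clean first step: the expansion of $\rho(g)$ lands directly and only in $\mathcal{N}_n$, a set of size $2^{\lfloor n/2\rfloor}$ rather than the $2^n$ of $\mathcal{M}_n$, so only the even-indexed generators contribute free binary choices. The remaining cost is the repeated application of Proposition \ref{prop3} to the $m$-fold product; the real content is organizing this iterated reduction as a terminating algorithm (by induction on $m$) and keeping the bookkeeping of coefficients manageable, which is precisely what working with the small index set $\mathcal{N}_n$ achieves.
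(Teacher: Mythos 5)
Your proposal is correct and takes essentially the same route as the paper's proof: expand $\rho\left(\sigma_{n+1,1}\right)$ via Proposition \ref{prop1} into a linear combination of basis elements indexed exactly by $\mathcal{N}_n$, reduce the $m$-fold product by iterating Proposition \ref{prop3} (left factor in $\mathcal{C}_n$, right part already in $\text{Span}(\mathcal{B}_n)$) with induction on $m$, and finish with Proposition \ref{prop4} and linearity of the trace. The only cosmetic difference is that the paper factors out $q^{-mr}$ and records the iteration as an explicit recursion for the coefficient polynomials $f_l^m$, whereas you keep the coefficients as products of $q^{-1}$ and $(q^{-1}-1)$.
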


\begin{proof}
Let $d=\frac{1-(-1)^n}{2}$ and $r=\frac{n-d}{2}$\,. There are exactly $r$ number of $\sigma_i$'s in $\sigma_1\sigma_2^{-1}\sigma_3\sigma_4^{-1}\cdots\sigma_n^\delta$ with power $-1$. Since ${T_i}^{-1}=q^{-1}\left(T_i+(1-q)\right)$ for $i=1,2,\ldots,n$, we have the following:
\begingroup
\allowdisplaybreaks
\begin{align}
\rho\left(\sigma_{n+1,1}\right)&=T_1T_2^{\,-1}T_3T_4^{\,-1}\cdots T_n^{\,\delta}\\
&=\left\{
\begin{array}{ll}
q^{-r}T_1\left(T_2+(1-q)\right)\cdots T_{n-1}\left(T_n+(1-q)\right) & \mbox{if}\;n\;\mbox{is even}\\[5pt]
q^{-r}T_1\left(T_2+(1-q)\right)\cdots \left(T_{n-1}+(1-q)\right)T_n & \mbox{if}\;n\;\mbox{is odd}
\end{array}\right.\\
&=q^{-r}\sum_{l\in\mathcal{N}_n}(1-q)^{n-\sum_sl_s}\beta_n^l\label{eq4}
\end{align}
\endgroup
\noindent Here, we recall that the length of $\beta_n^l$ (as a word in $T_k$'s) is $\sum_sl_s$ for $l=(l_1,l_2,\ldots,l_n)$ in $\mathcal{N}_n$. We can write \eqref{eq4} as follows:
\begin{equation}\label{eq5}
\rho\left(\sigma_{n+1,1}\right)=q^{-r}\sum_{l\in\mathcal{L}_n}f_l^1(q)\beta_n^l
\end{equation}
\noindent where
\begin{equation}\label{eq6}
f_l^1(q) =
\left\{
\begin{array}{ll}
(1-q)^{n-\sum_sl_s} & \mbox{if}\;l\in\mathcal{N}_n\\[5pt]
0 & \mbox{if}\;l\notin\mathcal{N}_n
\end{array}
\right.
\end{equation}
\noindent For $m\geq2$, assume that
\begin{equation}\label{eq24}
\rho\left(\sigma_{n+1,m-1}\right)=q^{-(m-1)r}\sum_{l\in\mathcal{L}_n}f_l^{m-1}(q)\beta_n^l
\end{equation}
\noindent for some polynomials $f_l^{m-1}$ in $q$. Let $i\in\mathcal{L}_n$ and $j\in\mathcal{N}_n$. Since $\mathcal{N}_n\subseteq \mathcal{M}_n$, $j\in\mathcal{M}_n$. Using the algorithm given in Proposition \ref{prop3} to write $\beta_n^j\beta_n^i$ as a linear combination of basis elements, we can find the polynomials $h_l^{ji}$ (for $l\in\mathcal{L}_n$) such that $\beta_n^j\beta_n^i=\sum_{l\in\mathcal{L}_n}h_l^{ji}(q)\beta_n^l$. By \eqref{eq4} and \eqref{eq24}, we have
\begingroup
\allowdisplaybreaks
\begin{align}	
\rho\left(\sigma_{n+1,m}\right)&=\rho\left(\sigma_{n+1,1}\right)\rho\left(\sigma_{n+1,m-1}\right)\\
&=q^{-mr}\left(\sum_{j\in\mathcal{N}_n}(1-q)^{n-\sum_sj_s}\beta_n^j\right)\left(\sum_{i\in\mathcal{L}_n}f_i^{m-1}(q)\beta_n^i\right)\\
&=q^{-mr}\sum_{i\in\mathcal{L}_n}\sum_{j\in\mathcal{N}_n}(1-q)^{n-\sum_sj_s}f_i^{m-1}(q)\beta_n^j\beta_n^i\\
&=q^{-mr}\sum_{i\in\mathcal{L}_n}\sum_{j\in\mathcal{N}_n}(1-q)^{n-\sum_sj_s}f_i^{m-1}(q)\sum_{l\in\mathcal{L}_n}h_l^{ji}(q)\beta_n^l\\
&=q^{-mr}\sum_{l\in\mathcal{L}_n}\left(\sum_{i\in\mathcal{L}_n}\sum_{j\in\mathcal{N}_n}(1-q)^{n-\sum_sj_s}f_i^{m-1}(q)h_l^{ji}(q)\right)\beta_n^l\\
&=q^{-mr}\sum_{l\in\mathcal{L}_n}f_l^m(q)\beta_n^l\label{eq7}
\end{align}
\endgroup
\noindent where
\begin{equation}\label{eq8}
f_l^m(q)=\sum_{i\in\mathcal{L}_n}\sum_{j\in\mathcal{N}_n}(1-q)^{n-\sum_sj_s}f_i^{m-1}(q)h_l^{ji}(q)
\end{equation}

\noindent Note that, using \eqref{eq8}, the polynomials $f_l^m$ (for $l\in\mathcal{L}_n$ and $m\geq2$) can be computed recursively. Since the trace function $Tr$ is linear and we have an algorithm (by Proposition \ref{prop4}) to compute the trace of basis elements, we can compute $Tr\left(\rho\left(\sigma_{n+1,m}\right)\right)$ using \eqref{eq7} if $m\geq2$ (or using \eqref{eq4} if $m=1$). This completes the proof of the theorem.
\end{proof}

We have written a Mathematica program to compute the trace of elements $\rho\left(\sigma_{n+1,m}\right)$ in $H_{n+1}(q)$ corresponding to weaving braids $\sigma_{n+1,m}$. The program as a Mathematica Notebook (.nb file) is available at \cite{mr}. We include the PDF file of the program in the next two pages.\vskip3mm

\noindent\hspace{-3pt}\includegraphics[width=1.01\linewidth]{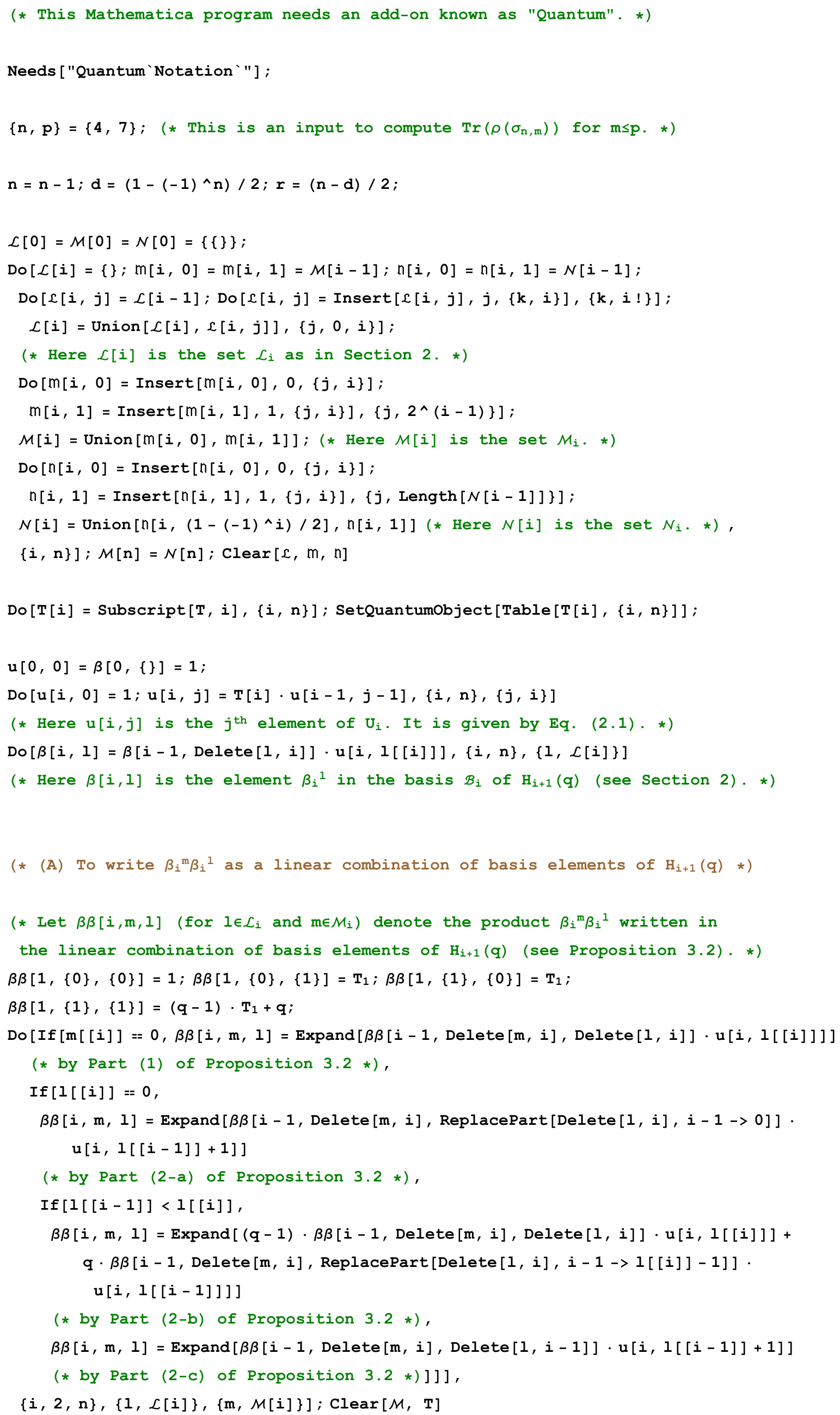}\vskip3mm
\noindent\hspace{-3pt}\includegraphics[width=1.01\linewidth]{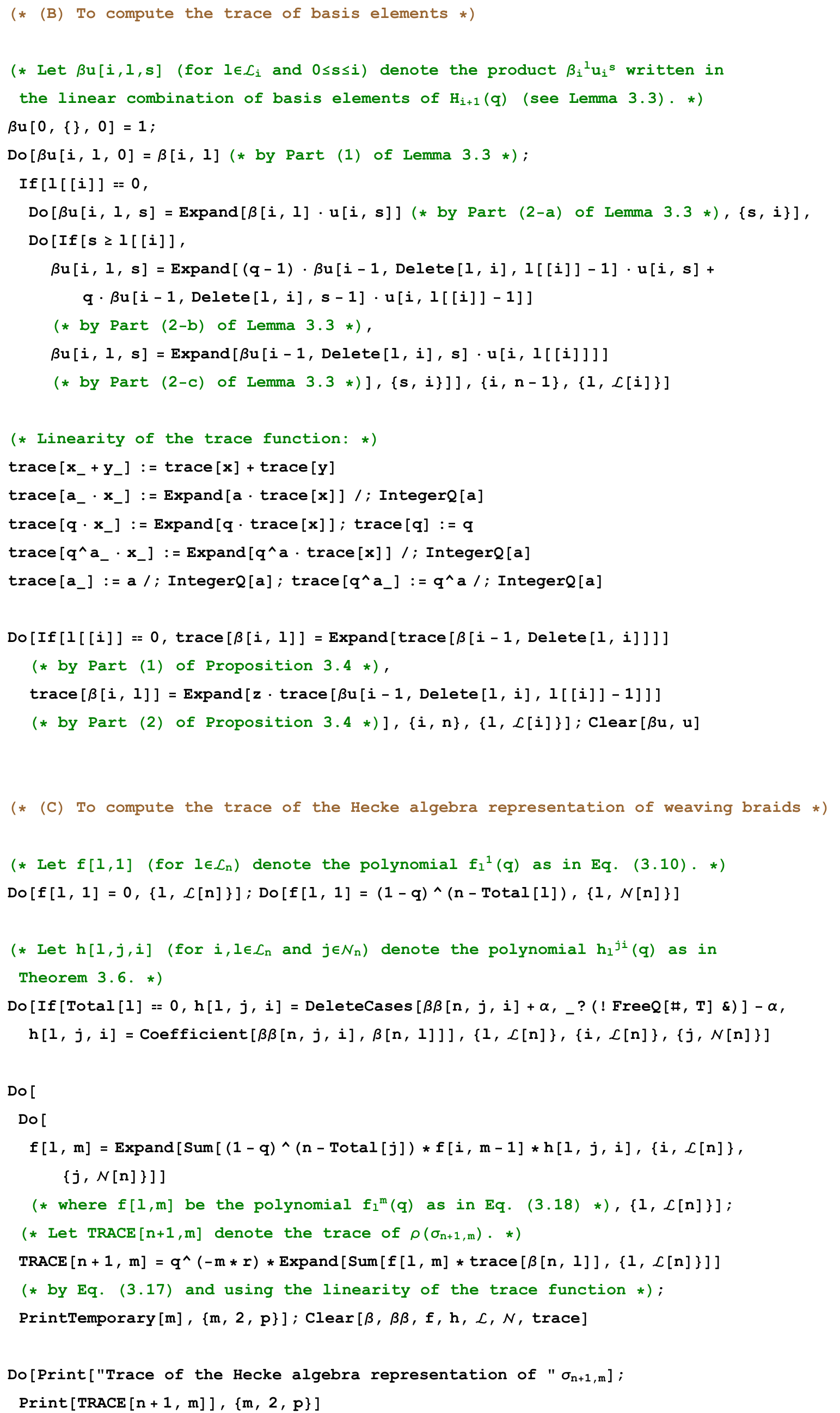}\vskip3mm

For a weaving braid $\sigma_{n+1,m}$ as in Theorem \ref{thm1}, we would like have an estimate on the degree as well as the lowest degree of $Tr\left(\rho\left(\sigma_{n+1,m}\right)\right)$ (note that this a Laurent polynomial in $q$ and $z$). For this, we first prove the following lemma that is going to be used in proving Proposition \ref{prop5} to get the desired estimate.

\begin{lemma}\label{lem5}
Let $\gamma$ be a word in $T_1,T_2,\ldots,T_n$. Then the degree of $Tr(\gamma)$ as a Laurent polynomial in $q$ and $z$ is same as the length of the word $\gamma$.
\end{lemma}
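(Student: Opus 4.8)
The plan is to read ``degree'' as the total degree in $q$ and $z$ (with each weighted $1$), and to observe first that for a genuine word $\gamma$ (only nonnegative powers of the $T_i$) every reduction uses only ${T_j}^2=(q-1)T_j+q$ and the trace axioms, so $Tr(\gamma)\in\mathbb{Z}[q,z]$ and its degree is the ordinary total degree. I would prove the two inequalities $\deg Tr(\gamma)\le\ell(\gamma)$ and $\deg Tr(\gamma)\ge\ell(\gamma)$ separately, where $\ell(\gamma)$ is the length. The organizing device is a \emph{weight filtration} on $H_{n+1}(q)$: assign weight $1$ to each $T_i$ and to $q$ (and $-1$ to $q^{-1}$), and let $F_w$ be the $K$-span of the monomials $q^a\beta_n^l$ with $a+\sum_s l_s\le w$. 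Since the three defining relations are weight-filtered --- the commuting and braid relations are homogeneous of weights $2$ and $3$, while ${T_j}^2=(q-1)T_j+q$ has all of its terms of weight $\le 2$ --- this is an algebra filtration, and a word $\gamma$ lies in $F_{\ell(\gamma)}$.

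For the upper bound I would show, by induction on $n$, that $x\in F_w\Rightarrow\deg Tr(x)\le w$. The inductive heart is the recursion used in the proof of Proposition~\ref{prop4}: for a basis element with $l_n\neq 0$ one has $Tr(\beta_n^l)=z\,Tr(\beta_{n-1}^{l'}u_{n-1}^{l_n-1})$, where $\beta_{n-1}^{l'}u_{n-1}^{l_n-1}$ is a word of length $\sum_s l_s-1$ in the lower generators; the factor $z$ contributes exactly $1$ to the degree, which is exactly balanced by the weight drop of $1$. Combined with Lemma~\ref{lem4} (each reduction step is weight-nonincreasing) this yields $\deg Tr(\gamma)\le\ell(\gamma)$.

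The lower bound is where the content lies, and I would extract it from the associated graded $\mathrm{gr}\,H_{n+1}(q)=\bigoplus_w F_w/F_{w-1}$. There the quadratic relation degenerates to ${T_j}^2=qT_j$ (the ``$+q$'' has lower weight and dies), while the braid and commuting relations survive unchanged; indeed Lemma~\ref{lem3}(1) says precisely that $[u_i^su_i^{s'}]=q\,[u_{i-1}^{s-1}u_i^{s'}]$ in $\mathrm{gr}$ and Lemma~\ref{lem3}(2) gives the commuting rule, so multiplication in $\mathrm{gr}$ carries each product of generators to a single basis vector $[\beta_n^l]$ times a power of $q$ (a deterministic ``Demazure/$0$-Hecke product''). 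I would use this to write, for a word $\gamma$ of length $L$, a \emph{unique} leading term $[\gamma]=q^{a}[\beta_n^{l^\ast}]$ in $F_L/F_{L-1}$ with $a+\sum_s l^\ast_s=L$ and $[\beta_n^{l^\ast}]\neq 0$. Then $Tr(\gamma)=q^{a}Tr(\beta_n^{l^\ast})+Tr(x)$ with $x\in F_{L-1}$, so the last summand has degree $\le L-1$ by the upper bound; and an induction on $n$ (now feeding the level-$(n-1)$ word statement through $Tr(\beta_n^l)=z\,Tr(\beta_{n-1}^{l'}u_{n-1}^{l_n-1})$) gives the \emph{exact} value $\deg Tr(\beta_n^{l^\ast})=\sum_s l^\ast_s=L-a$. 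Hence $q^{a}Tr(\beta_n^{l^\ast})$ has degree exactly $L$ and cannot be cancelled by the strictly-lower-degree remainder, giving $\deg Tr(\gamma)=L$.

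The hard part will be the single fact on which the lower bound rests: that the leading term $q^a[\beta_n^{l^\ast}]$ is one nonzero basis vector, i.e. that rewriting a word in $\mathrm{gr}$ is confluent and that the $\{[\beta_n^l]\}$ stay linearly independent there. I would settle this by noting that Lemma~\ref{lem3} already provides deterministic, weight-respecting rewriting rules with a \emph{unique} top-weight term in case~(1), so iterating Proposition~\ref{prop3}/Lemma~\ref{lem4} terminates in a well-defined $q^a\beta_n^{l^\ast}$ modulo $F_{L-1}$; independence of the $(n+1)!$ vectors $[\beta_n^l]$ then follows from $\dim_K H_{n+1}(q)=(n+1)!$ together with the fact that they span $\mathrm{gr}$. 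If one prefers to bypass $\mathrm{gr}$, the same bookkeeping can be run as a direct nested induction: cyclically write $\gamma\sim T_n w_1T_n\cdots T_n w_k$, expand the gap $w_1$ in the basis of $H_n(q)$ (whose elements contain at most one $T_{n-1}$), and reduce each block $T_n\beta_{n-1}^lT_n$ either by ${T_n}^2=(q-1)T_n+q$ (when $l$ omits $T_{n-1}$) or by the braid move $T_nT_{n-1}T_n=T_{n-1}T_nT_{n-1}$ (when it carries the single $T_{n-1}$), lowering the number of $T_n$'s at each step while exactly one term retains weight $L$.
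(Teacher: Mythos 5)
Your proposal is correct in substance, and it is best described as a rigorization of the argument the paper itself gives. The paper's proof is exactly the same weight bookkeeping you set up --- each $T_i$ carries weight one, the quadratic relation trades a letter for a factor $q$, and the trace axiom $Tr(aT_ib)=z\,Tr(ab)$ trades a letter for a factor $z$ --- but it is stated informally: the paper simply asserts that "the leading term in the trace of any word is $q^iz^j$ with $i+j$ the length," without addressing why top-weight contributions from different reduction branches cannot cancel, nor why "the" leading term of the reduction is well defined. Those two points are precisely what your filtration/associated-graded formulation supplies: the unique top-weight term $q\,u_{i-1}^{s-1}u_i^{s'}$ in Lemma \ref{lem3}(1), the resulting Demazure-type leading monomial $q^a\beta_n^{l^\ast}$ modulo $F_{L-1}$, the exact degree $\deg Tr\!\left(\beta_n^{l^\ast}\right)=\sum_s l_s^\ast$ obtained by feeding the level-$(n-1)$ statement through $Tr\!\left(\beta_n^l\right)=z\,Tr\!\left(\beta_{n-1}^{l'}u_{n-1}^{l_n-1}\right)$, and the upper bound that makes the remainder harmless. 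So your route proves carefully what the paper only asserts; the cost is extra machinery, the benefit is that the no-cancellation step --- the actual mathematical content of the lemma --- is justified rather than assumed.

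One slip must be fixed: $F_w$ cannot be the $K$-span of the monomials $q^a\beta_n^l$ with $a+\sum_s l_s\le w$. Since $q$ is an invertible scalar of the ground field $K$, that $K$-span does not see the powers $q^a$ at all, and (with $a$ allowed to be any integer) every $\beta_n^l$, hence every word, lies in $F_w$ for every $w$; the filtration collapses, destroying both the statement "a word of length $L$ lies in $F_L$" and the implication "$x\in F_{L-1}\Rightarrow\deg Tr(x)\le L-1$". You must instead take the span over constants --- say the $\mathbb{C}$-span (or $\mathbb{Z}$-span) of $\left\{q^a\beta_n^l \mid a+\sum_s l_s\le w\right\}$ --- which is clearly what you intend, since you assign $q$ and $q^{-1}$ nonzero weights. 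With that fix, the linear independence of the monomials $q^a\beta_n^l$ over $\mathbb{C}$ (immediate from $\mathcal{B}_n$ being a $K$-basis) gives the nonvanishing of $\left[\beta_n^{l^\ast}\right]$ in $F_L/F_{L-1}$ that your lower bound needs, with no appeal to a dimension count of the associated graded algebra.
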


\begin{proof}
While computing the trace of a word in $T_k$'s, the first two relations in $H_{n+1}(q)$ and the first two properties of the trace function do not contribute a factor in the trace and hence in its leading term. These relations and properties do not affect the length of the word. The relation ${T_i}^2=(q-1)T_i+q$, for $1\leq i\leq n$, contributes a factor $q$ in the leading term by decreasing the length of the word by one. The property $Tr(aT_ib)=zTr(ab)$, for $a,b\in H_i(q)$ and $1\leq i\leq n$, contributes a factor $z$ in the leading term and the length of the word decreases by one. Keeping the above discussion in the mind, one can observe that the length of a word appears in the form of powers of $q$ and $z$ together in the leading term of the trace of the word. In other words, the leading term in the trace of any word is $q^iz^j$, where $i+j$ is the length of the word taken and $j$ is the number of distinct $T_k$'s appear in the word.
\end{proof}

\begin{proposition}\label{prop5}
For a weaving braid $\sigma_{n+1,m}$ as in Theorem \ref{thm1}, we have the following:
\begin{enumerate}[(1)]
\item Degree of $Tr\left(\rho\left(\sigma_{n+1,m}\right)\right)$ as a Laurent polynomial in $q$ and $z$ is equal to $mn-mr$, where $r=\frac{n-d}{2}$ and $d=\frac{1-(-1)^n}{2}$.
\item $Tr\left(\rho\left(\sigma_{n+1,m}\right)\right)$ as a Laurent polynomial in $q$ and $z$ has the lowest degree equal to $r+d-mr$.
\end{enumerate}
\end{proposition}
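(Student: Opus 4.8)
The plan is to pull the global factor $q^{-mr}$ out in front and analyze the single-repetition element and its $m$-th power directly as words in the $T_i$, so that Lemma \ref{lem5} can be applied monomial by monomial. Write $A=T_1\bigl(T_2+(1-q)\bigr)T_3\bigl(T_4+(1-q)\bigr)\cdots$ for the image of one factor $\sigma_1\sigma_2^{-1}\sigma_3\sigma_4^{-1}\cdots\sigma_n^{\delta}$ after the substitution ${T_i}^{-1}=q^{-1}(T_i+(1-q))$; as in \eqref{eq4} one factor becomes $q^{-r}A$, whence $\rho(\sigma_{n+1,m})=q^{-mr}A^m$. Expanding the $m$ copies of $A$ over the $mr$ negative slots, I would write
\begin{equation*}
A^m=\sum_{S}(1-q)^{|S|}w_S,
\end{equation*}
where $S$ runs over the subsets of the $mr$ negative slots at which the scalar summand $(1-q)$ is chosen, and $w_S$ is the resulting word in $T_1,\dots,T_n$, of length $mn-|S|$. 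By construction every $w_S$ contains all $\lceil n/2\rceil=r+d$ odd-indexed generators, while an even-indexed generator $T_j$ survives in $w_S$ precisely when $S$ omits at least one of the $m$ slots carrying $T_j$. Taking traces gives $Tr\bigl(\rho(\sigma_{n+1,m})\bigr)=q^{-mr}\sum_{S}(1-q)^{|S|}Tr(w_S)$, so it suffices to locate the top and bottom total degrees of $\sum_S(1-q)^{|S|}Tr(w_S)$ and then shift by $-mr$.

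For part (1), Lemma \ref{lem5} gives $\deg Tr(w_S)=\mathrm{len}(w_S)=mn-|S|$, so each summand has total degree at most $|S|+(mn-|S|)=mn$; hence $\deg Tr(\rho(\sigma_{n+1,m}))\le mn-mr$. To see the bound is attained I would isolate the top-degree component. By the description of the leading term in the proof of Lemma \ref{lem5}, the degree-$mn$ part of $(1-q)^{|S|}Tr(w_S)$ is $(-1)^{|S|}q^{\,mn-j_S}z^{\,j_S}$, where $j_S$ is the number of distinct generators in $w_S$; grouping by the value $j=j_S$ gives the degree-$mn$ component $\sum_{j}\bigl(\sum_{S:\,j_S=j}(-1)^{|S|}\bigr)q^{\,mn-j}z^{\,j}$. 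For the extreme value $j=n$ (all generators present) the inner sum factors over the $r$ even generators, each owning a disjoint block of $m$ slots, as $\prod_{\text{even gen}}\bigl(\sum_{T\subsetneq[m]}(-1)^{|T|}\bigr)=\bigl((-1)^{m+1}\bigr)^{r}=(-1)^{r(m+1)}\ne0$. Thus $q^{\,mn-n}z^{\,n}$ survives, the top degree of $Tr(A^m)$ is exactly $mn$, and part (1) follows.

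For part (2), I would first record the lowest-degree analogue of Lemma \ref{lem5}: for any word $w$ the minimal total degree of $Tr(w)$ equals the number $j$ of distinct generators in $w$, the minimal-degree part being the single monomial $(-1)^{\mathrm{len}(w)-j}z^{\,j}$. This is proved by the same induction as Lemma \ref{lem5}, now retaining the $q$-degree-$0$ pieces of the relations (using ${T_i}^2\equiv -T_i$, equivalently $P_k(q)\equiv(-1)^{k-1}$, and then peeling off each distinct generator by the trace axiom to produce one factor $z$). Granting this, $\mathrm{mindeg}\,Tr(w_S)=j_S\ge r+d$ for every $S$, with equality exactly when no even generator survives, i.e.\ for the unique $S$ consisting of all $mr$ slots, where $w_S=(T_1T_3\cdots)^m$. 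That single term contributes $\pm z^{\,r+d}$ and is the only one reaching total degree $r+d$, so no cancellation is possible and $\mathrm{mindeg}\,Tr(A^m)=r+d$. Shifting by $-mr$ yields lowest degree $r+d-mr$, as claimed.

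The main obstacle is non-cancellation at the two extreme degrees: Lemma \ref{lem5} by itself only bounds the degree, and because many different choices $S$ land in the same extreme total degree $mn$ (respectively $r+d$), one must pin down an individual surviving monomial. The top case is handled by the clean factorisation of $\sum_{S:\,j_S=n}(-1)^{|S|}$ over the even generators into nonzero $\pm1$ factors, while the bottom case is forced by the uniqueness of the minimal term; the genuinely new ingredient is the lowest-degree refinement of Lemma \ref{lem5}, whose proof is the exact dual of the one already given.
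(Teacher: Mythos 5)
Your part (1) is correct, and your expansion of $A^m$ over subsets $S$ of the $mr$ negative slots is exactly the paper's \eqref{eq25} in different notation; where you genuinely differ is the non-cancellation step. The paper isolates the monomial $q^{mn-(r+d)}z^{r+d}$ coming from the \emph{minimal} value $j_S=r+d$, which is attained only by the word $(T_1T_3\cdots T_{2r+c})^m$ (every $w_S$ contains all odd-indexed generators, and $j_S=r+d$ forces $S$ to be the full set of slots), so that monomial occurs exactly once and cannot be cancelled. You instead isolate the \emph{maximal} value $j_S=n$ and show the aggregated coefficient $\sum_{S:\,j_S=n}(-1)^{|S|}$ factors over the $r$ blocks as $\left((-1)^{m+1}\right)^r\neq0$. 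Both are valid; the paper's choice buys uniqueness (no inclusion--exclusion needed), yours pins down an explicit nonzero coefficient at $z^{n}$.

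In part (2) there is a genuine flaw: the ``lowest-degree refinement of Lemma \ref{lem5}'' is false as stated. The minimal-degree part of $Tr(w)$ is in general \emph{not} the single monomial $(-1)^{\mathrm{len}(w)-j}z^{j}$: already $Tr\!\left({T_1}^2\right)=(q-1)z+q$ has minimal-degree part $q-z$. Your proposed proof (retain only the $q$-degree-zero pieces, i.e. ${T_i}^2\equiv-T_i$) breaks precisely because the discarded scalar branch ${T_i}^2\to q$ contributes a factor $q$ of degree one, the same cost as a $z$-removal, so the bottom of the trace is a mixture of $q$'s and $z$'s; for the minimal word it is $\pm(z-q)^{r+d}$, not $\pm z^{r+d}$. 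The paper is careful on exactly this point: it claims only that $(-1)^iz^j$ is \emph{among} the lowest-degree terms. Your architecture survives once the lemma is weakened to what is actually needed: (i) every monomial of $Tr(w)$ has total degree at least the number $j$ of distinct generators in $w$ (true: the last occurrence of each distinct generator must disappear either through the trace axiom, factor $z$, or through the scalar term of the quadratic relation, factor $q$, each costing one degree); and (ii) for the unique minimal word, whose letters commute, $Tr\!\left(T_1^mT_3^m\cdots T_{2r+c}^m\right)=\left(P_m(q)\,z+qP_{m-1}(q)\right)^{r+d}$ has nonzero degree-$(r+d)$ part, namely $(-1)^{(m-1)(r+d)}(z-q)^{r+d}$. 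With (i) and (ii), no other summand reaches degree $r+d$, nothing can cancel, and the lowest degree $r+d-mr$ follows; this repaired argument is essentially the paper's own.
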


\begin{proof}
\begin{enumerate}[(1)]
\item Using \eqref{eq4}, one can obtain
\begin{equation}\label{eq25}
\rho\left(\sigma_{n+1,m}\right)=q^{-mr}\sum_{l^1\in\mathcal{N}_n}\sum_{l^2\in\mathcal{N}_n}\cdots\sum_{l^m\in\mathcal{N}_n}(1-q)^{mn-\sum_t\sum_sl_s^t}\beta_n^{l^1}\beta_n^{l^2}\cdots\beta_n^{l^m}
\end{equation}
\noindent where $l^t=\left(l_1^t,l_2^t,\ldots,l_n^t\right)$ for $0\leq t\leq m$. In \eqref{eq25}, a product $\beta_n^{l^1}\beta_n^{l^2}\cdots\beta_n^{l^m}$ is a word in $T_k$'s of length $\sum_t\sum_sl_s^t$. By Lemma \ref{lem5}, the leading term in the trace of $\beta_n^{l^1}\beta_n^{l^2}\cdots\beta_n^{l^m}$ will be $q^iz^j$, where $i+j=\sum_t\sum_sl_s^t$ and $j$ is the number of distinct $T_k$'s appear in $\beta_n^{l^1}\beta_n^{l^2}\cdots\beta_n^{l^m}$. Now the leading term in the trace of $(1-q)^{mn-\sum_t\sum_sl_s^t}\beta_n^{l^1}\beta_n^{l^2}\cdots\beta_n^{l^m}$ becomes $(-1)^{mn-\sum_t\sum_sl_s^t}q^iz^j$, where $i+j=mn$ and $j$ is the number of distinct $T_k$'s appear in $\beta_n^{l^1}\beta_n^{l^2}\cdots\beta_n^{l^m}$. 

The word $(T_1T_3\cdots T_{2r+c})^m$ is such that within it appear the least number of distinct $T_k$'s among all the words $\beta_n^{l^1}\beta_n^{l^2}\cdots\beta_n^{l^m}$ appearing in \eqref{eq25}, where $c$ is either $-1$ or $1$ depending on $n$ is even or odd respectively. Since the number of distinct $T_k$'s in $(T_1T_3\cdots T_{2r+c})^m$ are $r+d$, the leading term in the trace of $(1-q)^{mr}(T_1T_3\cdots T_{2r+c})^m$ will be $(-1)^{mr}q^iz^j$, where $i+j=mn$ and $j=r+d$. Now by looking at \eqref{eq25}, one can see that $(-1)^{mr}q^{i-mr}z^j$ is among the leading terms in the trace of $\rho\left(\sigma_{n+1,m}\right)$, where $i+j=mn$ and $j=r+d$. In other words, the degree of the trace of $\rho\left(\sigma_{n+1,m}\right)$ is $mn-mr$.

\item As noted in the proof of Lemma \ref{lem5}, while computing the trace of a word in $T_k$'s, the first two relations in $H_{n+1}(q)$ and the first two properties of the trace function do not contribute a factor in the trace and hence in its lowest degree term. These relations and properties do not affect the length of the word. The relation ${T_i}^2=(xy+x)T_i+xy$, where $x=-1$ and $y=-q$, contributes a factor $x$ in some lowest degree term and the length of the word decreases by one. The property $Tr(aT_ib)=zTr(ab)$, for $a,b\in H_i(q)$ and $1\leq i\leq n$, contributes a factor $z$ in a lowest degree term by decreasing the length of the word by one. Keeping the above discussion in the mind, one can observe that the length of a word appears in the form of powers of $x$ and $z$ together in some lowest degree term of the trace of the word. With this, one can see that one of the lowest degree term in the trace of $\beta_n^{l^1}\beta_n^{l^2}\cdots\beta_n^{l^m}$ (this is a word in $T_k$'s appearing in \eqref{eq25}) will be $x^iz^j=(-1)^iz^j$, where $i+j$ is the length of $\beta_n^{l^1}\beta_n^{l^2}\cdots\beta_n^{l^m}$ as a word in $T_k$'s and $j$ is the number of distinct $T_k$'s appear in $\beta_n^{l^1}\beta_n^{l^2}\cdots\beta_n^{l^m}$.

As mentioned in the first part, the word $(T_1T_3\cdots T_{2r+c})^m$ is such that within it appear the least number of distinct $T_k$'s among all the words $\beta_n^{l^1}\beta_n^{l^2}\cdots\beta_n^{l^m}$ appearing in \eqref{eq25}. The length of $(T_1T_3\cdots T_{2r+c})^m$ is $m(r+d)=mn-mr$ and the number of distinct $T_k$'s in this word are $r+d$. Thus, one of the lowest degree term in the trace of $(T_1T_3\cdots T_{2r+c})^m$ is $x^iz^j=(-1)^iz^j$ for $i+j=mn-mr$ and $j=r+d$. Finally, one of the lowest degree term in the trace of $\rho\left(\sigma_{n+1,m}\right)$ will be $(-1)^iq^{-mr}z^j$ for $i+j=mn-mr$ and $j=r+d$.\qedhere
\end{enumerate}
\end{proof}

\section{From trace to polynomial invariants}\label{sec4}

We use the construction given in \cite{hkw} (see also \cite{j2}) and work over the function field $K=\mathbb{C}(q,z)$ to obtain expressions for the Alexander polynomial \cite{a}, the Jones polynomial \cite{j1} and the HOMFLY-PT polynomial \cite{homfly,pt} for weaving knots. The expressions are subsequently refined to incorporate information obtained in Section \ref{sec3}.

Referring \cite[Section 6]{hkw}, the invariant $V_\alpha$ in variables $q$ and $z$ of a link $L$ which is the closure of a braid $\alpha\in B_{n+1}$ is given by
\begin{equation}\label{eq10}
V_\alpha(q,z)=\left(\frac{1}{z}\right)^{\frac{n+e}{2}}\left(\frac{q}{w}\right)^{\frac{n-e}{2}}Tr\!\left(\rho(\alpha)\right)
\end{equation}
\noindent where $w=1-q+z$ and $e$ is the exponent sum of $\alpha$ written as a word in $\sigma_i$'s. The \eqref{eq10} defines an element in the quadratic extension $K\!\left(\sqrt{q/zw}\right)$ of $K$. Now consider the invariant $X_L$ in variables $q$ and $\lambda$ as defined in \cite[Section 6]{j2}. It is given by
\begin{equation}
X_L(q,\lambda)=\left(-\frac{1-\lambda\,q}{\sqrt{\lambda}\,(1-q)}\right)^n\left(\sqrt{\lambda}\right)^{e}Tr(\pi(\alpha))
\end{equation}
\noindent where $\pi$ is the representation $\rho$ and $Tr(\pi(\alpha))$ is evaluated at $z=-\frac{1-q}{1-\lambda\,q}$\,. One can see that the invariant $X_L$ is nothing but $V_\alpha$ evaluated at $z=-\frac{1-q}{1-\lambda\,q}$\,.

For a weaving knot $W(n+1,m)$, viewed as the closure of $\sigma_{n+1,m}=\left(\sigma_1\sigma_2^{-1}\sigma_3\sigma_4^{-1}\cdots\sigma_n^\delta\right)^m$, we have the exponent sum $e=0$ if $n$ is even and $e=m$ if $n$ is odd. The invariant $V_{\sigma_{n+1,m}}$ (as defined in \eqref{eq10}) of $W(n+1,m)$ is given by
\begin{equation}\label{eq11}
V_{\sigma_{n+1,m}}(q,z)=\left(\frac{1}{z}\right)^{\frac{n+e}{2}}\left(\frac{q}{w}\right)^{\frac{n-e}{2}}Tr(\rho(\sigma_{n+1,m}))
\end{equation}
\noindent Recall that, using the algorithm in Section \ref{sec3}, we can compute $Tr(\rho(\sigma_{n+1,m}))$. In fact, this can be computed using the Mathematica program discussed in Section \ref{sec3}. Thus, using \eqref{eq11}, one can compute $V_{\sigma_{n+1,m}}(q,z)$.

Using \eqref{eq7} if $m\geq2$ (or using \eqref{eq5} if $m=1$) in \eqref{eq11} and using the linearity of the trace, we have the following:
\begin{equation}\label{eq12}
V_{\sigma_{n+1,m}}(q,z)=\left(\frac{1}{z}\right)^{\frac{n+e}{2}}\left(\frac{q}{w}\right)^{\frac{n-e}{2}}q^{-mr}\sum_{l\in\mathcal{L}_n}f_l^m(q)\,Tr\!\left(\beta_n^l\right)
\end{equation}
\noindent where polynomials $f_l^m$, for $m\geq2$, are given by \eqref{eq8} with the initial polynomials $f_l^1$ as in \eqref{eq6}.

Following \cite[Section 6]{hkw}, we point out that the universal skein invariant $P_{W(n+1,m)}(\ell, m)$, an element of the Laurent polynomial ring $\mathbb{Z}\!\left[\ell, \ell^{-1}, m , m^{-1}\right]$, is obtained by rewriting $V_{\sigma_{n+1,m}}(q,z)$ in terms of 
\begin{equation}\label{eq13}
\ell=i(z/w)^{1/2}\qquad\text{and}\qquad m=i\left(q^{-1/2}-q^{1/2}\right)
\end{equation}

Starting from $P_{W(n+1,m)}(\ell, m)$, the Alexander polynomial $\Delta_{W(n+1,m)}(t)$ is obtained by setting $\ell=i$ and $m=i\left(t^{1/2}-t^{-1/2}\right)$ (see \cite[Section 7]{hkw}). One can also obtain it by setting
\begin{equation}\label{eq14}
\ell=-i\qquad\text{and}\qquad m=i\left(t^{-1/2}-t^{1/2}\right)
\end{equation}
\noindent Solving \eqref{eq13} and \eqref{eq14} together, one gets $q=t$ and $z=\frac{t-1}{b}$, where $b=1+\ell^2$. The Alexander polynomial $\Delta_{W(n+1,m)}(t)$ is obtained by substituting $q=s+1$, $z=\frac{s}{b}$ and $w=\frac{s}{b}$ in \eqref{eq11}, simplifying the expression thereafter and then substituting $s=t-1$ and $b=1+(-i)^2=0$ at the last. With this, we have the following Mathematica program to compute the Alexander polynomial for weaving knots. This is in continuation with the program for the trace discussed in Section \ref{sec3}.\vskip3mm
\noindent\hspace{-3pt}\includegraphics[width=1.01\linewidth]{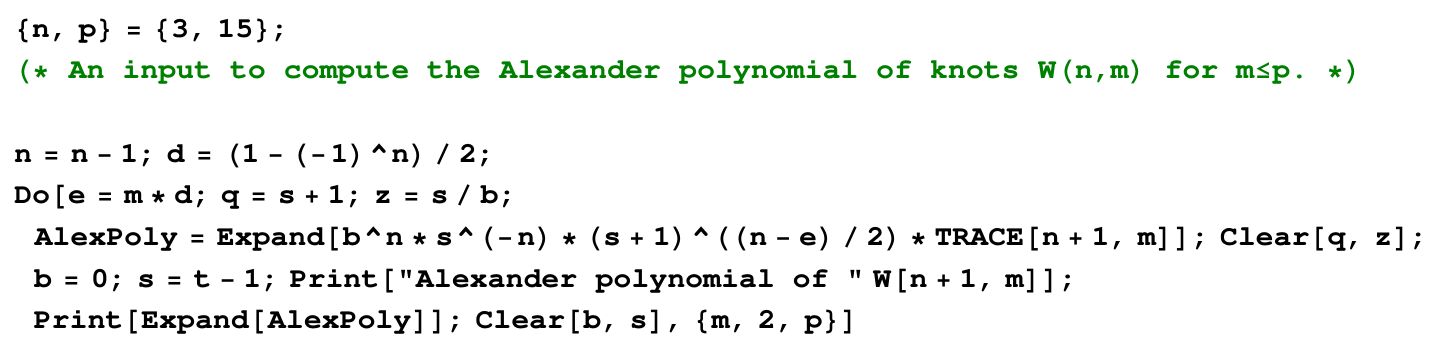}\vskip3mm
\noindent Below are some examples of the Alexander polynomial computed using this program.
\begin{itemize}
\item\(\Delta_{W(3,5)}(t)=29+\dfrac{1}{t^4}-\dfrac{6}{t^3}+\dfrac{15}{t^2}-\dfrac{24}{t}-24 t+15 t^2-6 t^3+t^4\)
\item\(\Delta_{W(5,2)}(t)=13+\dfrac{1}{t^2}-\dfrac{7}{t}-7 t+t^2\)
\item\(\Delta_{W(6,5)}(t)=79781+\frac{1}{t^{10}}-\frac{21}{t^9}+\frac{195}{t^8}-\frac{1075}{t^7}+\frac{4010}{t^6}-\frac{10989}{t^5}+\frac{23485}{t^4}-\frac{40871}{t^3}+\frac{59620}{t^2}-\frac{74245}{t}-74245t+59620 t^2-40871 t^3+23485 t^4-10989 t^5+4010 t^6-1075 t^7+195 t^8-21 t^9+t^{10}\)
\end{itemize}
\noindent Here, the weaving knots $W(3,5)$ and $W(5,2)$ are respectively the knots $10_{123}$ and $8_{12}$ in the Rolfsen Knot Table (\url{http://katlas.org/wiki/The_Rolfsen_Knot_Table}).

Referring \cite[Section 0]{lm}, the Jones polynomial $V_{W(n+1,m)}(t)$ is obtained from $P_{W(n+1,m)}(\ell, m)$ by setting
\begin{equation}\label{eq15}
\ell=it^{-1}\qquad\text{and}\qquad m=i\left(t^{-1/2}-t^{1/2}\right)
\end{equation}
\noindent Solving \eqref{eq13} and \eqref{eq15} together, we get $q=t$ and $z=\frac{-1}{1+t}$\,. Substituting $q=t$, $z=\frac{-1}{1+t}$ and $w=\frac{-t^2}{1+t}$ in \eqref{eq11}, we get the Jones polynomial $V_{W(n+1,m)}(t)$. With this, we have the following Mathematica program to compute the Jones polynomial for weaving knots. This works along with the program for the trace discussed in Section \ref{sec3}.\vskip3mm
\noindent\hspace{-3pt}\includegraphics[width=1.01\linewidth]{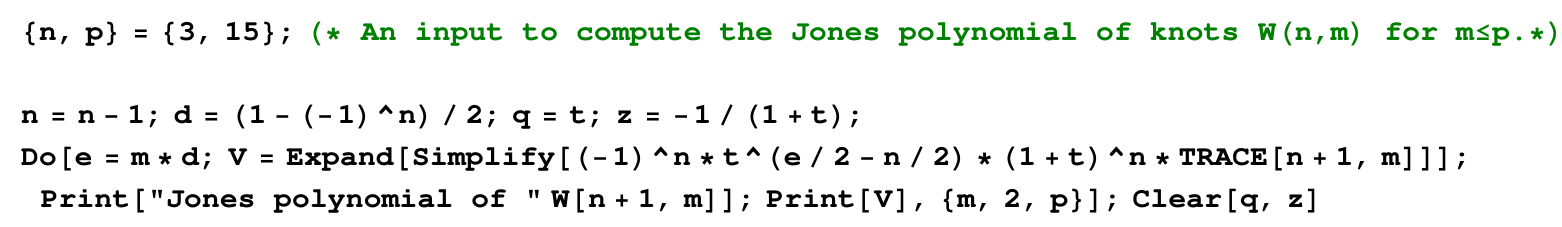}\vskip3mm
\noindent Below are some examples of the Jones polynomial computed using this program.
\begin{itemize}
\item\(V_{W(3,3)}(t)=4-\dfrac{1}{t^3}+\dfrac{3}{t^2}-\dfrac{2}{t}-2 t+3 t^2-t^3\)
\item\(V_{W(4,3)}(t)=-8-\dfrac{1}{t^2}+\dfrac{5}{t}+11 t-13 t^2+13 t^3-11 t^4+8 t^5-4 t^6+t^7\)
\item\(V_{W(6,4)}(t)=-\frac{1}{t^{13/2}}+\frac{11}{t^{11/2}}-\frac{58}{t^{9/2}}+\frac{200}{t^{7/2}}-\frac{519}{t^{5/2}}+\frac{1079}{t^{3/2}}-\frac{1869}{\sqrt{t}}+2776\sqrt{t}-3613 t^{3/2}+4177 t^{5/2}-4324 t^{7/2}+4018 t^{9/2}-3335 t^{11/2}+2453 t^{13/2}-1579 t^{15/2}+872 t^{17/2}-407 t^{19/2}+154 t^{21/2}-45t^{23/2}+9 t^{25/2}-t^{27/2}\)
\end{itemize}
\noindent Here, the weaving knot $W(3,3)$ is the link $L6a4$ in the Thistlethwaite Link Table (\url{http://katlas.org/wiki/The_Thistlethwaite_Link_Table}) and the weaving knot $W(4,3)$ is the mirror image of the knot $9_{40}$ in the Rolfsen Knot Table. One can see that $V_{9_{40}}(t)=V_{W(4,3)}\!\left(t^{-1}\right)$ which is the case in general for the mirror image of a knot. We note that $W(3,3)$ and $W(6,4)$ are links with $3$ and $2$ components respectively having the Jones polynomial as Laurent polynomials in $t$ and $t^{1/2}$ respectively.

The HOMFLY-PT polynomial $H_{W(n+1,m)}(\mathtt{a},\mathtt{z})$ is obtained from the polynomial $P_{W(n+1,m)}(\ell, m)$ by the change of variables as follows:
\begin{equation}\label{eq16}
\ell=i\mathtt{a}\qquad\text{and}\qquad m=i\mathtt{z}
\end{equation}
\noindent Solving \eqref{eq13} and \eqref{eq16} together gives $q=\frac{\mathtt{w}\mathtt{z}}{2}+1$ and $z=\frac{\mathtt{a}^2\mathtt{w}\mathtt{z}}{2\mathtt{b}}$, where $\mathtt{w}=\mathtt{z}+\sqrt{\mathtt{z}^2+4}$ and $\mathtt{b}=\mathtt{a}^2-1$\,. The HOMFLY-PT polynomial $H_{W(n+1,m)}(\mathtt{a},\mathtt{z})$ is obtained by substituting $q=\frac{\mathtt{w}\mathtt{z}}{2}+1$, $z=\frac{\mathtt{a}^2\mathtt{w}\mathtt{z}}{2\mathtt{b}}$ and $w=\frac{\mathtt{w}\mathtt{z}}{2\mathtt{b}}$ with $\mathtt{w}=\mathtt{z}+\sqrt{\mathtt{z}^2+4}$ in \eqref{eq11}, simplifying the expression thereafter and then substituting $\mathtt{b}=\mathtt{a}^2-1$ at the last. With this, we have the following Mathematica program to compute the HOMFLY-PT polynomial for weaving knots. This is in continuation with the program for the trace discussed in Section \ref{sec3}.\vskip3mm
\noindent\hspace{-3pt}\includegraphics[width=1.01\linewidth]{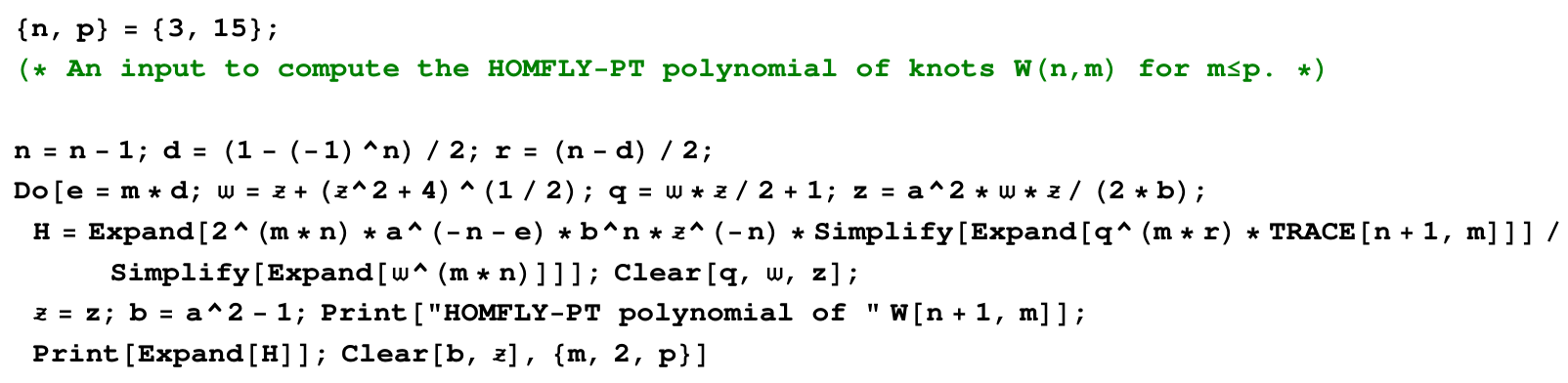}\vskip3mm
\noindent Below are some examples of the HOMFLY-PT polynomial computed using this program.
\begin{itemize}
\item\(H_{W(3,12)}(a,z)=-\frac{2}{z^2}+\frac{1}{a^2 z^2}+\frac{a^2}{z^2}+32 z^2-\frac{16 z^2}{a^2}-16 a^2 z^2+16 z^4-120 z^6+\frac{60 z^6}{a^2}+60 a^2 z^6-100
z^8+\frac{20 z^8}{a^2}+20 a^2 z^8+124 z^{10}-\frac{72 z^{10}}{a^2}-72 a^2 z^{10}+168 z^{12}-\frac{48 z^{12}}{a^2}-48 a^2 z^{12}+10 z^{14}+\frac{19
z^{14}}{a^2}+19 a^2 z^{14}-73 z^{16}+\frac{27 z^{16}}{a^2}+27 a^2 z^{16}-45 z^{18}+\frac{9 z^{18}}{a^2}+9 a^2 z^{18}-11 z^{20}+\frac{z^{20}}{a^2}+a^2
z^{20}-z^{22}\)
\item\(H_{W(4,2)}(a,z)=-\dfrac{1}{a^5 z}+\dfrac{1}{a^3 z}+\dfrac{2 z}{a^3}-\dfrac{z}{a}+a z-\dfrac{z^3}{a}\)
\item\(H_{W(5,2)}(a,z)=1+\dfrac{1}{a^4}-\dfrac{1}{a^2}-a^2+a^4+z^2-\dfrac{2 z^2}{a^2}-2 a^2 z^2+z^4\)
\end{itemize}
Here, the weaving knot $W(4,2)$ is the mirror image of the link $L6a1$ in the Thistlethwaite Link Table and the weaving knot $W(5,2)$ is the knot $8_{12}$ in the Rolfsen Knot Table. It can be seen that $H_{L6a1}(a,z)=H_{W(4,2)}\!\left(-a^{-1},z\right)$ which is the case in general for the mirror image of a knot. Note that $W(3,12)$ and $W(4,2)$ are links with $3$ and $2$ components respectively.

\section{Higher twist numbers versus volume}\label{sec5}

In \cite{dl}, Dasbach and Lin introduced higher twist numbers of a knot (link) $L$ in terms of the coefficients of its Jones polynomial $V_L(t)$ with the idea that these invariants correlate with the hyperbolic volume of the knot (link) complement. The twist numbers are defined as follows:
\begin{definition}
Let
\begin{equation*}
 V_L(t)=\lambda_{l}t^{l} + \lambda_{l+1}t^{l+1} + \cdots + \lambda_{h-1}t^{h-1} + \lambda_ht^h
\end{equation*}
\noindent be the Jones polynomial of a knot (link) $L$. Then the $j^{\text{th}}$ twist number $T_j(L)$ of $L$ is defined as $T_j(L)=|{\lambda_{l+j}}|+|{\lambda_{h-j}}|$\,.
\end{definition}

Note that twist numbers $T_j(L)$ are only defined for $j$ within the span of the Jones polynomial of $L$. In the case of a weaving knot $W(n+1,m)$, the relevant twist numbers are defined for $1\leq j<\frac{mn}{2}$. Here $mn$ is the span of the Jones polynomial of $W(n+1,m)$.

According to \cite{ckp2}, the volume of a weaving knot $W(n+1,m)$, for $n\geq 2$ and $m \geq 7$, is estimated as
\begin{equation}\label{eq17}
v_{\rm oct}\,(n-1)\,m \left(1-\frac{4\pi^2}{m^2}\right)^{3/2} \leq {\rm vol}\left(W(n+1,m)\right)<\left(v_{\rm oct}\,(n-2)+4\,v_{\rm tet}\right)m
\end{equation}
\noindent Here $v_{\rm tet}$ and $v_{\rm oct}$ denote the volumes of the regular ideal tetrahedron and the regular ideal octahedron respectively (note that $v_{\rm tet}\approx1.01494$ and $v_{\rm oct}\approx3.66386$). Champanerkar, Kofman, and Purcell call these bounds asymptotically sharp because their ratio approaches $1$ as $m$ and $n$ tend to infinity. The crossing number of $W(n+1,m)$ is known to be $mn$. Dividing this number throughout, the inequalities in \eqref{eq17} imply the following:
\begin{equation}\label{eq18}
\frac{v_{\rm oct}}{n}\,(n-1)\left(1-\frac{4\pi^2}{m^2}\right)^{3/2}\leq\frac{{\rm vol}\left(W(n+1,m)\right)}{mn}<\frac{v_{\rm oct}\,(n-2)+4\,v_{\rm tet}}{n}
\end{equation}
\noindent For a fixed $n$, note that the upper bound for the ratio $\frac{{\rm vol}\left(W(n+1,m)\right)}{mn}$ is constant for all $m$. Let $L_n=\frac{v_{\rm oct}}{n}(n-1)$ and $U_n=\frac{v_{\rm oct}(n-2)+4v_{\rm tet}}{n}$, and let $v_n(m)$ denote the relative volume $\frac{{\rm vol}\left(W(n+1,m)\right)}{mn}$ of the weaving knot $W(n+1,m)$. In these notations, one can write \eqref{eq18} as follows:
\begin{equation}\label{eq19}
L_n\left(1-\frac{4\pi^2}{m^2}\right)^{3/2}\leq v_n(m)<U_n
\end{equation}
\noindent By taking the limit infimum of both the sides of the first inequality and the limit supremum of both the sides of the second inequality, we get the following:
\begin{equation}\label{eq20}
L_n\leq\liminf_{m\to\infty}v_n(m)\leq\limsup_{m\to\infty}v_n(m)\leq U_n
\end{equation}

We can ask whether or not better bounds on the relative volume of weaving knots can be observed in terms of the higher twist numbers? In \cite{ms}, the authors took the $k^{\text{th}}$ root of $T_k(W(3,m))$ and then divided by the crossing number $2m$ to obtain an expression whose limit as $m$ tends to infinity is finite. Multiplying by a normalization constant $C_k$ so that
\begin{equation*}
\lim_{m\to\infty}C_k\,\frac{\sqrt[k]{T_k\!\left(W(3,m)\right)}}{2\:\!m} = 2\:\!v_{\rm tet}\,,
\end{equation*}
\noindent they plotted the lower bound $\frac{v_{\rm oct}}{2}\left(1-\frac{4\pi^2}{m^2}\right)^{3/2}$, the upper bound $2v_{\rm tet}$, the relative volume $v_2(m)$ and $\tau_k(m)=C_k\,\frac{\sqrt[k]{T_k\!\left(W(3,m)\right)}}{2\:\!m}$ for $k=2,3,4$. The authors showed that all three of the curves $\tau_k$ provide better lower bounds on the relative volume than the lower bound $\frac{v_{\rm oct}}{2}\left(1-\frac{4\pi^2}{m^2}\right)^{3/2}$. To explore more on this, we used the program (see Section \ref{sec4}) for the Jones polynomial and extended it to compute various twist numbers. We ran the program for $W(3,m)$, $W(4,m)$, $W(5,m)$ and $W(6,m)$ for various values of $m$. In an experiment we performed, we observed that
\begin{equation}\label{eq26}
\frac{T_k(m)}{\frac{\left((r+d)^k+r^k\right)m^k}{k!}}
\end{equation}
\noindent converges to $1$ as $m$ grows large, where $T_k(m)$ is the $k^{\text{th}}$ twist number of $W(n+1,m)$ for $n$ fixed, $d=\frac{1-(-1)^n}{2}$ and $r=\frac{n-d}{2}$\,. Let us denote \eqref{eq26} by $f_k(m)$ and its $k^{\text{th}}$ root by $g_k(m)$. The tables at the end (Table \ref{tb1} to Table \ref{tb2}) show the values of $f_k(m)$ for the families $W(3,m)$, $W(4,m)$, $W(5,m)$ and $W(6,m)$ for various values of $k$ and $m$. For every $k$, the data suggests that both $f_k(m)$ and $g_k(m)$ converge to $1$ as $m$ grows large. With this discussion, we conjuncture the following.
\begin{conjecture}
For a fixed $n$, let $T_k(m)$ be the $k^{\text{th}}$ twist number of $W(n+1,m)$. Then
\begin{equation*}
T_k(m)\approx\frac{\left((r+d)^k+r^k\right)m^k}{k!}\qquad\text{for $m$ large enough,}
\end{equation*}
\noindent where $d=\frac{1-(-1)^n}{2}$ and $r=\frac{n-d}{2}$.
\end{conjecture}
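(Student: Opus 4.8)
The plan is to exploit that $\rho(\sigma_{n+1,m})=\rho(\sigma_{n+1,1})^m$, so that the entire $m$-dependence of the Jones polynomial is carried by $m$-th powers of finitely many algebraic functions of $t$. Concretely, the recursion \eqref{eq8} expresses the coefficient vector $f^m=(f_l^m)_{l\in\mathcal{L}_n}$ as $f^m=A(q)\,f^{m-1}$ for a fixed transfer matrix $A(q)$ (left multiplication by $q^r\rho(\sigma_{n+1,1})$ in the basis $\mathcal{B}_n$), whence $f^m=A(q)^{m-1}f^1$. Diagonalizing $A(q)$ and combining with \eqref{eq12}, I would write $Tr(\rho(\sigma_{n+1,m}))=\sum_j c_j(q)\,\psi_j(q)^m$, where the $\psi_j(q)$ are the eigenvalues of $\rho(\sigma_{n+1,1})$ and the $c_j(q)$ collect the trace weights. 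Substituting the Jones specialization $q=t$, $z=\frac{-1}{1+t}$, $w=\frac{-t^2}{1+t}$ into \eqref{eq11} turns the prefactor into $P_0(t)=(-(1+t))^n t^{-n/2}$ times an $m$-dependent factor $t^{m\epsilon}$, with $\epsilon=0$ if $n$ is even and $\epsilon=\tfrac12$ if $n$ is odd (this is where $e=m$ enters). Absorbing $t^\epsilon$ into the eigenvalues, I obtain
\[
V_{W(n+1,m)}(t)=P_0(t)\sum_j c_j(t)\,\phi_j(t)^m ,
\]
a finite combination of $m$-th powers of algebraic functions $\phi_j(t)$ with $P_0$ independent of $m$.

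Next I would localize near the two extreme degrees. By Proposition \ref{prop5} the top and bottom degrees of $V_{W(n+1,m)}$ grow linearly in $m$, so for fixed $k$ the coefficients $\lambda_{h-k}$ and $\lambda_{l+k}$ sit near the ends. Ordering the $\phi_j$ by the order of their Laurent expansion at $t=\infty$ (respectively the pole order at $t=0$), the top (bottom) coefficients of $V$ are governed by those $\phi_j$ of maximal such order. The essential structural claim I would establish is that a \emph{single} eigenvalue dominates at each end: near the top $V\sim P_0(t)\,c(t)\,\phi_{\mathrm{top}}(t)^m$ and near the bottom $V\sim P_0(t)\,\tilde c(t)\,\phi_{\mathrm{bot}}(t)^m$. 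Since weaving knots are alternating, hence adequate, the extreme coefficient of $V$ equals $\pm1$, which forces the leading coefficient of $\phi_{\mathrm{top}}$ at infinity to be a unit; I would then write $\phi_{\mathrm{top}}(t)=\pm t^{D}\big(1+\beta_{\mathrm{top}}\,t^{-1}+O(t^{-2})\big)$ and similarly $\phi_{\mathrm{bot}}(t)=\pm t^{-D'}\big(1+\beta_{\mathrm{bot}}\,t+O(t^{2})\big)$ near $0$.

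The reason for isolating one eigenvalue is that its $m$-th power has a transparent near-extreme expansion: the coefficient of $t^{-k}$ in $\big(1+\beta t^{-1}+O(t^{-2})\big)^m$ equals $\binom{m}{k}\beta^k$ plus terms of lower order in $m$. Multiplying by the $m$-independent factor $P_0(t)c(t)$, whose top coefficient is a unit, only shifts the degree by a bounded amount and rescales by a constant, mixing a bounded number of near-extreme coefficients without changing the leading growth in $m$. Hence, for every fixed $k$,
\[
|\lambda_{h-k}|=\binom{m}{k}\,|\beta_{\mathrm{top}}|^k\,(1+o(1))=\frac{|\beta_{\mathrm{top}}|^k}{k!}\,m^k\,(1+o(1)),
\]
and symmetrically $|\lambda_{l+k}|=\tfrac{|\beta_{\mathrm{bot}}|^k}{k!}m^k(1+o(1))$. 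Adding the two yields $T_k(m)\approx \frac{\big(|\beta_{\mathrm{top}}|^k+|\beta_{\mathrm{bot}}|^k\big)}{k!}m^k$, so the conjecture reduces exactly to the identities $\{|\beta_{\mathrm{top}}|,|\beta_{\mathrm{bot}}|\}=\{r+d,\,r\}$.

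The hard part is everything about the dominant eigenvalues. First one must prove that a single $\phi_j$ controls each extreme, i.e.\ rule out ties among eigenvalues of equal extreme order whose $m$-th powers could interfere or cancel; the zero of $P_0(t)=(-(1+t))^n t^{-n/2}$ at $t=-1$ interacts with the eigenvalues and makes this delicate. Second, one must actually compute the first two terms of the Laurent expansions of $\phi_{\mathrm{top}}$ and $\phi_{\mathrm{bot}}$ and verify that the subleading-to-leading ratios have absolute values $r+d$ and $r$. This amounts to diagonalizing, to first order, the weaving transfer matrix $A(t)$ whose size grows with $n$, and it is precisely this eigenvalue computation, uniform in $n$, that I expect to be the genuine obstruction and the reason the statement is recorded as a conjecture. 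A possible alternative to the transfer-matrix route is the combinatorial one through adequacy: express the first few coefficients of $V_{W(n+1,m)}$ via the reduced all-$A$ and all-$B$ state graphs of the $m$-periodic weaving diagram and read off the per-period contributions $r+d$ and $r$ from a single fundamental domain, the periodicity of the diagram being exactly what should produce the binomial factors $\binom{m}{k}$ directly.
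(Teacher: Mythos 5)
First, a point of calibration: the statement you are asked to prove is recorded in the paper as a \emph{conjecture}, and the paper offers no proof of it. The authors' entire support is experimental --- the values of $f_k(m)$ in Table \ref{tb1} through Table \ref{tb2}, computed with their Mathematica programs for the Jones polynomial and twist numbers, which appear to tend to $1$ as $m$ grows. So there is no argument in the paper to compare yours against; the only question is whether your proposal actually closes the gap, and it does not.

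Your transfer-matrix setup is sound as far as it goes: since $\rho(\sigma_{n+1,m})=\rho(\sigma_{n+1,1})^m$ and the trace is linear, $Tr(\rho(\sigma_{n+1,m}))$ is a finite combination of $m$-th powers of eigenvalues of the multiplication operator, and your binomial-expansion step is essentially correct \emph{conditional on} strict single-eigenvalue dominance: the coefficient of $t^{-k}$ relative to the top degree in $P_0(t)c(t)\phi_{\mathrm{top}}(t)^m$ has leading term $a_0\beta_{\mathrm{top}}^k m^k/k!$ in $m$, and adequacy of the reduced alternating diagram gives $|a_0|=1$. But the two claims you defer to the end --- (i) that exactly one eigenvalue controls each extreme, with no ties among eigenvalues of equal extreme order, no Jordan blocks (a nontrivial Jordan block would produce terms $m^j\psi^m$ that corrupt the $\binom{m}{k}$ growth, and you nowhere justify diagonalizability over $\mathbb{C}(t)$), and no cancellation against the zero of $P_0$ at $t=-1$; and (ii) that the subleading-to-leading ratios of the dominant eigenvalues have moduli exactly $r+d$ and $r$ --- are not auxiliary technicalities. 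Together with your expansion step they are logically equivalent to the conjecture, for every fixed $n$ simultaneously. What you have produced is therefore a spectral \emph{reformulation} of the statement together with an honest list of what remains unproven --- a genuinely useful way to attack the problem, and arguably a sharper formulation than the paper's purely numerical one, but not a proof; your own closing sentence concedes exactly this.
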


For a fixed $n$, consider the following functions:
\begingroup
\allowdisplaybreaks
\begin{align*}
L_k^1(m)&= L_n\left(1+|1-f_k(m)|\right)\qquad\qquad\quad\;\,\,L_k^2(m)=\frac{L_n}{1-|1-f_k(m)|}\\[3pt]
L_k^3(m)&= L_n\left(1+|1-g_k(m)|\right)\qquad\text{and}\qquad L_k^4(m)=\frac{L_n}{1-|1-g_k(m)|}\;.
\end{align*}
\endgroup
\noindent From our observations regarding $f_k(m)$ and $g_k(m)$, the sequence $L_k^i(m)$, for $i=1,2,3$ and $4$, is decreasing and $\lim_{m\to\infty}L_k^i(m)=L_n$. If the first inequality in \eqref{eq20} is strict, then one can observe that $L_k^i(m)\leq v_n(m)$ for $m$ large enough. Thus, each $L_k^i(m)$ seems to provide a better lower bound for the relative volume $v_n(m)$ than the lower bound as in \eqref{eq19}. As an example for the family $W(4,m)$, Figure \ref{fig2} displays the graphs of $L_k^3$ for various values of $k$ along with the graphs of bounds on relative volume as in \eqref{eq19} and the values of relative volume computed using SnapPy \cite{cdgw}.

\begin{figure}[H]
\begin{center}
\includegraphics[width=1\linewidth]{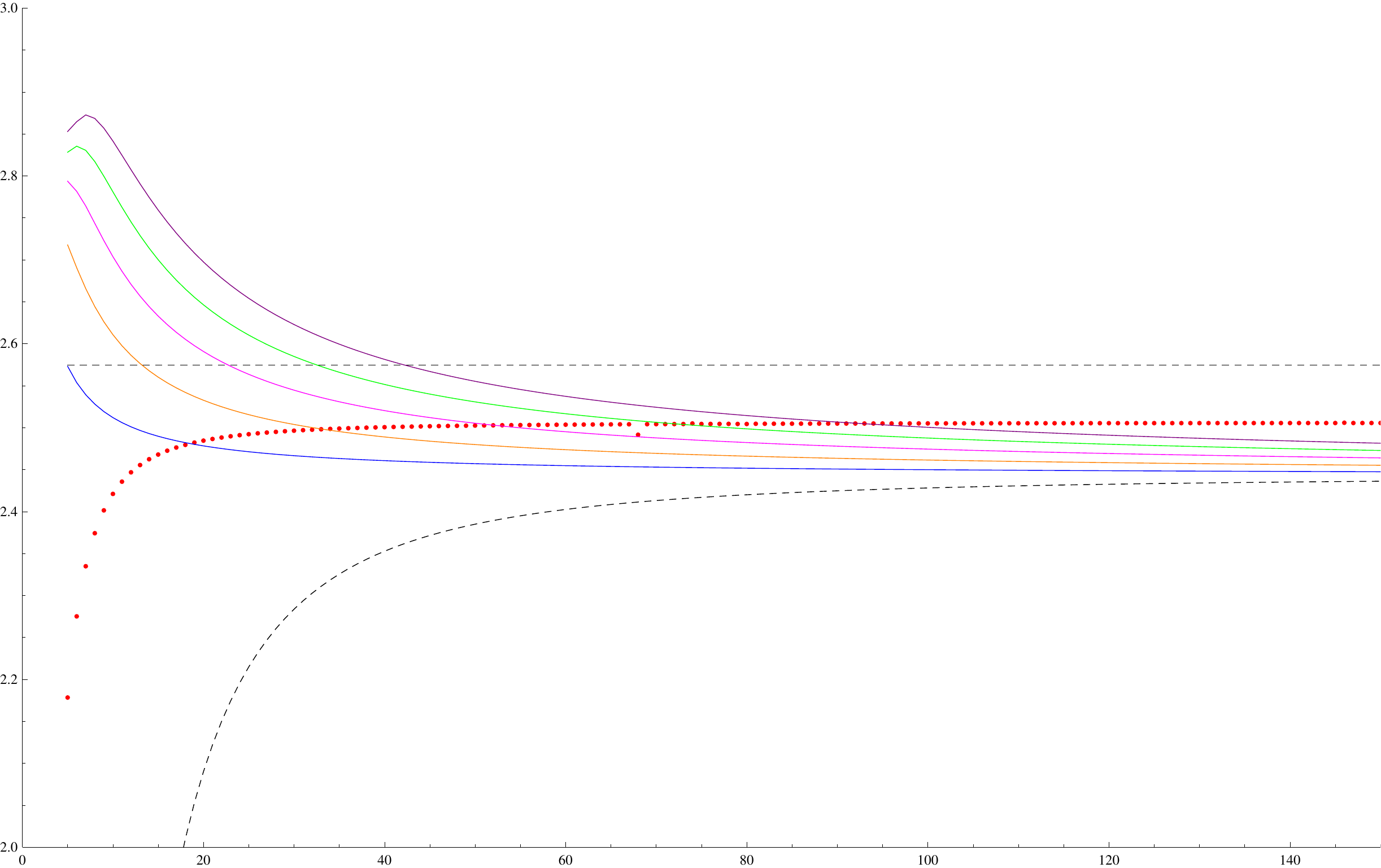}
\caption{Comparing $L_k^3(m)$ with the lower bound on the relative volume for $W(4,m)$}
\label{fig2}
\end{center}
\end{figure}

In Figure \ref{fig2}, the horizontal axis takes the values of $m$, the lower most curve and the top horizontal line represent the lower and upper bounds for the relative volume given in \eqref{eq19}. The dotted curve in red is for the values of relative volume computed using SnapPy \cite{cdgw}. The curves in blue, orange, magenta, green and purple represent the graphs of $L_k^3$ for $k=2,3,4,5$ and $6$ respectively. It is evident from the graphs that as $k$ value is higher, the lower bound for the relative volume gets better.

Similarly, for a fixed $n$, consider the following functions:
\begingroup
\allowdisplaybreaks
\begin{align*}
U_k^1(m)&= U_n\left(1-|1-f_k(m)|\right)\qquad\qquad\quad\;\,\,U_k^2(m)=\frac{U_n}{1+|1-f_k(m)|}\\[3pt]
U_k^3(m)&= U_n\left(1-|1-g_k(m)|\right)\qquad\text{and}\qquad U_k^4(m)=\frac{U_n}{1+|1-g_k(m)|}\;.
\end{align*}
\endgroup
\noindent Since from the experimental observations $f_k(m)$ and $g_k(m)$ converge to $1$ as $m$ grows large, the sequence $U_k^i(m)$, for $i=1,2,3$ and $4$, is increasing and $\lim_{m\to\infty}U_k^i(m)=U_n$. If the last inequality in \eqref{eq20} is strict, then one can observe that $v_n(m)\leq U_k^i(m)$ for $m$ large enough. This makes us to believe that each $U_k^i(m)$ provide better upper bound for the relative volume $v_n(m)$ than the upper bound as in \eqref{eq19}. As a sample for the family $W(4,m)$, Figure \ref{fig4} displays the graphs of $U_k^3$ for various values of $k$ along with the graphs of bounds on relative volume as in \eqref{eq19} and the values of relative volume computed using SnapPy \cite{cdgw}.

\begin{figure}[H]
\begin{center}
\includegraphics[width=1\linewidth]{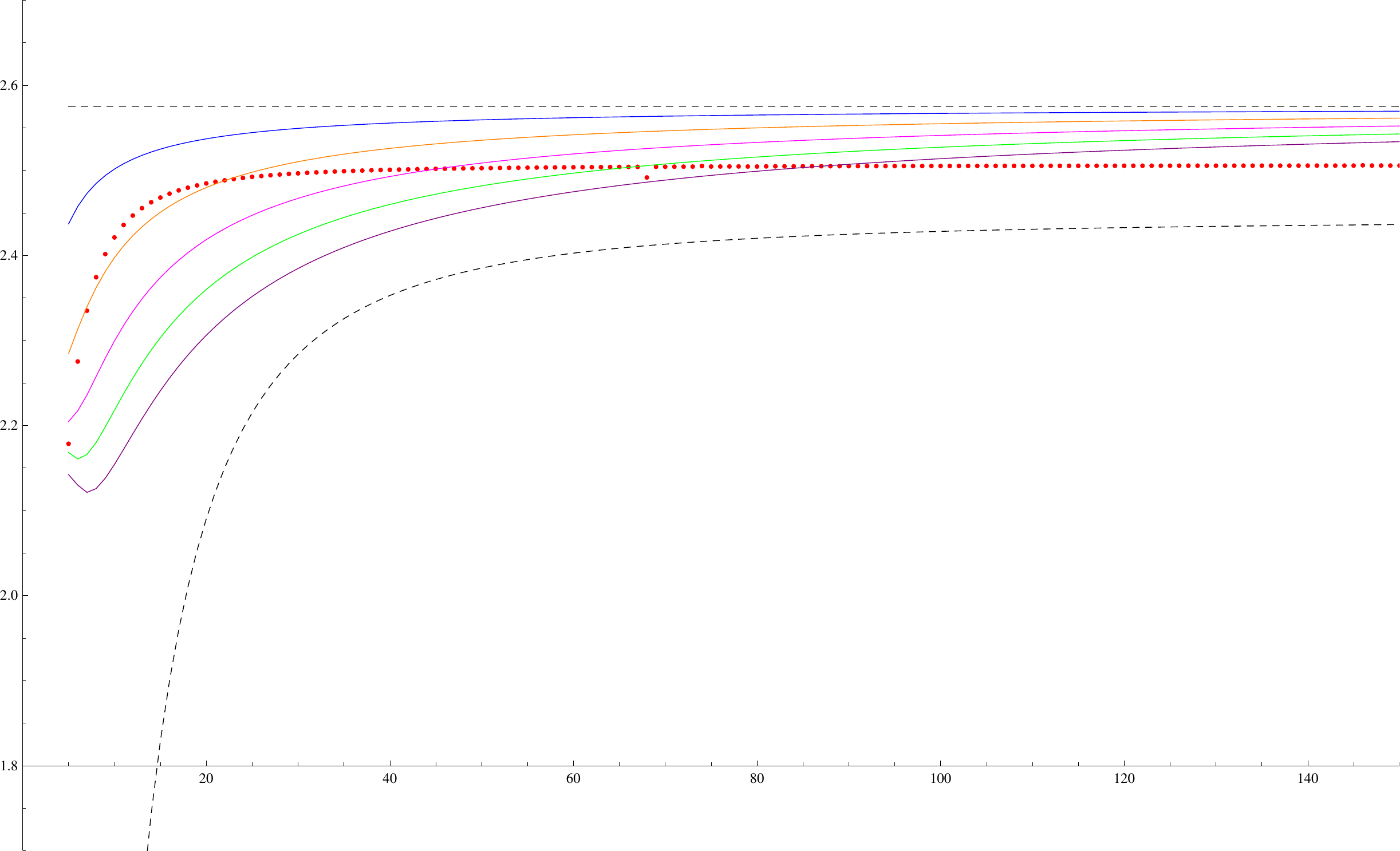}
\caption{Comparing $U_k^3(m)$ with the upper bound on the relative volume for $W(4,m)$}
\label{fig4}
\end{center}
\end{figure}

In Figure \ref{fig4}, the horizontal axis takes the values of $m$, the lower most curve and the top horizontal line represent the lower and upper bounds for the relative volume as in \eqref{eq19}. The dotted curve in red is for the values of relative volume computed using SnapPy \cite{cdgw}. The curves in blue, orange, magenta, green and purple represent the graphs of $U_k^3$ for $k=2,3,4,5$ and $6$ respectively. It is evident from the graphs that as $k$ value is higher, the upper bound for the relative volume gets better.

\begin{remark}
For fixed $n,i$ and $k$, if the first and the last inequalities in \eqref{eq20} are strict, then the functions $L_k^i(m)$ and $U_k^i(m)$ will certainly provide bounds for the relative volume of $W(n+1,m)$ for $m$ large enough. These bounds are better than the bounds in \eqref{eq19}. However, if the first inequality (respectively, the last inequality) in \eqref{eq20} is in fact an equality, then the function $L_k^i(m)$ for any $i$ and $k$ (respectively, the function $U_k^i(m)$ for any $i$ and $k$) may not remain a lower bound (respectively, an upper bound) for the relative volume.
\end{remark}

\section{Khovanov homology} \label{sec6}
 
A weaving knot $W(n+1,m)$ is an alternating knot or a nonsplit alternating link; therefore, its Khovanov homology \cite{k} is supported on two lines and can be determined by the Jones polynomial and the signature (see \cite{l1,l2,s1,s2}). Let us quote the following theorem.

\begin{theorem}[Theorem 1.2 and Theorem 4.5 in \cite{l1}]\label{thm2}
For an alternating knot (or a nonsplit alternating link) $L$, the Khovanov invariants $\mathcal{H}^{i,j}(L)$ are supported in two lines $j=2-\sigma(L)\pm1$, where $\sigma(L)$ is the signature of $L$.
\end{theorem}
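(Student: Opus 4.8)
The plan is to prove the stronger statement that the Khovanov homology of an alternating link is \emph{thin}, i.e.\ that all nonzero groups $\mathcal{H}^{i,j}(L)$ lie on the two adjacent diagonals $j-2i=-\sigma(L)\pm1$, and then simply read off the two-line support. I would fix a reduced alternating diagram $D$ of $L$ and argue by induction on the crossing number $c(D)$. The base cases are the unknot and the $(2,k)$-torus links (including the Hopf link), for which both the homology and the signature are computable by hand and the two diagonals can be identified directly, thereby also fixing the constant $\pm1$ around $-\sigma$.

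For the inductive step I would select a crossing of $D$ and form its two smoothings $D_0$ and $D_1$. Because $D$ is reduced and alternating, one smoothing is again a reduced alternating diagram with one fewer crossing, while the other becomes alternating with fewer crossings after deleting the nugatory crossings it produces (via Reidemeister~I moves, whose grading shifts are explicit and controlled). The inductive hypothesis then applies to both smoothings, and the engine of the argument is the unoriented skein long exact sequence relating $\mathcal{H}(D)$, $\mathcal{H}(D_0)$ and $\mathcal{H}(D_1)$ together with the grading shifts dictated by the smoothing.

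The key computation is to track the diagonal grading $\delta=j-2i$ through these shifts. Using the behavior of the signature of alternating links under smoothing a crossing — extracted from the Gordon--Litherland/Goeritz form of the checkerboard surface — I would show that the two diagonals of $\mathcal{H}(D_0)$ and the two of $\mathcal{H}(D_1)$, after the prescribed shifts, occupy exactly two diagonals in total, and then verify that the connecting homomorphism is an isomorphism on precisely the terms that would otherwise land on a spurious third diagonal. This forces thinness of $\mathcal{H}(D)$. To pin the two surviving diagonals to $-\sigma(L)\pm1$, I would invoke the graded Euler characteristic identity $\sum_{i,j}(-1)^iq^j\dim\mathcal{H}^{i,j}=\hat J_L(q)$: once thinness is known, the alternating-sign property of the Jones polynomial of an alternating link (Kauffman--Murasugi--Thistlethwaite) recovers the homology from the Jones polynomial and signature, and comparison with the base case fixes the offset.

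I expect the main obstacle to be the bookkeeping in the inductive step, where one must simultaneously control the grading shifts from the Reidemeister~I reductions, the signature change across the smoothing, and the injectivity/surjectivity of the connecting map, so that the potential middle diagonal is annihilated. An alternative route that sidesteps part of this is Lee's deformation of the Khovanov differential: one forms the Lee complex, computes its homology to have rank $2^{\#\text{components}}$ with explicitly known $\delta$-gradings, and uses the induced spectral sequence (whose differentials strictly raise $\delta$) to constrain the width. This approach is cleaner for identifying the signature-dependent constant, but it still requires the inductive width bound to conclude that the support is at most two diagonals.
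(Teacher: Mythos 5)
The first thing to note is that the paper does not prove Theorem \ref{thm2} at all: it is quoted verbatim, with attribution, as Theorem 1.2 and Theorem 4.5 of Lee \cite{l1} (``Let us quote the following theorem''), and is then used as a black box to locate the support of the Khovanov homology of weaving knots. So there is no internal proof to compare your attempt against; what you have written is an attempt to reprove Lee's theorem itself. Your outline is, in essence, Lee's own original argument \cite{l1,l2}: induction on the crossing number of a reduced alternating diagram, the unoriented skein long exact sequence relating $D$, $D_0$, $D_1$, and control of the diagonal grading $\delta=j-2i$ via the behavior of the signature under smoothing, with the Lee deformation as the alternative route. (Incidentally, the paper's statement carries a typo --- the lines are $j=2i-\sigma(L)\pm1$, as the paper's own Theorem \ref{thm3} confirms --- and your formulation $j-2i=-\sigma(L)\pm1$ is the correct one.)

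That said, as written your proposal is a plan rather than a proof: every step that makes the theorem hard is asserted, not established. Concretely: (i) the signature relations between $\sigma(L)$, $\sigma(L_0)$ and $\sigma(L_1)$ at a crossing of a reduced alternating diagram are nontrivial lemmas and constitute the technical heart of Lee's induction; (ii) the claim that after the prescribed shifts the supports of $\mathcal{H}(D_0)$ and $\mathcal{H}(D_1)$ occupy only two diagonals, and that ``the connecting homomorphism is an isomorphism'' on any spurious third diagonal, is exactly what must be proven --- in fact, when the signature lemmas are set up correctly the shifted supports align so that exactness alone suffices, and your need to invoke an unverified isomorphism of connecting maps signals that the shift bookkeeping has not actually been carried out; (iii) the induction also needs both smoothings of a reduced alternating diagram to be connected (hence non-split alternating), which is where reducedness is genuinely used and which you do not address; (iv) the remark about Reidemeister I moves having ``explicit and controlled grading shifts'' is off --- Khovanov homology is a link invariant, so removing nugatory crossings changes nothing at the level of homology; shifts occur only at chain level. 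Finally, your plan to pin the two diagonals to $-\sigma(L)\pm1$ afterwards via the graded Euler characteristic is redundant and risks circularity: the inductive step cannot close unless the diagonals are already identified in terms of the signature along the way, which is precisely what item (i) provides. In short, you have correctly reconstructed the architecture of the known proof, but the proposal defers all of its essential content.
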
 

In \cite{ms}, the authors proved the following result regarding the signature of weaving knots.

\begin{proposition} 
For a weaving knot $W(n+1,m)$, the signature is $0$ if $n$ is even and it is $1-m$ if $n$ is odd.
\end{proposition}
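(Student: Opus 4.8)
The plan is to compute $\sigma\left(W(n+1,m)\right)$ directly from a Seifert surface built on the standard alternating diagram, namely the closure of the weaving braid $\left(\sigma_1\sigma_2^{-1}\sigma_3\sigma_4^{-1}\cdots\sigma_n^\delta\right)^m$. First I would apply Seifert's algorithm to this braid closure. Since the braid is coherently oriented, the Seifert circles are exactly the $n+1$ strands, giving disks $R_1,\dots,R_{n+1}$, and the crossings become twisted bands: between $R_i$ and $R_{i+1}$ there are exactly $m$ bands, one per period, all carrying the same half-twist sign $\epsilon_i=(-1)^{i+1}$ (positive for odd $i$, negative for even $i$, with $\epsilon_n=\delta$). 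A basis for $H_1$ of the resulting surface is then given by the $n(m-1)$ loops $x_i^p$ ($1\le i\le n$, $1\le p\le m-1$), where $x_i^p$ runs through the consecutive bands joining $R_i$ to $R_{i+1}$.

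Next I would record the symmetrized Seifert form $G=V+V^{\top}$, whose signature is $\sigma\left(W(n+1,m)\right)$, and exploit its block structure. Organizing the basis column by column (all $x_1^p$, then all $x_2^p$, and so on), two loops $x_i^p$ and $x_{i'}^{p'}$ can link only when their band families meet a common disk, i.e.\ only when $|i-i'|\le 1$; hence $G$ is block tridiagonal. The $i$-th diagonal block is the symmetrized Seifert form of a single twist region of $m$ equal-sign crossings, which is the standard $(m-1)\times(m-1)$ tridiagonal matrix $-\epsilon_i M$, where $M$ has $2$ on the diagonal and $-1$ on the sub- and super-diagonals. The matrix $M$ is positive definite, so the $i$-th diagonal block is negative definite when $\epsilon_i=+1$ and positive definite when $\epsilon_i=-1$. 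The off-diagonal coupling blocks $C_i$ between columns $i$ and $i+1$ (which share the disk $R_{i+1}$) are integer matrices whose precise entries I would not need.

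The signature is then extracted by repeated Schur complementation from the last column inward, and this is where the alternating sign pattern does all the work. Writing $\tilde B_n=-\epsilon_n M$ and, for $i<n$, $\tilde B_i=(-\epsilon_i M)-C_i\,\tilde B_{i+1}^{-1}C_i^{\top}$, congruence invariance of the signature gives $\sigma\left(W(n+1,m)\right)=\sum_{i=1}^{n}\operatorname{sig}(\tilde B_i)$. I would prove by downward induction that $\tilde B_i=-\epsilon_i P_i$ for a positive definite matrix $P_i$: assuming $\tilde B_{i+1}=-\epsilon_{i+1}P_{i+1}$ and using the crucial relation $\epsilon_{i+1}=-\epsilon_i$,
\[
\tilde B_i=-\epsilon_i M+\epsilon_{i+1}\,C_iP_{i+1}^{-1}C_i^{\top}=-\epsilon_i\bigl(M+C_iP_{i+1}^{-1}C_i^{\top}\bigr),
\]
and $M+C_iP_{i+1}^{-1}C_i^{\top}$ is positive definite, being the sum of the positive definite $M$ and the positive semidefinite $C_iP_{i+1}^{-1}C_i^{\top}$. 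Hence each $\tilde B_i$ has the same signature as $-\epsilon_i M$, namely $-\epsilon_i(m-1)$, and therefore
\[
\sigma\left(W(n+1,m)\right)=-(m-1)\sum_{i=1}^{n}\epsilon_i=-(m-1)\sum_{i=1}^{n}(-1)^{i+1}.
\]
The alternating sum equals $0$ when $n$ is even and $1$ when $n$ is odd, giving signature $0$ and $1-m$ respectively.

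The main obstacle is not the signature extraction, which is robust and uses only the sign relation $\epsilon_{i+1}=-\epsilon_i$ together with definiteness of $M$, but the careful verification of the block structure: that each diagonal block really is $-\epsilon_i M$ with the standard positive definite $M$, and that coupling occurs only between consecutive columns. This requires fixing orientation and sign conventions and tracking, on each shared disk $R_{i+1}$, the interleaving of the band feet coming from $\sigma_i$ and $\sigma_{i+1}$ within each period. As a cross-check and an alternative route, one could instead apply the Gordon--Litherland formula to a checkerboard surface of the alternating diagram, where alternation forces the Goeritz form to be definite so that $\operatorname{sig}(G_F)=\pm\operatorname{rank}$ is immediate; this sidesteps the coupling blocks entirely, at the cost of separately evaluating the correction term $\mu(F)$.
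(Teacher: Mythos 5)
Your proof is correct, but there is essentially nothing in this paper to compare it against: the proposition is stated here as a quotation from Mishra--Staffeldt \cite{ms}, and the present paper gives no proof of it. Your Seifert-form computation is therefore a genuinely independent, self-contained route, and its skeleton is sound: the closure of $\left(\sigma_1\sigma_2^{-1}\cdots\sigma_n^{\delta}\right)^m$ is coherently oriented, so Seifert's algorithm yields $n+1$ disks and $mn$ bands, giving $b_1=n(m-1)$ with exactly the column-by-column basis you describe; loops in columns $i$ and $i'$ with $|i-i'|\geq 2$ have disjoint projections and hence linking number zero, which is what justifies the block tridiagonal shape; each diagonal block is the symmetrized Seifert form of a $(2,m)$ twist region, namely $-\epsilon_i M$ with $M$ the positive definite Cartan-type tridiagonal matrix (any ambiguity in loop orientations only conjugates the block by a diagonal sign matrix, so definiteness is unaffected); and the Haynsworth/Schur-complement recursion is valid because each complement $\tilde B_{i+1}$ is $\pm$ definite, hence invertible. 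The sign conventions check out against $W(2,3)$ (right trefoil, signature $-2=1-3$), $W(3,2)$ (figure eight, signature $0$) and the Khovanov support lines for $W(4,3)$ recorded in Section \ref{sec6}. What your approach buys beyond the citation: it treats all $n$ and $m$ uniformly, including the multi-component link cases where $\gcd(n+1,m)>1$, and it isolates precisely where alternation of the braid word enters, namely the single relation $\epsilon_{i+1}=-\epsilon_i$ inside the Schur step. The one place requiring real care is the one you flagged yourself --- verifying the tridiagonal structure and identifying the diagonal blocks --- but both are standard facts about band surfaces of coherently oriented braid closures and do hold, so this is a verification burden rather than a gap; your Gordon--Litherland alternative via a definite Goeritz form of the alternating diagram is also viable and is closer in spirit to the treatment one finds in the literature on alternating links.
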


Using this proposition, the direct application of Theorem \ref{thm2} will provide the following:

\begin{theorem}[Theorem 2.6 in \cite{ms}]\label{thm3}
For a weaving knot $W(n+1,m)$, the non-vanishing Khovanov homology $\mathcal{H}^{i,j}\!\left(W(n+1,m)\right)$ lies on the lines $j=2i\pm 1$ if $n$ is even and it lies on the lines $ j=2i+m-1\pm1$ if $n$ is odd.
\end{theorem}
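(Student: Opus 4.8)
The plan is to obtain the statement as an immediate specialization of Theorem \ref{thm2} once the signature of $W(n+1,m)$ has been substituted. First I would note that, as recorded at the opening of this section, every weaving knot $W(n+1,m)$ is an alternating knot or a nonsplit alternating link, so the hypotheses of Theorem \ref{thm2} are satisfied. That theorem then tells us that the non-vanishing Khovanov groups $\mathcal{H}^{i,j}\!\left(W(n+1,m)\right)$ are concentrated on the two diagonals determined by the signature $\sigma=\sigma\!\left(W(n+1,m)\right)$, namely $j=2i-\sigma\pm1$.

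Next I would invoke the signature computation recalled from \cite{ms} just above, which gives $\sigma=0$ when $n$ is even and $\sigma=1-m$ when $n$ is odd. Substituting these values into $j=2i-\sigma\pm1$ completes the proof. For $n$ even one gets $j=2i\pm1$ at once. For $n$ odd one gets $j=2i-(1-m)\pm1=2i+(m-1)\pm1$, which is precisely the pair of lines asserted in the theorem.

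The argument carries no real obstacle: all of the substance lies in Lee's two-line theorem for alternating links (Theorem \ref{thm2}) and in the signature formula, both of which are available to us. The only point demanding a little care is the sign bookkeeping in the odd case, where the signature is negative; writing $-\sigma=m-1$ makes it transparent that subtracting $\sigma=1-m$ produces an upward shift by $m-1$ rather than a downward one. I would also remark, for completeness, that in the link case one should confirm that $W(n+1,m)$ is a \emph{nonsplit} alternating link so that Theorem \ref{thm2} applies there as well as in the knot case; this is immediate from the weaving braid presentation, whose closure is connected.
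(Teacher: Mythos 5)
Your proof is correct and is precisely the paper's argument: the paper obtains Theorem \ref{thm3} as a direct application of Theorem \ref{thm2} (whose displayed line $j=2-\sigma(L)\pm1$ is a typo for Lee's $j=2i-\sigma(L)\pm1$, the form you correctly used) together with the signature proposition quoted from \cite{ms}. One small correction to your closing remark: when $\gcd(n+1,m)>1$ the closure of the weaving braid is a multi-component link and hence not connected as a space, so nonsplitness should instead be justified by connectedness of the alternating \emph{diagram} (every generator $\sigma_1,\dots,\sigma_n$ occurs in the braid word), which suffices for alternating links.
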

 
For an alternating knot $L$, using the results in \cite[Section 1]{l1} along with the discussion in \cite[Section 6]{ms}, we can obtain the for the Khovanov polynomial $Kh(L)$ as follows:
\begin{equation}\label{eq21}
Kh(L)(t,Q)=Q^{-\sigma(L)}\left(Q^{-1}+Q\right)+\frac{Q^{-1}+tQ^3}{1+tQ^2}\left((-t)^{\frac{\sigma(L)}{2}}V_L\!\left(-tQ^2\right)-Q^{-\sigma(L)}\right)
\end{equation}
\noindent where $\sigma(L)$ and $V_L$ are the signature and the Jones polynomial of $L$ respectively. Note that the rational function $\frac{Q^{-1}+tQ^3}{1+tQ^2}$ evaluated at $t=t^{-1}$ and $Q=Q^{-1}$ is equal to itself. Let $L^\ast$ be the mirror image of $L$. Since $\sigma(L^\ast)=-\sigma(L)$ and $V_{L^\ast}(t)=V_L\!\left(t^{-1}\right)$, we have $Kh(L^\ast)(t,Q)=Kh(L)\!\left(t^{-1},Q^{-1}\right)$. Thus, $\text{rank}\;\mathcal{H}^{i,j}(L^\ast)=\text{rank}\;\mathcal{H}^{-i,-j}(L)$ for all $i$ and $j$. This suggest that the entries in the table of Khovanov ranks (the table of ranks of the Khovanov homology groups) for $L$ along the line $j=2i-\sigma(L)+1$ (respectively along the line $j=2i-\sigma(L)-1$) are same as the entries (read in the reverse direction) in the table of Khovanov ranks for $L^\ast$ along the line $j=2i-\sigma(L^\ast)-1$ (respectively along the line $j=2i-\sigma(L^\ast)+1$).

Let $m$ and $n$ be such that $\gcd(n+1,m)=1$. Then the weaving knot $W(n+1,m)$ is a link with one component, i.e. it is an alternating knot. Following the discussion just after \eqref{eq15}, we can obtain $V_{W(n+1,m)}(-tQ^2)$ by substituting $q=-tQ^2$, $z=\frac{-1}{1-tQ^2}$ and $w=\frac{-t^2Q^4}{1-tQ^2}$ in \eqref{eq11}. Using the obtained for $V_{W(n+1,m)}(-tQ^2)$, the equation \eqref{eq21}, in case of the weaving knot $W(n+1,m)$, takes the following form:
\begin{equation}\label{eq22}
Kh(W(n+1,m))(t,Q)=Q^{-\sigma}\left(Q^{-1}+Q\right)+\frac{Q^{-1}+tQ^3}{1+tQ^2}F(W(n+1,m))(t,Q)
\end{equation}
\noindent where
\begin{equation}\label{eq23}
F(W(n+1,m))(t,Q)=(-1)^{r+d}t^{-r}Q^{e-n}\left(1-tQ^2\right)^nTr(\rho(\sigma_{n+1,m}))-Q^{-\sigma}
\end{equation}
\noindent Here $d=\frac{1-(-1)^n}{2}$, $r=\frac{n-d}{2}$, $e=md$ is the exponent sum of $\sigma_{n+1,m}$ as a word in $\sigma_i$'s, $\sigma=(1-m)d$ is the signature of $W(n+1,m)$, and $Tr(\rho(\sigma_{n+1,m}))$ is evaluated at $q=-tQ^2$ and $z=\frac{-1}{1-tQ^2}$. Using \eqref{eq22}, we have the following Mathematica program to compute the Khovanov ranks (the ranks of the Khovanov homology groups) for weaving knots $W(n+1,m)$ with $\gcd(n+1,m)=1$. The program works along with the Mathematica program for the trace discussed in Section \ref{sec3}.\vskip5mm
\noindent\hspace{-3pt}\includegraphics[width=1.003\linewidth]{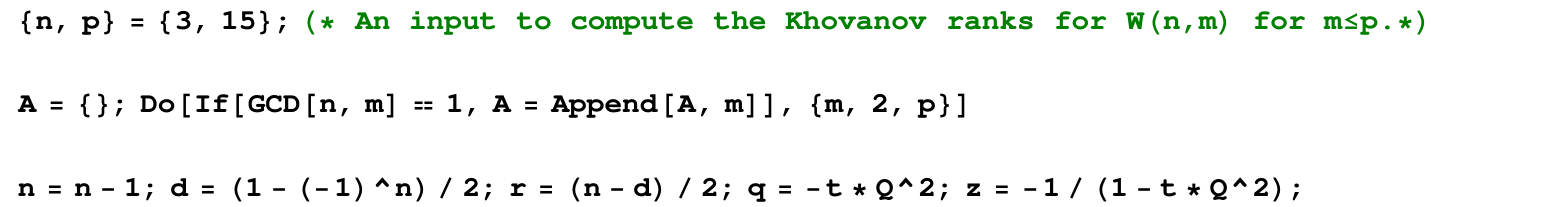}\vskip3mm
\noindent\hspace{-3pt}\includegraphics[width=1.003\linewidth]{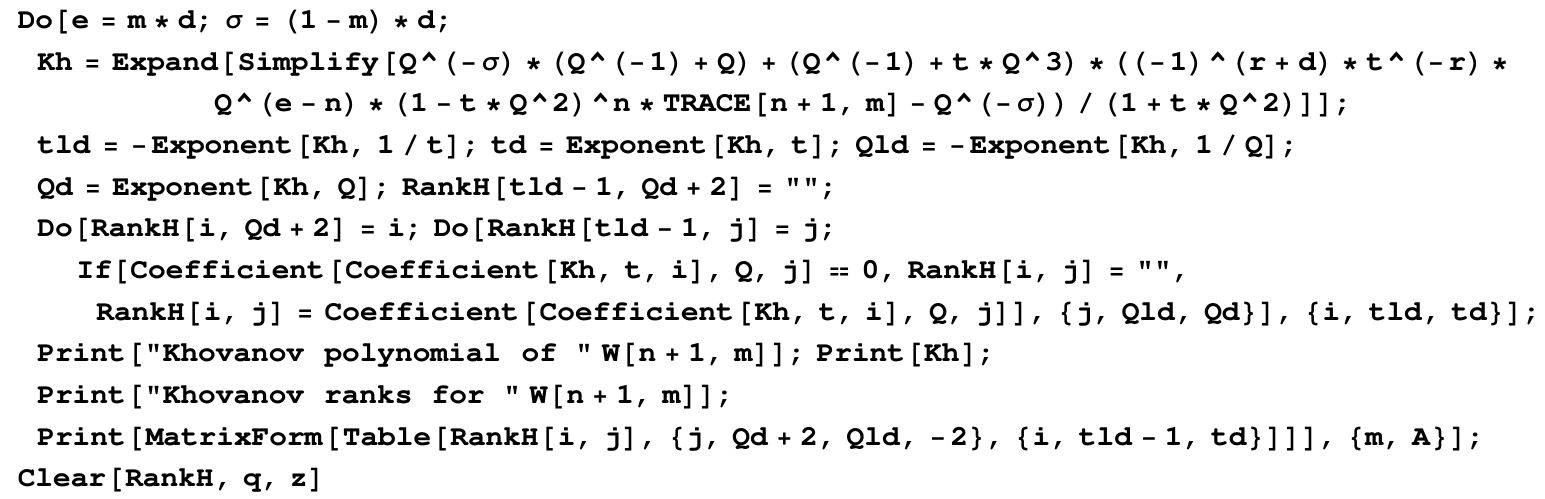}\vskip3mm
\noindent Below are tables of Khovanov ranks for some weaving knots computed using this program.

{\small
\begin{table}[H]
\begin{center}
\caption{Khovanov ranks for $W(4,3)$}
\(\begin{array}{rccccccccccc}\hline
& i & -3 & -2 & -1 & 0 & 1 & 2 & 3 & 4 & 5 & 6 \\
j &  &  &  &  &  &  &  &  &  &  & \\ \hline
15 &  &  &  &  &  &  &  &  &  &  & 1 \\
13 &  &  &  &  &  &  &  &  &  & 3 &  \\
11 &  &  &  &  &  &  &  &  & 5 & 1 &  \\
9 &  &  &  &  &  &  &  & 6 & 3 &  &  \\
7 &  &  &  &  &  &  & 7 & 5 &  &  &  \\
5 &  &  &  &  &  & 6 & 6 &  &  &  &  \\
3 &  &  &  &  & 5 & 7 &  &  &  &  &  \\
1 &  &  &  & 4 & 7 &  &  &  &  &  &  \\
-1 &  &  & 1 & 4 &  &  &  &  &  &  &  \\
-3 &  &  & 4 &  &  &  &  &  &  &  &  \\
-5 &  & 1 &  &  &  &  &  &  &  &  & \\ \hline
\end{array}\)\\
{\tiny Note: For the rank of $\mathcal{H}^{k,l}(W(4,3))$, look at the cell for which $i=k$ and $j=l$.}
\end{center}
\end{table}}

{\small
\begin{table}[H]
\begin{center}
\caption{Khovanov ranks for $W(5,3)$}
\(\begin{array}{rcccccccccccccc}\hline
& i & -6 & -5 & -4 & -3 & -2 & -1 & 0 & 1 & 2 & 3 & 4 & 5 & 6 \\
j &  &  &  &  &  &  &  &  &  &  &  &  &  & \\ \hline
13 &  &  &  &  &  &  &  &  &  &  &  &  &  & 1 \\
11 &  &  &  &  &  &  &  &  &  &  &  &  & 5 &  \\
9 &  &  &  &  &  &  &  &  &  &  &  & 11 & 1 &  \\
7 &  &  &  &  &  &  &  &  &  &  & 19 & 5 &  &  \\
5 &  &  &  &  &  &  &  &  &  & 25 & 11 &  &  &  \\
3 &  &  &  &  &  &  &  &  & 29 & 19 &  &  &  &  \\
1 &  &  &  &  &  &  &  & 30 & 25 &  &  &  &  &  \\
-1 &  &  &  &  &  &  & 25 & 30 &  &  &  &  &  &  \\
-3 &  &  &  &  &  & 19 & 29 &  &  &  &  &  &  &  \\
-5 &  &  &  &  & 11 & 25 &  &  &  &  &  &  &  &  \\
-7 &  &  &  & 5 & 19 &  &  &  &  &  &  &  &  &  \\
-9 &  &  & 1 & 11 &  &  &  &  &  &  &  &  &  &  \\
-11 &  &  & 5 &  &  &  &  &  &  &  &  &  &  &  \\
-13 &  & 1 &  &  &  &  &  &  &  &  &  &  &  & \\ \hline
\end{array}\)\\
{\tiny Note: For the rank of $\mathcal{H}^{k,l}(W(5,3))$, look at the cell for which $i=k$ and $j=l$.}
\end{center}
\end{table}}

In \cite[Section 7]{ms}, the authors have already pointed out that once we know the Khovanov ranks (the ranks of the Khovanov homology groups) for alternating knots, we can have the complete information of their integral Khovanov homology using the results of Lee \cite{l1,l2} and Shumakovitch \cite{s1,s2}. With this in mind, we can compute the integral Khovanov homology of a weaving knot $W(n+1,m)$ with $\gcd(n+1,m)=1$, since it is an alternating knot and we have \eqref{eq22} (in fact, we have a Mathematica program) to compute the Khovanov ranks for this type of a weaving knot.

For the rational Khovanov homology of a weaving knot $W(n+1,m)$ with $\gcd(n+1,m)=1$, we take advantage of the\;{\it \textquotedblleft knight move\textquotedblright\;rule} (see \cite[Section 7]{ms} and \cite[Section 1]{s2}) and simplify by recording the Khovanov ranks from only along the line $j=2i+(m-1)d+1$, where $d=\frac{1-(-1)^n}{2}$. In order to study the asymptotic behavior of the Khovanov homology, we have to normalize the data. This is done by computing the total rank (the sum of Khovanov ranks) along the line $j=2i+(m-1)d+1$ and dividing each rank by the total rank. This way, we obtain normalized ranks that sum to one. This raises the possibility of approximating the distribution of normalized ranks by a probability distribution. For our baseline experiments we choose to use the normal distribution $N(\mu,\sigma^2)$. The probability density function for the normal distribution is
\begin{equation}
f_{\mu,\sigma}(x)=\frac{1}{\sigma\sqrt{2\pi}}\exp\left(-\frac{\left(x-\mu\right)^2}{2\;\!\sigma^2}\right)
\end{equation}

We compute the values of the mean $\mu$ and the standard deviation $\sigma$ from our data. For a fixed $n$, we regard $X_m=W(n+1,m)$ as a countable family of random variables for $m\geq2$ with $\gcd(n+1,m)=1$. For each $m$, we do as follows: For a particular $i$, let $p_i$ denote the normalized rank of $\mathcal{H}^{i,j}(W(n+1,m))$ for $j=2i+(m-1)d+1$, where $d=\frac{1-(-1)^n}{2}$. From this data we find the mean $\mu_m$ using the formula $\mu_m=\Sigma_{i}\,i\,p_i$\, (see (2-13) in \cite{hmgb}). After knowing the value of $\mu_m$ we find the standard deviation $\sigma_m$ using the formula ${\sigma_m}\!^2=\Sigma_{i}\,i^2\,p_i-{\mu_m}\!^2$\, (see (2-16) in \cite{hmgb}). We use these values of $\mu_m$ and $\sigma_m$ and consider the density function
\begin{equation}
f_{\mu_m,\sigma_m}(x)=\frac{1}{\sigma_m\sqrt{2\pi}}\exp\left(-\frac{\left(x-\mu_m\right)^2}{2\;\!{\sigma_m}\!^2}\right)
\end{equation}

For few values of $n$ and $m$, we plot below (see Figure \ref{fig8} to Figure \ref{fig9}) the graph of the density function $f_{\mu_m,\sigma_m}$ together with the normalized Khovanov ranks $p_i$ for a weaving knot $W(n+1,m)$ along the line $j=2i+(m-1)d+1$. In each of the figure, the density function is plotted in blue curve and the normalized ranks are plotted in red dots.

\begin{figure}[H]
\begin{center}
\includegraphics[width=1\linewidth]{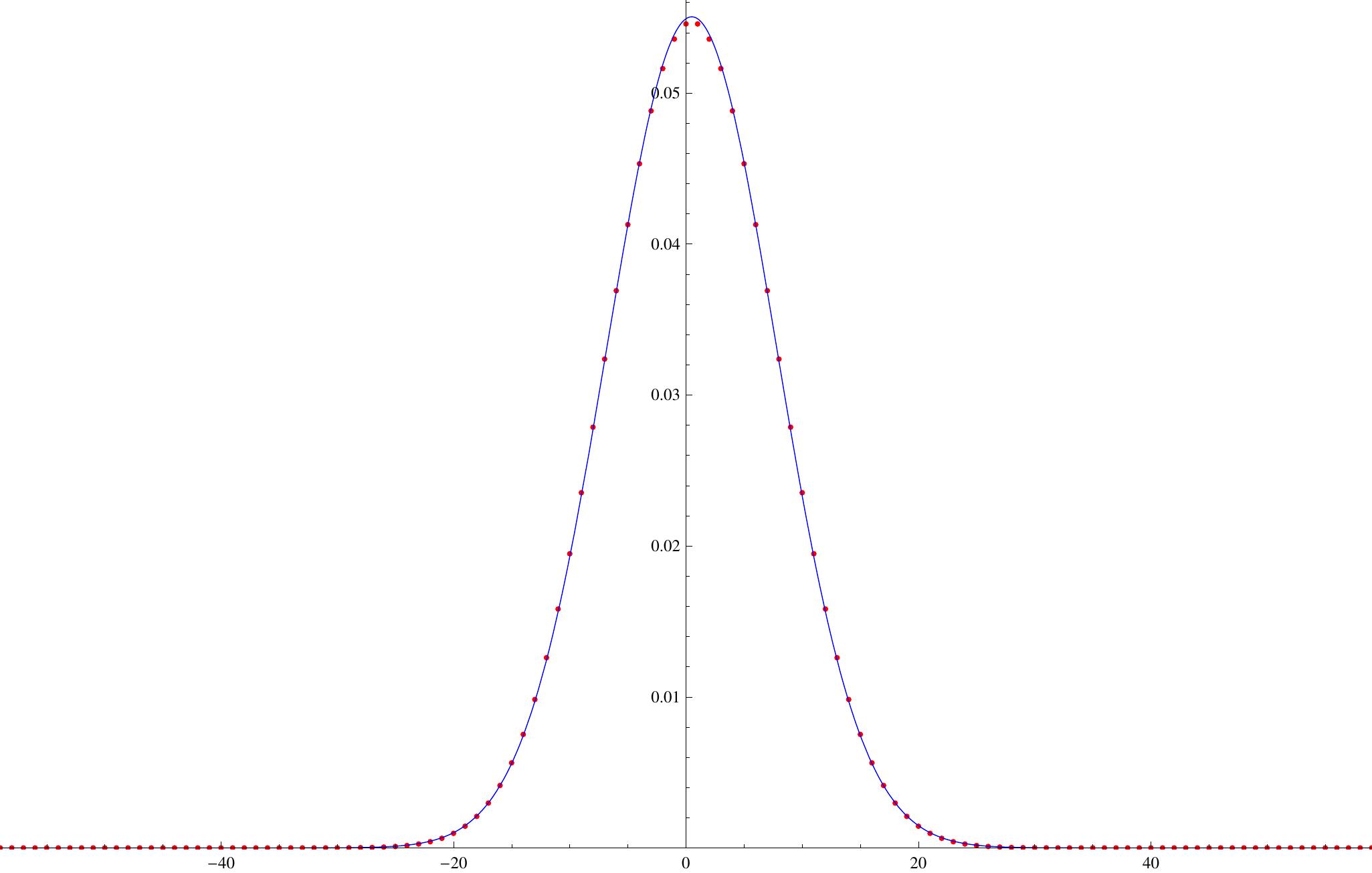}
\caption{The density function together with the normalized ranks for $W(3,59)$}
\label{fig8}
\end{center}
\end{figure}

\begin{figure}[H]
\begin{center}
\includegraphics[width=1\linewidth]{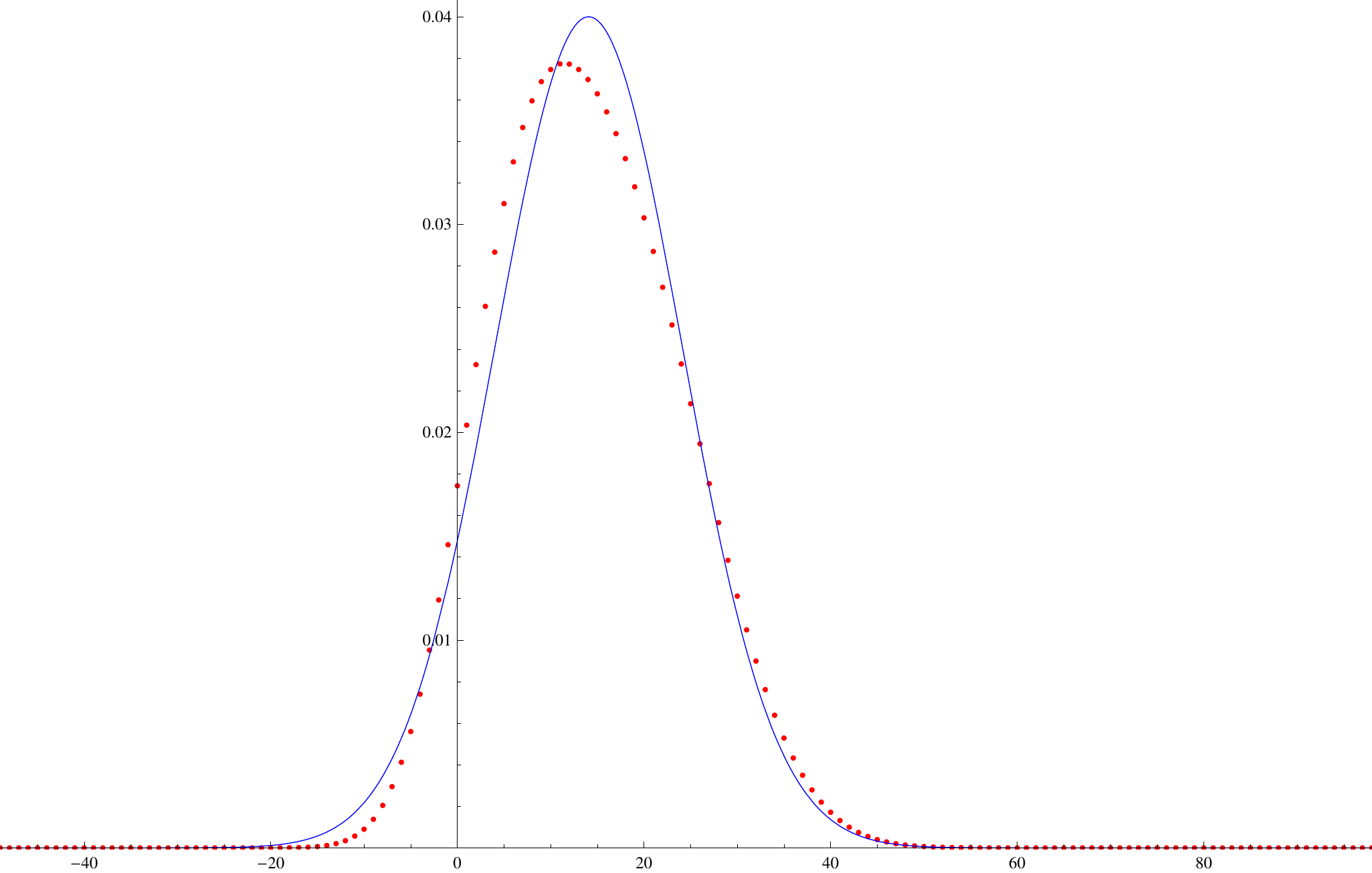}
\caption{The density function together with the normalized ranks for $W(4,49)$}
\end{center}
\end{figure}

\begin{figure}[H]
\begin{center}
\includegraphics[width=1\linewidth]{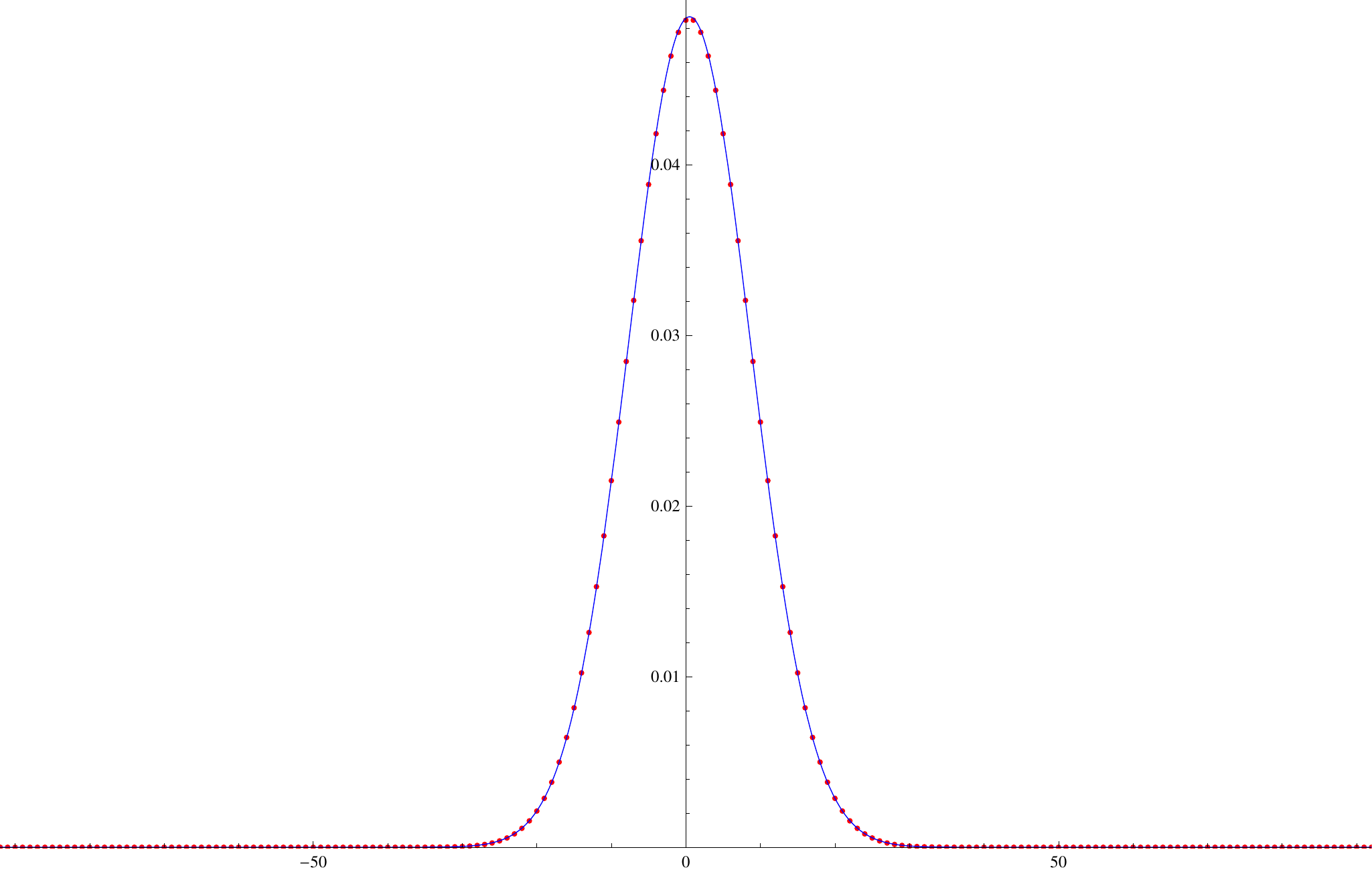}
\caption{The density function together with the normalized ranks for $W(5,46)$}
\end{center}
\end{figure}

\begin{figure}[H]
\begin{center}
\includegraphics[width=1\linewidth]{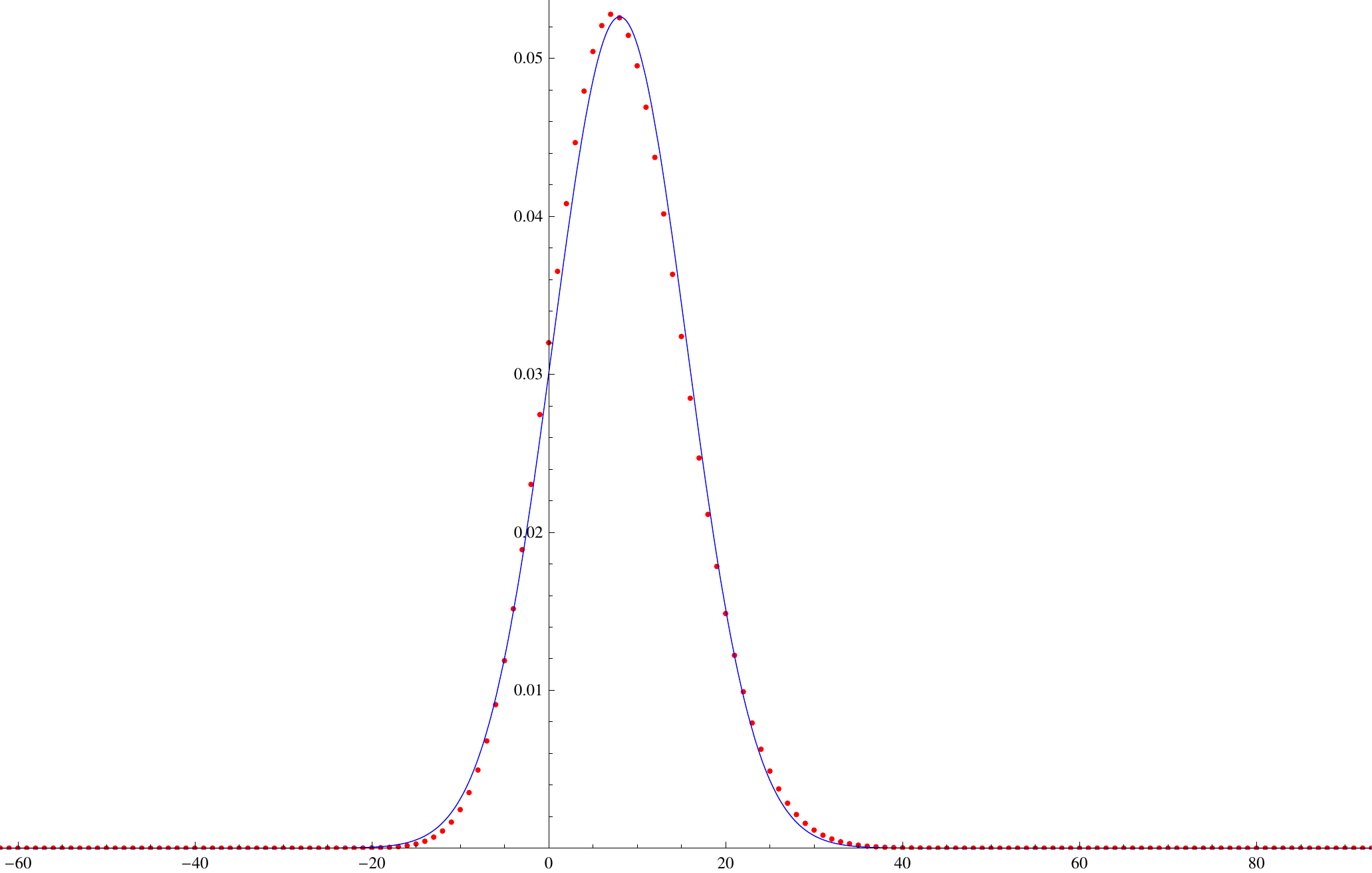}
\caption{The density function together with the normalized ranks for $W(6,31)$}
\label{fig9}
\end{center}
\end{figure}\vskip8mm

By looking at the figures above (Figure \ref{fig8} to Figure \ref{fig9}), it can be seen that, for a fixed $n$, the normalized Khovanov ranks for $W(n+1,m)$ approach to a normal distribution as $m$ grows large. For $n$ even this is very clear, but for $n$ odd, there seems to be some deviation of the normalized ranks from the density function. In the tables at the end (Table \ref{tb3} to Table \ref{tb4}), we have presented a data for the families $W(3,m)$, $W(4,m)$, $W(5,m)$ and $W(6,m)$ showing the $L^1$- and $L^2$-deviations (the $L^1$- and $L^2$-comparisons) of the normalized Khovanov ranks from the density function $f_{\mu_m,\sigma_m}$, where the $L^1$- and $L^2$-deviations are defined similarly as in \cite[Section 8]{ms}. The data provides sufficient evidence to conjecture the following.
\begin{conjecture}
For a fixed even integer $n$, the normalized Khovanov ranks for $W(n+1,m)$ along any of the two lines as in Theorem \ref{thm3} approach to a normal distribution as $m$ (being coprime to $n+1$) grows large.
\end{conjecture}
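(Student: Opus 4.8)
The plan is to reduce the conjecture to a central-limit statement about the coefficients of the Jones polynomial and then to prove that statement using the transfer-matrix structure already present in Theorem~\ref{thm1}. First I would use Khovanov-thinness. For $n$ even the signature $\sigma(W(n+1,m))$ vanishes by the signature proposition quoted above, so Theorem~\ref{thm3} places the nonvanishing groups on the two lines $j=2i\pm1$; by the knight-move reduction used in the text it suffices to work along $j=2i+1$, and there the rank at cohomological degree $i$ equals, up to bounded boundary corrections from the $Q^{-1}+Q$ term in \eqref{eq22}, the absolute value of the coefficient of the corresponding power of $t$ in $V_{W(n+1,m)}(t)$. Because $W(n+1,m)$ is a reduced alternating link its Jones coefficients alternate in sign (Thistlethwaite), so these absolute values are recovered exactly by the substitution $t\mapsto -x$: the polynomial $\widetilde V_m(x):=V_{W(n+1,m)}(-x)$ has nonnegative coefficients equal to the Khovanov ranks. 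The conjecture thus becomes the assertion that the coefficient sequence of $\widetilde V_m$, normalized by $\widetilde V_m(1)$ to a probability mass function in $i$, converges to a Gaussian as $m\to\infty$ through $m$ coprime to $n+1$.

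Next I would build the transfer matrix. Since $\sigma_{n+1,m}=\sigma_{n+1,1}^{\,m}$, we have $\rho(\sigma_{n+1,m})=A^m$ for $A=\rho(\sigma_{n+1,1})$; writing $A$ in the basis $\mathcal{B}_n$ and letting $M$ denote left-multiplication by $A$ on $H_{n+1}(q)$, the coefficient vector of $\rho(\sigma_{n+1,m})$ is $M^{m-1}\vec v_A$, and composing with the linear trace functional of Proposition~\ref{prop4} gives $Tr(\rho(\sigma_{n+1,m}))=\tau\!\left(M^{m-1}\vec v_A\right)$. Carrying out the Jones substitution $q=t$, $z=\tfrac{-1}{1+t}$ of \eqref{eq11} and then the gauge $t=-x$ turns this into an explicit extraction of $\widetilde V_m(x)$ from the $m$-th power of a matrix $\widetilde M(x)$ with entries in $\mathbb{Z}[x^{\pm1/2}]$. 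I would then seek a dominant eigenvalue $\Lambda(x)$ of $\widetilde M(x)$, analytic for $x$ in a neighbourhood of $1$, so that $\widetilde V_m(x)=c(x)\,\Lambda(x)^m\bigl(1+o(1)\bigr)$ uniformly there.

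The analytic engine is a quasi-powers and local-limit argument. Given such a uniform estimate, the quasi-powers theorem of Hwang (the supercritical sequence schema of analytic combinatorics) yields a central limit theorem for the normalized coefficients, with mean $\mu_m\sim m\,\Lambda'(1)/\Lambda(1)$ and variance $\sigma_m^2\sim m\,\tfrac{d^2}{ds^2}\log\Lambda(e^s)\big|_{s=0}$; these match the moment formulas $\mu_m=\sum_i i\,p_i$ and $\sigma_m^2=\sum_i i^2 p_i-\mu_m^2$ used to fit $f_{\mu_m,\sigma_m}$ in the figures. Since the plots compare the pointwise masses $p_i$ against the density rather than the distribution function, I would upgrade this to a local limit theorem, which follows from the same eigenvalue asymptotics once an aperiodicity (gcd of admissible degree steps) condition is checked against the top- and bottom-degree data of Proposition~\ref{prop5}.

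The hard part will be the spectral hypotheses, and this is where I expect the whole argument to live or die. The matrix $\widetilde M(x)$ is \emph{a priori} not nonnegative: although the sign twist $t\mapsto -x$ forces the \emph{scalar} $\widetilde V_m(x)$ to have nonnegative coefficients, the individual matrix entries need not become positive, so Perron--Frobenius does not apply off the shelf. I would try either to find a diagonal conjugation making $\widetilde M(x)$ genuinely positive for $x>0$, or failing that to isolate $\Lambda(x)$ and prove a spectral gap directly from the explicit leading-term estimates underlying Proposition~\ref{prop5}. The second obstacle is the variability condition: one must show $\tfrac{d^2}{ds^2}\log\Lambda(e^s)\big|_{s=0}>0$, for a vanishing second derivative would collapse the limit law to a point mass instead of a Gaussian. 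I expect strict positivity here to follow from the quadratic-in-$m$ growth of the support (the span $mn$ of the Jones polynomial), and I anticipate that the parity hypothesis enters exactly at this point: for even $n$ the signature is $0$ and the gauge $t\mapsto -x$ is clean, whereas for odd $n$ the signature shift $\sigma=1-m$ perturbs the leading eigenvalue and plausibly accounts for the deviation from normality noted in the text.
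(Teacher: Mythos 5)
The statement you are trying to prove is not proved in the paper at all: it is offered purely as a conjecture, supported only by the numerical evidence in Tables \ref{tb3}--\ref{tb4} (the $L^1$- and $L^2$-deviations of the normalized ranks from the fitted Gaussian appearing to tend to $0$). So there is no proof of the paper's to compare yours against, and the real question is whether your proposal constitutes a proof on its own. It does not. Your first reduction step is sound and consistent with the paper's own machinery: by Theorem \ref{thm3} and \eqref{eq21}--\eqref{eq22}, for $n$ even (signature $0$) the ranks along $j=2i+1$ are determined by the Jones coefficients, and Thistlethwaite's alternating-sign theorem lets you pass to a polynomial $\widetilde V_m(x)=V_{W(n+1,m)}(-x)$ with nonnegative coefficients. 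Likewise the transfer-matrix setup is genuine: $\rho(\sigma_{n+1,m})=A^m$ with $A=\rho(\sigma_{n+1,1})$ acting on the finite-dimensional algebra $H_{n+1}(q)$, and the trace is a fixed linear functional of $A^m$, which is exactly the structure behind the recursion \eqref{eq8}.

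The gap is that everything after that is hypothesis, not argument, and the hypotheses are precisely the hard content of the conjecture. You assume the existence of a dominant eigenvalue $\Lambda(x)$ of $\widetilde M(x)$, analytic and simple in a neighbourhood of $x=1$, with a uniform spectral gap so that $\widetilde V_m(x)=c(x)\Lambda(x)^m(1+o(1))$; you assume the variance condition $\tfrac{d^2}{ds^2}\log\Lambda(e^s)\big|_{s=0}>0$; and you assume the aperiodicity needed to upgrade the central limit theorem to the local limit theorem (which is what you actually need, since the conjecture compares pointwise masses $p_i$ with a density). You yourself note that Perron--Frobenius does not apply because $\widetilde M(x)$ need not be (conjugate to) a nonnegative matrix, and you offer no substitute: no construction of the conjugating diagonal matrix, no eigenvalue separation estimate, and no verification that the quadratic growth of the span forces positive variance of the limiting law rather than, say, a degenerate or non-Gaussian profile. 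Until at least the spectral gap and variability claims are established for the explicit matrix coming from $H_{n+1}(q)$ under the Jones substitution $q=t$, $z=-1/(1+t)$, your proposal is a plausible research programme --- arguably a good one, and more than the paper attempts --- but it is not a proof, and the conjecture remains open.
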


\subsection*{Acknowledgements}

The second author is thankful to Bhaskaracharya Pratishthana, Pune, India for the postdoctoral fellowship during this research work.

\normalsize

\begin{table}[H]
\caption{Values of $f_k(m)$ for $W(3,m)$}
\label{tb1}
\begin{center}
\(\begin{array}{rlllllll} \hline
\multicolumn{1}{c}{} & \multicolumn{1}{c}{k} & \multicolumn{1}{c}{2} & \multicolumn{1}{c}{3} & \multicolumn{1}{c}{4} & \multicolumn{1}{c}{5} & \multicolumn{1}{c}{6} & \multicolumn{1}{c}{7} \\
m &  &  &  &  &  &  & \\ \hline

%22 &  & 0.954545 & 0.880165 & 0.794515 & 0.711017 & 0.637065 & 0.575213 \\
35 &  & 0.971429 & 0.920816 & 0.855883 & 0.784009 & 0.711201 & 0.641677 \\
48 &  & 0.979167 & 0.940972 & 0.889703 & 0.829829 & 0.765494 & 0.70017 \\

100 &  & 0.99 & 0.9708 & 0.943446 & 0.909155 & 0.869239 & 0.825034 \\
113 &  & 0.99115 & 0.974078 & 0.949606 & 0.918705 & 0.882434 & 0.841892 \\
126 &  & 0.992063 & 0.976694 & 0.954559 & 0.926446 & 0.89323 & 0.85583 \\

178 &  & 0.994382 & 0.983399 & 0.967387 & 0.946757 & 0.921974 & 0.893543 \\
191 &  & 0.994764 & 0.984512 & 0.969538 & 0.950198 & 0.926901 & 0.900094 \\
256 &  & 0.996094 & 0.988403 & 0.977093 & 0.962366 & 0.944458 & 0.923629 \\

282 &  & 0.996454 & 0.989462 & 0.979161 & 0.965718 & 0.949329 & 0.930213 \\
347 &  & 0.997118 & 0.991421 & 0.982998 & 0.971962 & 0.958444 & 0.942592 \\
360 &  & 0.997222 & 0.991728 & 0.983602 & 0.972948 & 0.959887 & 0.94456 \\

412 &  & 0.997573 & 0.992766 & 0.985642 & 0.976283 & 0.964781 & 0.951245 \\
425 &  & 0.997647 & 0.992985 & 0.986075 & 0.976992 & 0.965824 & 0.952673 \\
438 &  & 0.997717 & 0.993192 & 0.986483 & 0.97766 & 0.966807 & 0.954018 \\

490 &  & 0.997959 & 0.993911 & 0.9879 & 0.979985 & 0.970231 & 0.958715 \\
503 &  & 0.998012 & 0.994067 & 0.988209 & 0.980492 & 0.970979 & 0.959744 \\
516 &  & 0.998062 & 0.994216 & 0.988503 & 0.980974 & 0.971691 & 0.960722 \\ \hline
\end{array}\)\\
{\tiny Note: For $1\leq k\leq8$, the values of $f_k(m)$ seem to approach $1$ as $m$ grows large.}
\end{center}
\end{table}

\begin{table}[H]
\caption{Values of $f_k(m)$ for $W(4,m)$}
\begin{center}
\(\begin{array}{rlllllll} \hline
\multicolumn{1}{c}{} & \multicolumn{1}{c}{k} & \multicolumn{1}{c}{2} & \multicolumn{1}{c}{3} & \multicolumn{1}{c}{4} & \multicolumn{1}{c}{5} & \multicolumn{1}{c}{6} & \multicolumn{1}{c}{7} \\
m &  &  &  &  &  &  & \\ \hline

%6 &  & 0.911111 & 0.725309 & 0.550109 & 0.416199 & 0.320513 & 0.249825 \\
15 &  & 0.961778 & 0.86242 & 0.722851 & 0.572935 & 0.434695 & 0.318976 \\
24 &  & 0.975694 & 0.90977 & 0.80967 & 0.691208 & 0.569055 & 0.453774 \\

78 &  & 0.992373 & 0.970744 & 0.935063 & 0.887946 & 0.832296 & 0.77064 \\
87 &  & 0.993156 & 0.97371 & 0.941511 & 0.898782 & 0.847997 & 0.791301 \\
105 &  & 0.994322 & 0.978141 & 0.951205 & 0.915196 & 0.871996 & 0.82322 \\

150 &  & 0.996018 & 0.984622 & 0.965504 & 0.939674 & 0.908264 & 0.872211 \\
168 &  & 0.996443 & 0.986253 & 0.969124 & 0.94592 & 0.917609 & 0.884979 \\
177 &  & 0.996623 & 0.986945 & 0.970663 & 0.948583 & 0.921603 & 0.890454 \\

231 &  & 0.99741 & 0.989974 & 0.977419 & 0.960309 & 0.939272 & 0.914799 \\
240 &  & 0.997507 & 0.990347 & 0.978254 & 0.961762 & 0.941471 & 0.917843 \\
249 &  & 0.997597 & 0.990694 & 0.979029 & 0.963113 & 0.943516 & 0.920677 \\

294 &  & 0.997964 & 0.99211 & 0.982201 & 0.968651 & 0.951919 & 0.932348 \\
303 &  & 0.998024 & 0.992343 & 0.982724 & 0.969565 & 0.953308 & 0.934282 \\
321 &  & 0.998135 & 0.99277 & 0.983682 & 0.971242 & 0.955859 & 0.937837 \\

366 &  & 0.998364 & 0.993655 & 0.98567 & 0.974723 & 0.961163 & 0.945242 \\
375 &  & 0.998403 & 0.993806 & 0.98601 & 0.975321 & 0.962075 & 0.946517 \\
384 &  & 0.99844 & 0.993951 & 0.986335 & 0.975891 & 0.962945 & 0.947733 \\ \hline
\end{array}\)\\
{\tiny Note: For $1\leq k\leq8$, the values of $f_k(m)$ seem to approach $1$ as $m$ grows large.}
\end{center}
\end{table}

\begin{table}[H]
\caption{Values of $f_k(m)$ for $W(5,m)$}
\begin{center}
\(\begin{array}{rllllllll} \hline
\multicolumn{1}{c}{} & \multicolumn{1}{c}{k} & \multicolumn{1}{c}{2} & \multicolumn{1}{c}{3} & \multicolumn{1}{c}{4} & \multicolumn{1}{c}{5} & \multicolumn{1}{c}{6} & \multicolumn{1}{c}{7} & \multicolumn{1}{c}{8} \\
m &  &  &  &  &  &  &  &\\ \hline

%6 &  & 1 & 1.01389 & 1.04861 & 1.10725 & 1.18996 & 1.29053 & 1.40038 \\
10 &  & 1 & 1.005 & 1.0185 & 1.04303 & 1.08045 & 1.13219 & 1.19931 \\
14 &  & 1 & 1.00255 & 1.00966 & 1.02291 & 1.04362 & 1.07289 & 1.11169 \\
22 &  & 1 & 1.00103 & 1.00399 & 1.00965 & 1.01868 & 1.03171 & 1.04928 \\
28 &  & 1 & 1.00064 & 1.00248 & 1.00604 & 1.01178 & 1.02012 & 1.03144 \\

32 &  & 1 & 1.00049 & 1.00191 & 1.00466 & 1.00911 & 1.0156 & 1.02445 \\
36 &  & 1 & 1.00039 & 1.00151 & 1.0037 & 1.00725 & 1.01245 & 1.01955 \\
48 &  & 1 & 1.00022 & 1.00085 & 1.0021 & 1.00414 & 1.00714 & 1.01127 \\
50 &  & 1 & 1.0002 & 1.00079 & 1.00194 & 1.00383 & 1.0066 & 1.01041 \\

56 &  & 1 & 1.00016 & 1.00063 & 1.00155 & 1.00306 & 1.00529 & 1.00837 \\
60 &  & 1 & 1.00014 & 1.00055 & 1.00135 & 1.00268 & 1.00463 & 1.00732 \\
68 &  & 1 & 1.00011 & 1.00043 & 1.00106 & 1.00209 & 1.00362 & 1.00574 \\
74 &  & 1 & 1.00009 & 1.00036 & 1.00089 & 1.00177 & 1.00307 & 1.00487 \\

80 &  & 1 & 1.00008 & 1.00031 & 1.00077 & 1.00152 & 1.00263 & 1.00418 \\
92 &  & 1 & 1.00006 & 1.00023 & 1.00058 & 1.00115 & 1.002 & 1.00318 \\
96 &  & 1 & 1.00005 & 1.00022 & 1.00053 & 1.00106 & 1.00184 & 1.00292 \\
104 &  & 1 & 1.00005 & 1.00018 & 1.00046 & 1.0009 & 1.00157 & 1.0025 \\
110 &  & 1 & 1.00004 & 1.00016 & 1.00041 & 1.00081 & 1.00141 & 1.00224 \\ \hline
\end{array}\)\\
{\tiny Note: For $1\leq k\leq8$, the values of $f_k(m)$ seem to approach $1$ as $m$ grows large.}
\end{center}
\end{table}

\begin{table}[H]
\caption{Values of $f_k(m)$ for $W(6,m)$}
\label{tb2}
\begin{center}
\(\begin{array}{rlllllll} \hline
\multicolumn{1}{c}{} & \multicolumn{1}{c}{k} & \multicolumn{1}{c}{2} & \multicolumn{1}{c}{3} & \multicolumn{1}{c}{4} & \multicolumn{1}{c}{5} & \multicolumn{1}{c}{6} & \multicolumn{1}{c}{7} \\
m &  &  &  &  &  &  & \\ \hline

%4 &  & 0.990385 & 0.948214 & 0.894974 & 0.831392 & 0.764305 & 0.694829 \\
5 &  & 0.990769 & 0.961371 & 0.920808 & 0.875241 & 0.825431 & 0.771134 \\
6 &  & 0.991453 & 0.963492 & 0.923062 & 0.876936 & 0.828974 & 0.779769 \\
7 &  & 0.992151 & 0.966097 & 0.926315 & 0.879296 & 0.829744 & 0.779752 \\

10 &  & 0.993846 & 0.973029 & 0.938623 & 0.895117 & 0.847111 & 0.79782 \\
12 &  & 0.994658 & 0.976488 & 0.945662 & 0.905692 & 0.860667 & 0.813796 \\
14 &  & 0.99529 & 0.979217 & 0.951466 & 0.914841 & 0.872928 & 0.828755 \\
17 &  & 0.996007 & 0.982338 & 0.958319 & 0.926024 & 0.888408 & 0.848154 \\

20 &  & 0.996538 & 0.984664 & 0.963551 & 0.934791 & 0.90086 & 0.864109 \\
22 &  & 0.996821 & 0.985907 & 0.966386 & 0.939616 & 0.907818 & 0.873149 \\
27 &  & 0.997362 & 0.98829 & 0.971887 & 0.949116 & 0.921716 & 0.891453 \\

31 &  & 0.997679 & 0.989689 & 0.975157 & 0.95484 & 0.930211 & 0.902796 \\
35 &  & 0.997928 & 0.990791 & 0.977751 & 0.959422 & 0.937072 & 0.912041 \\
36 &  & 0.997982 & 0.991031 & 0.978318 & 0.960428 & 0.938585 & 0.914089 \\

40 &  & 0.998173 & 0.991878 & 0.980325 & 0.964 & 0.943981 & 0.921423 \\
43 &  & 0.998294 & 0.992415 & 0.981603 & 0.966287 & 0.94745 & 0.926161 \\
47 &  & 0.998433 & 0.99303 & 0.98307 & 0.968921 & 0.951463 & 0.931664 \\
50 &  & 0.998523 & 0.99343 & 0.984027 & 0.970643 & 0.954096 & 0.935286 \\ \hline
\end{array}\)\\
{\tiny Note: For $1\leq k\leq8$, the values of $f_k(m)$ seem to approach $1$ as $m$ grows large.}
\end{center}
\end{table}

\begin{table}[H]
\caption{Deviations of the normalized ranks for $W(3,m)$ from the density function}
\label{tb3}
\begin{center}
\(\begin{array}{rrllll} \hline
m & \multicolumn{1}{c}{\text{Total rank}} & \multicolumn{1}{c}{\mu} & \multicolumn{1}{c}{\sigma} & \multicolumn{1}{c}{L^1\text{-deviation}} & \multicolumn{1}{c}{L^2\text{-deviation}} \\[2mm] \hline

%5 & 61. & 0.491803 & 2.06154 & 0.0927555 & 0.0388521 \\
47 & 2.20703\times 10^{19} & 0.5 & 6.46437 & 0.00747521 & 0.00141888 \\
61 & 1.56842\times 10^{25} & 0.5 & 7.36954 & 0.00567737 & 0.00101715 \\

89 & 7.92082\times 10^{36} & 0.5 & 8.90809 & 0.00387283 & 0.000629711 \\
103 & 5.6289\times 10^{42} & 0.5 & 9.5852 & 0.0033449 & 0.000523479 \\
131 & 2.8427\times 10^{54} & 0.5 & 10.813 & 0.00262246 & 0.000386449 \\

173 & 1.02022\times 10^{72} & 0.5 & 12.4292 & 0.00197886 & 0.000272269 \\
215 & 3.66147\times 10^{89} & 0.5 & 13.8583 & 0.00158861 & 0.000207168 \\
229 & 2.60201\times 10^{95} & 0.5 & 14.3029 & 0.00149012 & 0.000191383 \\

257 & 1.31407\times 10^{107} & 0.5 & 15.1531 & 0.00132765 & 0.000165575 \\
271 & 9.33837\times 10^{112} & 0.5 & 15.5608 & 0.00125766 & 0.000154909 \\
313 & 3.35145\times 10^{130} & 0.5 & 16.7244 & 0.00108851 & 0.000129288 \\

355 & 1.2028\times 10^{148} & 0.5 & 17.8121 & 0.000959583 & 0.000110402 \\
383 & 6.07439\times 10^{159} & 0.5 & 18.5018 & 0.000889239 & 0.000100378 \\
397 & 4.31674\times 10^{165} & 0.5 & 18.8371 & 0.000858056 & 0.0000959607 \\

425 & 2.18004\times 10^{177} & 0.5 & 19.4906 & 0.000800354 & 0.0000881039 \\
439 & 1.54924\times 10^{183} & 0.5 & 19.8092 & 0.000775335 & 0.0000845971 \\
446 & 1.30601\times 10^{186} & 0.5 & 19.9666 & 0.000763206 & 0.0000829365 \\ \hline
\end{array}\)\\
{\tiny Note: The values of both $L^1$ and $L^2\text{-deviations}$ seem to approach $0$ as $m$ grows large.}
\end{center}
\end{table}

\begin{table}[H]
\caption{Deviations of the normalized ranks for $W(4,m)$ from the density function}
\begin{center}
\(\begin{array}{rrllll} \hline
m & \multicolumn{1}{c}{\text{Total rank}} & \multicolumn{1}{c}{\mu} & \multicolumn{1}{c}{\sigma} & \multicolumn{1}{c}{L^1\text{-deviation}} & \multicolumn{1}{c}{L^2\text{-deviation}} \\[2mm] \hline

%7 & 17644. & 3.33003 & 2.90232 & 0.0435629 & 0.0126719 \\
13 & 8.85526\times 10^7 & 4.96218 & 4.10061 & 0.0632557 & 0.01523 \\
31 & 4.16558\times 10^{18} & 9.55151 & 7.16703 & 0.0896461 & 0.0170228 \\
49 & 1.29887\times 10^{29} & 14.0752 & 9.97569 & 0.101445 & 0.016681 \\

67 & 3.50349\times 10^{39} & 18.5862 & 12.6958 & 0.112183 & 0.0166565 \\
85 & 8.768\times 10^{49} & 23.0925 & 15.374 & 0.124524 & 0.0168477 \\
91 & 2.53634\times 10^{53} & 24.594 & 16.2609 & 0.128625 & 0.0169273 \\
109 & 5.99307\times 10^{63} & 29.0977 & 18.9103 & 0.140952 & 0.0171587 \\

133 & 3.89776\times 10^{77} & 35.101 & 22.4241 & 0.156364 & 0.0173918 \\
157 & 2.45248\times 10^{91} & 41.1033 & 25.9246 & 0.170129 & 0.0175175 \\
175 & 5.39261\times 10^{101} & 45.6046 & 28.5442 & 0.179508 & 0.0175495 \\

187 & 4.207\times 10^{108} & 48.6053 & 30.2885 & 0.185228 & 0.0175464 \\
205 & 9.0979\times 10^{118} & 53.1063 & 32.9026 & 0.193235 & 0.017512 \\
235 & 1.50204\times 10^{136} & 60.6075 & 37.2544 & 0.205074 & 0.0173945 \\

241 & 4.16213\times 10^{139} & 62.1077 & 38.1241 & 0.207211 & 0.0173642 \\
259 & 8.82378\times 10^{149} & 66.6083 & 40.7325 & 0.213363 & 0.0172627 \\
277 & 1.86162\times 10^{160} & 71.1088 & 43.3396 & 0.218994 & 0.0171489 \\
295 & 3.91101\times 10^{170} & 75.6092 & 45.9457 & 0.224201 & 0.0170261 \\ \hline
\end{array}\)
\end{center}
\end{table}

\begin{table}[H]
\caption{Deviations of the normalized ranks for $W(5,m)$ from the density function}
\begin{center}
\(\begin{array}{rrllll} \hline
m & \multicolumn{1}{c}{\text{Total rank}} & \multicolumn{1}{c}{\mu} & \multicolumn{1}{c}{\sigma} & \multicolumn{1}{c}{L^1\text{-deviation}} & \multicolumn{1}{c}{L^2\text{-deviation}} \\[2mm] \hline

%3 & 181. & 0.497238 & 2.22647 & 0.0865806 & 0.0270373 \\
9 & 7.18681\times 10^7 & 0.5 & 3.60626 & 0.0125854 & 0.00324384 \\
11 & 4.70654\times 10^9 & 0.5 & 3.98674 & 0.00953972 & 0.0022939 \\
17 & 1.30183\times 10^{15} & 0.5 & 4.96621 & 0.00576828 & 0.00125091 \\

21 & 5.518\times 10^{18} & 0.5 & 5.52486 & 0.00467484 & 0.000957202 \\
33 & 4.20138\times 10^{29} & 0.5 & 6.93607 & 0.00295785 & 0.000542858 \\
39 & 1.1593\times 10^{35} & 0.5 & 7.54332 & 0.00249014 & 0.000440293 \\

43 & 4.91358\times 10^{38} & 0.5 & 7.92233 & 0.00226405 & 0.000389583 \\
47 & 2.08258\times 10^{42} & 0.5 & 8.28402 & 0.00207088 & 0.000348495 \\
51 & 8.8268\times 10^{45} & 0.5 & 8.63057 & 0.00190276 & 0.000314598 \\

61 & 1.03231\times 10^{55} & 0.5 & 9.44145 & 0.00159543 & 0.000251396 \\
69 & 1.85445\times 10^{62} & 0.5 & 10.0431 & 0.00140353 & 0.000215446 \\
77 & 3.33135\times 10^{69} & 0.5 & 10.6107 & 0.00126212 & 0.000187797 \\

83 & 9.1923\times 10^{74} & 0.5 & 11.0173 & 0.00117013 & 0.00017096 \\
89 & 2.53646\times 10^{80} & 0.5 & 11.4093 & 0.00108826 & 0.000156657 \\
97 & 4.55652\times 10^{87} & 0.5 & 11.912 & 0.00100104 & 0.000140657 \\

103 & 1.2573\times 10^{93} & 0.5 & 12.2755 & 0.000942445 & 0.000130478 \\
109 & 3.4693\times 10^{98} & 0.5 & 12.6286 & 0.000888943 & 0.000121553 \\ \hline
\end{array}\)\\
{\tiny Note: The values of both $L^1$ and $L^2\text{-deviations}$ seem to approach $0$ as $m$ grows large.}
\end{center}
\end{table}

\begin{table}[H]
\caption{Deviations of the normalized ranks for $W(6,m)$ from the density function}
\label{tb4}
\begin{center}
\(\begin{array}{rrllll} \hline
m & \multicolumn{1}{c}{\text{Total rank}} & \multicolumn{1}{c}{\mu} & \multicolumn{1}{c}{\sigma} & \multicolumn{1}{c}{L^1\text{-deviation}} & \multicolumn{1}{c}{L^2\text{-deviation}} \\[2mm] \hline

5 & \multicolumn{1}{c}{254403} & 2.66802 & 2.99802 & 0.0264457 & 0.00759375 \\
7 & 5.68714\times 10^7 & 3.2106 & 3.47257 & 0.0220524 & 0.00579802 \\
11 & 2.2178\times 10^{12} & 4.11453 & 4.32385 & 0.0303306 & 0.00702358 \\
13 & 4.12427\times 10^{14} & 4.53041 & 4.71055 & 0.0337645 & 0.00747458 \\
17 & 1.33526\times 10^{19} & 5.33251 & 5.4275 & 0.0385126 & 0.00800467 \\
19 & 2.34815\times 10^{21} & 5.72499 & 5.76377 & 0.0403142 & 0.00816211 \\
23 & 7.03737\times 10^{25} & 6.50024 & 6.40244 & 0.043491 & 0.00836267 \\
25 & 1.20359\times 10^{28} & 6.88457 & 6.70782 & 0.0446933 & 0.00842603 \\
29 & 3.45657\times 10^{32} & 7.64892 & 7.29618 & 0.0470285 & 0.00850699 \\
31 & 5.81386\times 10^{34} & 8.02949 & 7.58081 & 0.0479072 & 0.00853115 \\
35 & 1.6251\times 10^{39} & 8.78834 & 8.13422 & 0.0498128 & 0.0085572 \\
37 & 2.70315\times 10^{41} & 9.16687 & 8.404 & 0.0504798 & 0.00856184 \\
41 & 7.41587\times 10^{45} & 9.92257 & 8.93174 & 0.0519877 & 0.00855916 \\
43 & 1.22377\times 10^{48} & 10.2999 & 9.19037 & 0.052611 & 0.00855324 \\
47 & 3.31162\times 10^{52} & 11.0536 & 9.6985 & 0.0538845 & 0.0085348 \\
49 & 5.43243\times 10^{54} & 11.4301 & 9.94846 & 0.0544588 & 0.00852309 \\ \hline
\end{array}\)
\end{center}
\end{table}

\end{document}